\crefname{hypothesis}{Hypothesis}{Hypotheses}
\begin{document}
% Sets running headers as well as PDF title and authors
\headers{MMC homogenization method for random material}{Zihao Yang, Jizu Huang, Xiaobing Feng, Xiaofei Guan}

% Title. If the supplement option is on, then "Supplementary Material"
% is automatically inserted before the title.
\title{An efficient multi-modes Monte Carlo homogenization method for random materials
	%\thanks{Submitted to the editors DATE.
}
%\funding{This work was supported in part by National Natural Science Foundation of China
%(grant 11871069), and the National Key R\&D Program of China (No.2020YFA0713603).}}}

%\title{An efficient numerical method for stochastic homogenization with applications to random materials\thanks{Submitted to the editors DATE.
%\funding{This work was supported in part by National Natural Science Foundation of China
%(grant 11871069), and the National Key R\&D Program of China (No.2020YFA0713603).}}}

% Authors: full names plus addresses.
\author{Zihao Yang\thanks{School of Mathematics and Statistics, Northwestern Polytechnical University, Xi'an 710072, China (\email{yangzihao@nwpu.edu.cn}). The work of this author was partially supported by
			the National Natural Science Foundation of China grant 11871069 and by the National Key R\&D Program of China with the grant 2020YFA0713603.}
	\and Jizu Huang\thanks{Corresponding author. LSEC, Academy of Mathematics and Systems Science, Chinese Academy of Sciences, Beijing 100190, China, and School of Mathematical Sciences, University of Chinese Academy of Sciences, Beijing 100049, China
		(\email{huangjz@lsec.cc.ac.cn}).}
	\and Xiaobing Feng\thanks{Department of Mathematics, The University of Tennessee, TN 37996, USA   (\email{xfeng@math.utk.edu}).}
	\and Xiaofei Guan\thanks{School of Mathematical Sciences, Tongji University, Shanghai 200092, China   (\email{guanxf@tongji.edu.cn}).}
}

\maketitle

% REQUIRED
\begin{abstract}
In this paper, we propose and analyze a new stochastic homogenization method for diffusion equations
with random and fast oscillatory coefficients. In the proposed method, the homogenized solutions are
sought through a two-stage procedure. In the first stage, the original oscillatory diffusion equation is approximated, for each fixed random sample $\omega$, by a spatially homogenized diffusion equation with
piecewise constant coefficients, {\color{black}resulting in} a random diffusion equation.
 In the second stage,
the {\color{black}resulting} random diffusion equation is approximated and computed by using an efficient
multi-modes Monte Carlo method  which only requires to solve a  diffusion equation
with a constant diffusion coefficient and a random right-hand side.
The main advantage of the proposed method is that it separates the
computational difficulty caused by the spatial fast oscillation of the solution and that caused by the randomness of the solution, so they can be overcome separately using different strategies.
The convergence of the solution of the spatially homogenized equation (from the first stage) to the solution of the original random diffusion equation is established and the optimal rate of convergence is also obtained for the proposed multi-modes Monte Carlo method.
Numerical experiments on some benchmark test problems for random composite materials are also
presented to gauge the efficiency and accuracy of the proposed two-stage stochastic homogenization method.
\end{abstract}

% REQUIRED
\begin{keywords}
Stochastic homogenization, multi-modes Monte Carlo method, finite element method, convergence and
error estimates, random composite materials.
\end{keywords}

% REQUIRED
\begin{AMS}
65M12, 74Q05
\end{AMS}

%%%%%%%%%
	\section{Introduction}\label{sec-intro}
	This paper is concerned with numerical solutions of the following diffusion equation
	with random coefficients and data encountered in materials science:
	\begin{subequations}\label{eq:problem}
		\begin{alignat}{2}
		-\hbox{div}\Bigl(A \biggl(\frac{x}{\varepsilon},\omega\biggr)\nabla u^\varepsilon(x,\omega) \Bigr)&=f(x,\omega)  &&\qquad\mbox{in } {D}\times \Omega, \\
		u^\varepsilon(x,\omega) &= 0 &&\qquad\mbox{on } \partial {D}\times \Omega.
		\end{alignat}
	\end{subequations}
	Here ${D} \subset \mathbb{R}^d (d=1,2,3)$ is a bounded domain and $\omega$ denotes a sample point
	which belongs to a probability (sample) space $(\Omega, \mathcal{F}, \mathbb{P})$.
	The coefficient matrix $A(\frac x \varepsilon,\omega)=(a_{ij}(\frac x \varepsilon,\omega))_{1\leq i,j \leq d}$ and the right hand side term $f(x,\omega)$ are random fields with continuous and bounded covariance
	functions.
   {The parameter $\varepsilon$ represents the size of microstructure for the composite materials, which is usually very small, that is,  $0< \varepsilon \ll 1$.}
	%For the ease of presentation, we set $f(x,\omega)=f(x)$, that is, $f$ is
%	a deterministic function.

	The random diffusion equation \eqref{eq:problem} has many applications in mechanics, hydrology and thermics (see \cite{graham2011quasi, 2014Efficient, suribhatla2011effective,  to2020fft}).
	A direct accurate numerical solution of \eqref{eq:problem} is difficult to obtain because it
	requires a very fine mesh and large-scale sampling of $\omega$, and thus a prohibitive amount
	of computation time. In the case of absence of the randomness (i.e., the dependence on $\omega$ in \eqref{eq:problem} is dropped ), the homogenization method has been successfully developed for solving the diffusion equation with periodic deterministic coefficients
	(cf. \cite{bensoussan2011asymptotic, cioranescu1999introduction, tartar2009general}),
	in which the homogenized coefficients are obtained by solving a cell problem defined in the unit cell.
	For the diffusion equation \eqref{eq:problem} with  random  coefficients,
	a stochastic homogenization theory has also been well developed, see
	%there are many theoretical and computational methods associated with stochastic homogenization, such as
	\cite{blanc2007stochastic,
		bourgeat2004approximations, duerinckx2016structure, cui2005multi,
		gloria2012optimal, li2005multi, kozlov1979averaging, papanicolaou1979boundary, sab1992homogenization, vel2010multiscale, wu2016Efficient}.
	%anantharaman2011numerical, anantharaman2012elements,
	Similar to the deterministic case, the homogenized diffusion equation is constructed by
	solving a certain cell problem. However, a fundamental difference is that the stochastic
	cell problem is a random second order elliptic problem, which is posed in the
	whole space $\mathbb{R}^d$ (see \eqref{eq:cell-problem}).  Solving such an infinite domain problem
	numerically is not only challenging but also very expensive, and the homogenization methods quoted above did not give any practical recipe for numerically approximating the cell problem and the homogenized equation.

	To circumvent the above difficulties, some localized approximations of the effective (or homogenized) coefficients by using ``periodization" and  ``cut-off" procedures were introduced in
	\cite{anantharaman2012introduction, bourgeat2004approximations,
		pozhidaev1990error, yurinskii1986averaging}.
	A big benefit of the localized approximations is that the resulting cell problem is now posed
	on a bounded domain and it was proved that the approximated coefficients
	converge to the effective (or homogenized) coefficients as the size of the bounded domain goes to infinity.
	We also note that the localized approximation methods, such as the representative volume element (RVE) method, have also been used to compute the effective parameters of highly heterogeneous materials and to calculate
	the effective coefficients related to random composite materials, by utilizing a possibly large number of realizations \cite{cui2005multi, gitman2007representative,li2005multi,  xu2009stochastic}.
	After having constructed the approximated effective (or homogenized) coefficients, the main task then reduces  to solve the approximated (random) diffusion equation with the constructed coefficients.
    {As a direct application of Monte Carlo or stochastic Galerkin method, since solving this random diffusion equation is computationally expensive, other more efficient methods have been
	developed for the job.} In \cite{anantharaman2011numerical, anantharaman2012elements},
	a perturbative model for weakly random materials was proposed and the first-order and second-order
	asymptotic expansions were established by means of an ergodic approximation based on the weak randomness
	assumption. {\color{black}In \cite{feng2016multimodes},} a multi-modes Monte Carlo (MMC) finite element method was
	proposed to solve random {\color{black}partial differential equations (PDEs)} under the assumption that the medias (or coefficients) are weakly random
	in the sense that they can be expressed as small random perturbations of a deterministic background.
	However, to the best of our knowledge, there is still no efficient method for solving the homogenized
	equation for problem \eqref{eq:problem} in general setting.

	The goal of this paper is to develop and analyze an efficient and practical two-stage
	homogenization method for problem \eqref{eq:problem}.  In the first stage, we construct
	a piecewise homogeneous (i.e., piecewise constant) material as an approximation to the
	original composite material for each fixed sample $\omega$ by solving several cell functions
	on bounded domains. Under the stationary (process) assumption, we are able to prove that the coefficient matrix
	of the piecewise homogeneous material can be rewritten as a small random perturbation of a
	deterministic matrix, which then sets the stage for us to adapt the MMC framework.
	In the second stage, we utilize the MMC finite element method to solve
	random diffusion problem with the piecewise coefficients obtained from the first stage and
	provide a complete convergence analysis for the MMC method.
	Since the first stage of the proposed method is similar to the RVE method, the work of this
	paper can be regarded as a mathematical interpretation and justification for the RVE method
	for problem \eqref{eq:problem}
	and introduces a  numerical framework for an efficient implementation of the RVE method.
	
	The rest of the paper is organized as follows. In Section \ref{sec-pre}, we present a few preliminaries including notations and assumptions.
	In Section \ref{sec-method},  we introduce our two-stage stochastic homogenization method and characterize the piecewise constant approximate coefficients.
	In Section \ref{sec:convergence}, we present the convergence analysis of the proposed two-stage
	stochastic homogenization  method under the stationary assumption on the diffusion coefficients.
	In Section \ref{sec-alg}, we propose a finite element discretization and a detailed
	implementation algorithm for the proposed method.
	In Section \ref{sec:experiments}, we present several benchmark numerical experiments to demonstrate the
	efficiency of the proposed method and to validate the theoretical results.
	Finally, the paper is completed with some concluding remarks given in Section \ref{sec-con}.

%%%%%%%%%%%%%%
	\section{Preliminaries}\label{sec-pre}
	\subsection{Notations and assumptions}
	Standard notations will be adopted in paper. $(\Omega,\,{\cal F},\mathbb{P})$ denotes a probability space and
	${\mathbb E}(\mathbf{X}):=\int_\Omega \mathbf{X}(\omega) \textnormal d \mathbb{P}(\omega)$ stands for the expectation value of random variable $\mathbf{X}\in L^1(\Omega, \textnormal d \mathbb{P})$.
	Let $Q:=(0,1)^d$ be the unit cell and $Q+\mathbf{k} :=(k_0,k_0+1)\times(k_1,k_1+1)\times \cdots\times(k_d,k_d+1)$ for $\mathbf{k}=(k_0,k_1,\cdots,k_d)^T$ with $k_i\in\mathbb{Z}$.
	{\color{black}Let} ${D}\subset \mathbb{R}^d$ be a bounded domain which can be written as ${D}=\cup_{ \mathbf{k}\in \mathbb{Z}^d} {D}_{\mathbf{k}}$,
	where ${D}_{\mathbf{k}}={D}\cap \varepsilon (Q+\mathbf{k})$. For a positive integer
	$M\in \mathbb{Z}^+$, set ${\cal Q}_M:=M Q=(0,M)^d$
	and ${\cal Q}_M^{\mathbf{k}}=MQ+M\mathbf{k}$.  %Similar to $H$-convergence,
	Let ${\cal M}(\alpha,\beta; {D})$
	denote the set of invertible real-valued $d\times d$ matrices $ {A}= {A}(\cdot, \omega)$ with entries in $L^\infty ({D})$ and satisfying $\mathbb{P}$-a.s
	\begin{equation}
	\alpha |\xi|^2\leq ( {A}\xi,\xi)\leq \beta  |\xi|^2\qquad\mbox{for any $\xi\in \mathbb{R}^d$ and a.e. 	in ${D}$}.
	\end{equation}
	Here $(\cdot,\cdot)$ denotes the standard inner product in $\mathbb{R}^d$ and $|\xi|^2=(\xi,\xi)$.
	
	{\color{black} Similar to \cite{anantharaman2012introduction, anantharaman2011numerical}}, we also assume that $A(\frac x \varepsilon, \omega)\in {\cal M}(\alpha,\beta; {D})$ is stationary
	in the sense that 
	\begin{equation}\label{stationary}
	A\Bigl(\frac{x}\varepsilon +\mathbf{k},\omega\Bigr)= A\Bigl(\frac{x}\varepsilon , \tau_{\mathbf k} \omega \Bigr)
	\qquad \mbox{for any $\mathbf{k}\in \mathbb{Z}^d$, a.e. in ${D}$ and $\mathbb{P}$-a.s.},
	\end{equation}
	where $\tau_{\mathbf k}\color{black}$ is a mapping which is ergodic and preserves the measure $\mathbb{P}$, that is
	\begin{equation}\label{ergodic}
	 \tau_{\mathbf k}\color{black} E=E \quad\forall E\in \mathcal{F} \quad \mbox{implies that}\quad  \mathbb{P}(E)=0~ \mbox{or} ~1.
	\end{equation}

	For the ease of presentation, we set $f(x,\omega)=f(x)$ in the rest of the paper, that is, $f$ is a deterministic function.
	
	%%%%
	\subsection{Elements of the classical stochastic homogenization theory}\label{sec-2.2}
	It is well known that \cite{anantharaman2012introduction, blanc2007stochastic,
		bourgeat2004approximations, kozlov1979averaging, papanicolaou1979boundary, sab1992homogenization} as $\varepsilon\rightarrow 0$, the solution $u^\varepsilon(x,\omega)$ of equation \eqref{eq:problem} converges to the solution of the following homogenized problem:
	\begin{subequations}\label{eq:homo-problem}
		\begin{alignat}{2}
		-\textnormal{div}\bigl(A^\star\nabla u^\star(x) \bigr)&= f(x) \color{black}&&\qquad \mbox{in }   D, \\
		u^\star(x) &= 0  &&\qquad \mbox{on }\partial   D,
		\end{alignat}
	\end{subequations}
	where   the  $(i,\,j)$ entry \color{black} of the homogenized matrix (or effective coefficient) $A^\star=(a_{ij}^\star)_{d\times d}$ is defined by
	\begin{equation}\label{eq:home-matrix}
	a_{ij}^\star=\mathbb{E}\Big(\int_Q(e_i+\nabla \mathbb{N}_{e_i}(y,\omega))^TA(y,\omega)e_j\textnormal d y\Big).
	\end{equation}
	$\{e_i\}_{i=1}^d$ denotes the canonical basis of $\mathbb{R}^d$ and the cell function $\mathbb{N}_{e_i}(y,\omega)$ is defined as the solution of the following cell problem:
	\begin{subequations}\label{eq:cell-problem}
		\begin{alignat}{2}
		-\textnormal{div}\bigl[A(y,\omega)(e_i+\nabla \mathbb{N}_{e_i}(y,\omega))\bigr] &=0 \qquad \mbox{in}~\mathbb{R}^d, \\
		\mathbb{E}\Big(\int_Q\nabla \mathbb{N}_{e_i}(y,\cdot)\textnormal{d}y\Big) &=0, \\
		\nabla \mathbb{N}_{e_i}(y,\omega)~\textnormal{is~stationary~in}&\textnormal{~the~sense~of~\eqref{stationary}}.
		\end{alignat}
	\end{subequations}
	
	As shown above, the classical stochastic homogenization method obtains the homogenized solution $u^\star(x)$ in one step, {\color{black}see \cref{fig:Fig1}} for a schematic explanation.
	%By solving  the cell problem \eqref{eq:cell-problem}, one can derive a homogenized matrix $A^\star$, which is independent of  $\varepsilon$ and  $\omega$.
	We note that the cell problem \eqref{eq:cell-problem} is random elliptic problem which is  posed on the whole space $\mathbb{R}^d$ and can not be reduced to a cell problem on a bounded domain due to the global constraint $\mathbb{E}\big(\int_Q\nabla  \mathbb{N}_{e_i}(y,\cdot)\textnormal{d}y\big)=0$.
	Solving problem \eqref{eq:cell-problem} is the main computational challenge for
	implementing the classical stochastic homogenization method.  A natural and widely
	used approach (cf. \cite{anantharaman2012introduction,gloria2012optimal}) is to approximate $\mathbb{R}^d$ by a truncated cubic domain ${\cal Q}_{{N}}\subset \mathbb{R}^d$ with size $N^d$ by using ``periodization"
	and ``cut-off" techniques and then to solve the truncated problem
	\begin{subequations}\label{eq:cell-problem-truncated}
		\begin{alignat}{2}
		-\textnormal{div}\bigl[A(y,\omega)(e_i+\nabla \mathbb{N}_{e_i,N}(y,\omega))\bigr] =0 & &&\qquad\textnormal{in } \mathcal{Q}_N, \\
		\mathbb{N}_{e_i,N}(y,\omega)~\hbox{is}~{\cal Q}_N\hbox{-periodic}. & &&
		\end{alignat}
	\end{subequations}
	Consequently, the deterministic homogenized coefficient matrix $A^\star$ can be practically approximated by a random matrix
	$A^\star_N= (a_{ij,N}^\star(\omega))_{d\times d}\color{black}$ whose $(i,j)$ entry is defined as
	\begin{equation}\label{eq:approx-homo-coeff}
	 a_{ij,N}^\star(\omega)\color{black}=\frac{1}{|{\cal Q}_N|}\Big(\int_{{\cal Q}_N}(e_i+\nabla \mathbb{N}_{e_i,N}(y,\omega))^TA(y,\omega)(e_j+\nabla \mathbb{N}_{e_j,N}(y,\omega))\textnormal d y\Big).
	\end{equation}
	Then, the solution $u^*$ of problem \eqref{eq:homo-problem} is approximated as
	$\mathbb{E}[u^*_N(\omega)]$ with $u^*_N(\omega)$ being the solution of the following equation
	\begin{subequations}\label{eq:homo-problem_approx}
		\begin{alignat}{2}
		-\textnormal{div}\bigl(A^\star_N(\omega)\nabla u^\star_N(x,\omega) \bigr)&=f(x)  &&\qquad \mbox{in } D, \\
		u^\star_N(x,\omega) &= 0  &&\qquad \mbox{on }\partial D.
		\end{alignat}
	\end{subequations}
	
	We refer the reader to \cite{anantharaman2012introduction,anantharaman2012elements,gloria2012optimal} for
	a detailed account about the above classical numerical approach.

	%%%%%%%%%
	\section{ Two-stage stochastic homogenization method}\label{sec-method}
	\label{sec:method}
	{\color{black}In this section,} we shall present a detailed formulation of our two-stage stochastic homogenization method for problem \eqref{eq:problem}.
	The new method only needs to solve similar diffusion equations with constant
	diffusion coefficients {and random right-hand sides}.
		
	\subsection{Formulation of the two-stage stochastic homogenization method}
	As explained earlier, the main difficulty for solving problem \eqref{eq:problem}
	is due to the oscillatory nature of its solution which is caused by the
	oscillatory coefficient matrix $A$ of the problem. Recall that the classical
	numerical homogenization methods approximate effective (or homogenized) coefficient matrix $A^*$ by matrix $A^*_N$ which is formed by solving the cell problem \eqref{eq:cell-problem-truncated}, which is often expensive to solve numerically.
	Motivated by this difficulty, the main idea of our method is
	to propose a different procedure to construct an approximation to $A^*$
	(in the first stage) whose corresponding homogenized problem  can be solved efficiently (in the second stage).
	
	Specifically, our proposed method aims to construct a homogenized solution
	$u_0^0(x)$ by the following two stages as illustrated in \cref{fig:Fig1}.
	In the first stage, for each given sample $\omega$, the composite material with micro-structure is equivalently transformed to a piecewise homogeneous material with coefficient matrix
	$\hat A(x,\omega)=\hat{A}^\mathbf{k}(\omega)=\big(\hat a_{ij}^\mathbf{k}(\omega)\big)$ (see  \cref{Fig2}), {\color{black}referred to as the {\em equivalent matrix} in each block $ {D}\cap\varepsilon{\cal Q}_M^{\mathbf k}$}, where
	\begin{equation}\label{eq:homo-matrix-1}
	\hat a_{ij}^\mathbf{k}(\omega)=\frac1 {|{\cal Q}_M|}\int_{{\cal Q}_M}(e_i+\nabla \mathbb{N}_{e_i}^\mathbf{k}(y,\omega))^TA(y+M\mathbf{k},\omega)(e_j+\nabla \mathbb{N}_{e_j}^\mathbf{k}(y,\omega))\textnormal d y,
	\end{equation}
	and the cell function $\mathbb{N}_{e_i}^\mathbf{k}(y,\omega)$ is defined   as the solution of  the following cell problem on block ${\cal Q}_M^{\mathbf{k}}$:
	%\footnote{{\color{red} This equation is modified.}}
	\begin{subequations}\label{newcellproblem}
		\begin{alignat}{2}
		 -\textnormal{div}\bigl[A(y,\omega)(e_i+\nabla \mathbb{N}_{e_i}^\mathbf{k}(y,\omega))\bigr] =0& &&\qquad \textnormal{~in}~{\cal Q}_M^{\mathbf{k}}, \\
		\mathbb{N}_{e_i}^\mathbf{k}(y,\omega)~\hbox{is}~{\cal Q}_M^{\mathbf{k}} \hbox{-periodic}. & &&
		\end{alignat}
	\end{subequations}
	Here $M$ is a parameter used to balance the efficiency and accuracy of the proposed method.
 	In numerical simulations, we usually choose $M={\cal O}(1)$.
	Notice that the  { equivalent matrix}  $\hat A(x,\omega)$ in each block ${\cal Q}_M^{\mathbf{k}}$ is a constant matrix.
	The {equivalent matrix} $\hat A(x,\omega)$ can be regarded as a
	coarse-grained approximation of the original matrix $A(\frac x\varepsilon,\omega)$.
	In the coarsening process, the equivalent material with coefficient matrix $\hat A(x,\omega)$ is homogeneous  in each block  ${\cal Q}_M^{\mathbf{k}}$,
	but still maintain the heterogeneity between different blocks
	(see \cref{Fig2}-(b)).
 	It should be pointed out that the equivalent matrix $\hat A(x,\omega)$
	is usually different from $A^\star_N$  obtained by ``periodization" procedure.
	In fact, by comparing cell problems \eqref{newcellproblem} and \eqref{eq:cell-problem-truncated} with $M=N$, it is easy to see that $A^\star_N$ is the same as $\hat A^0(\omega)$  and may be different from $\hat A^{\mathbf k}(\omega)$ for $\mathbf k\neq 0$ due to the possible
	heterogeneity between different blocks (recall that $\hat A^{\mathbf k}(\omega)$ denotes the equivalent matrix in block ${D}\cap\varepsilon {\cal Q}^{\mathbf k}_M$).

	\begin{figure}[t]
		\begin{center}
			\includegraphics[width=0.8\textwidth]{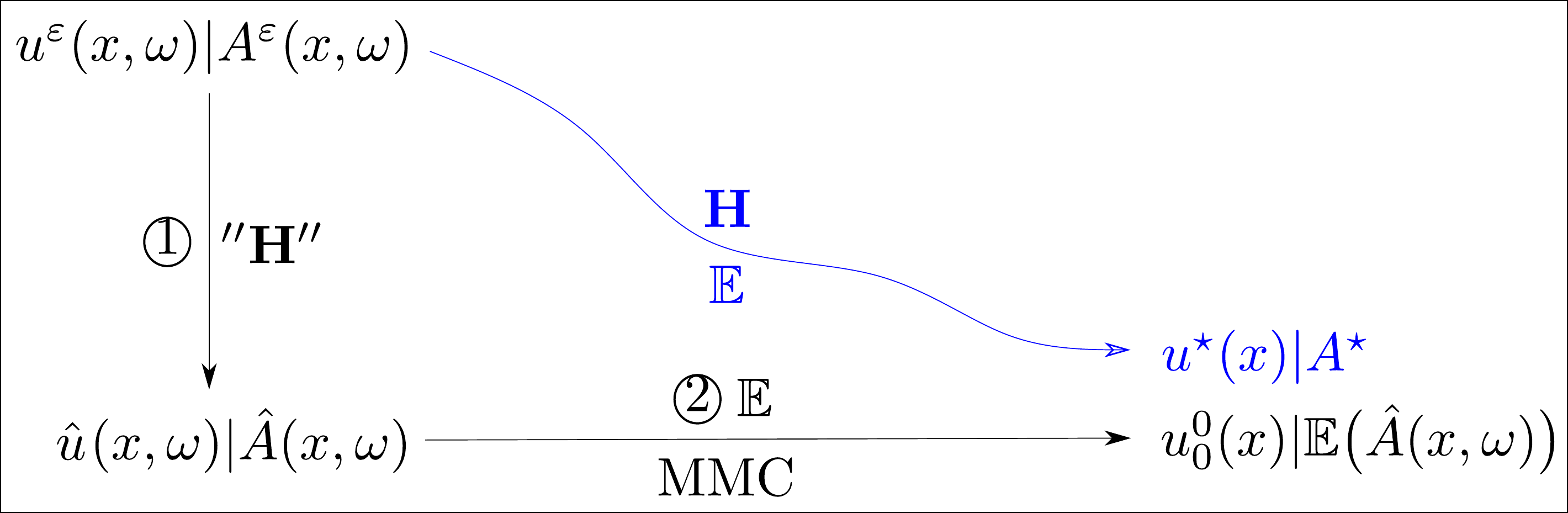}
		\end{center}
		\caption{A schematic diagram for the two-stage stochastic homogenization method.
			In the first stage, the composite material with coefficient matrix $A(\frac x \varepsilon,\omega)$
			is equivalent to a piecewise homogeneous random material with coefficient matrix $\hat A(x,\omega)$.
			In the second stage, the homogenized solution $u_0^0(x)$ is obtained by using the MMC method.
			As a comparison, the classical stochastic homogenization method aims to get the stochastic homogenized solution
			$u^\star(x)$ in one step.}\label{fig:Fig1}
	\end{figure}
	\begin{figure}[h]
		\centering
		{\tiny(a)}\includegraphics[width=0.28\textwidth]{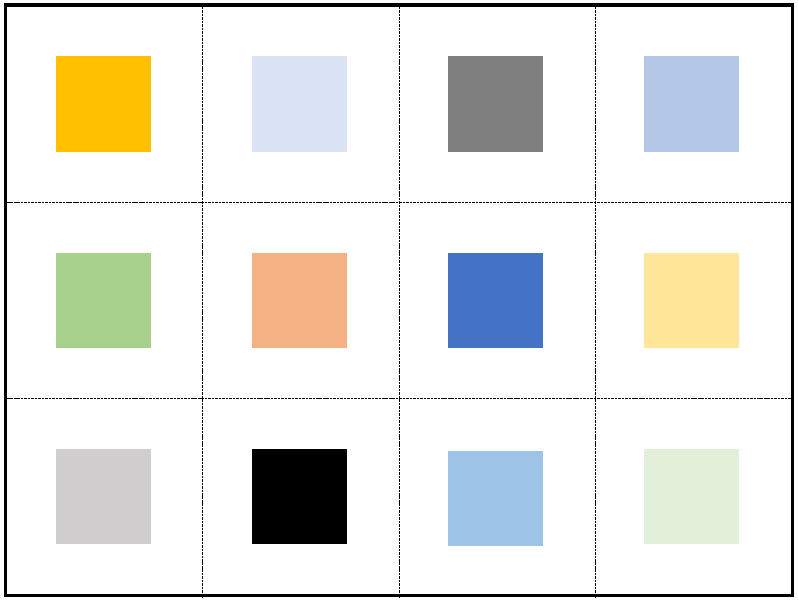}~
		{\tiny(b)}\includegraphics[width=0.28\textwidth]{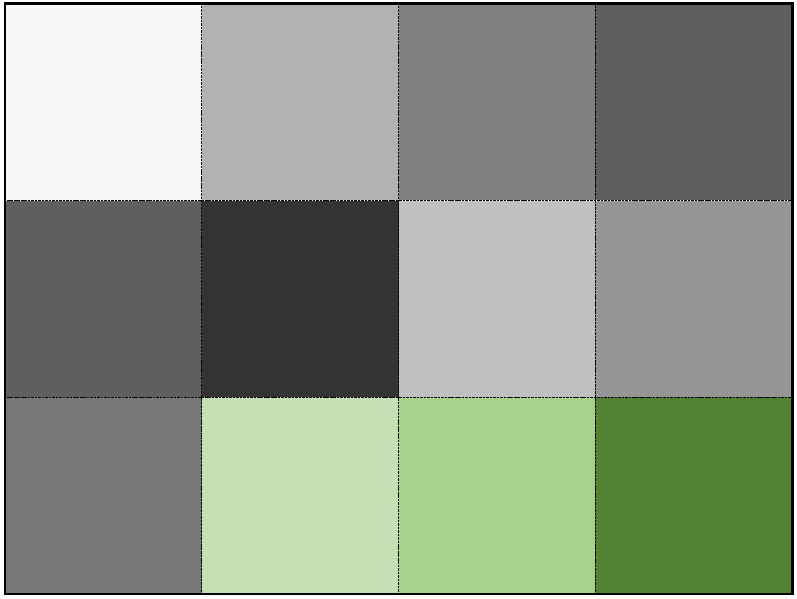}
		{\tiny(c)}\includegraphics[width=0.28\textwidth]{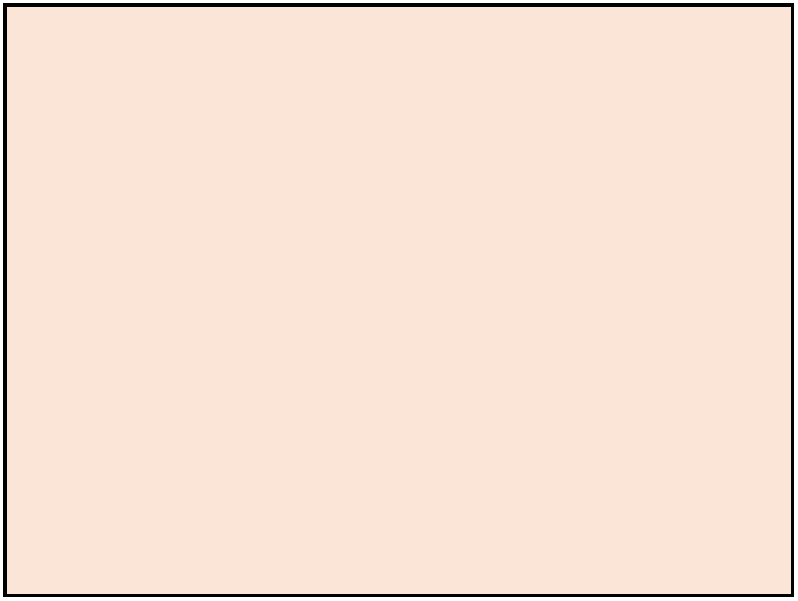}~
		\caption{ (a) Composite material with random coefficients $A(\frac x \varepsilon,\omega)$ for given $\omega$.
			(b) Equivalent material with random coefficients $\hat A(x,\omega)$ for given $\omega$, which
			is a constant matrix in each cell. (c) Stochastic homogenization material with deterministic coefficient $\mathbb{E}(\hat A(x,\omega))$.
		}\label{fig:domain}\label{Fig2}
	\end{figure}
	
In the second stage, we intend to solve the random diffusion problem with the equivalent (piecewise constant) coefficient matrix $\hat A(x,\omega)$, namely,
	\begin{subequations}\label{eq:homo-problem-1}
		\begin{alignat}{2}
		-\textnormal{div}(\hat A(x,\omega)\nabla \hat{u}(x,\omega)) &=f(x)
		&&\qquad \textnormal{in }   D, \\
		\hat{u}(x,\omega) &= 0 &&\qquad \textnormal{on } \partial  D.
		\end{alignat}
	\end{subequations}
{\color{black}
In other words,  the original oscillatory random coefficient matrix
$A(\frac x \varepsilon, \omega)$ is approximated by the equivalent
matrix $\hat A(x,\omega)$ which is a constant matrix
$\hat {A}^{\mathbf k}(\omega)$ in each block $ {D}\cap\varepsilon{\cal Q}_M^{\mathbf k}$.
For a given $\omega$, the computational cost for solving the
homogenized problem
\eqref{eq:homo-problem-1} is less than that
for solving the original problem \eqref{eq:problem}.
However, the equivalent matrix $\hat {A}(x,\omega)$ fluctuates on different blocks
${D}\cap\varepsilon{\cal Q}_M^{\mathbf k}$ due to the non-periodicity.
The fluctuation leads to expensive computational costs for solving
the homogenized problem
\eqref{eq:homo-problem-1} with small parameter $\varepsilon$
and $M={\cal O}(1)$, because the computational mesh size must
be proportional to $M\varepsilon$.
To overcome the difficulty, we adapt the MMC finite element method of \cite{feng2016multimodes} to solve \eqref{eq:homo-problem-1} in an efficient way.
This is possible thanks to our discovery which shows that the equivalent matrix
$\hat A(x,\omega)$ has a nice structure, that is, it can be rewritten as a small
random perturbation of the  deterministic matrix $\mathbb{E}(\hat A(x,\cdot))$, see Section \ref{sec-3.2}. This then sets an ideal stage for us to solve problem \eqref{eq:homo-problem-1} by using the MMC method. The leading term in the MMC approximation will be defined as $u^0_0(x)$, which is the after-sought
approximate solution alluded earlier, see Section \ref{sec-3.3}.

It is important to point out that the MMC  method presented in \cite{feng2016multimodes} can not be directly applied to
a random diffusion problem \eqref{eq:problem} because its diffusion
coefficient does not satisfy the weak media assumption of the MMC method (see Appendix A).

	%%%%%%%%%%
	%\subsection{Small random perturbation of the equivalent matrix $\hat A(x,\omega)$}
	\subsection{Characterization of the equivalent coefficient matrix $\hat A(x,\omega)$}\label{sec-3.2}
	In this subsection, we show that the equivalent matrix $\hat A(x,\omega)$ can be rewritten as a (small) random perturbation of a deterministic matrix. The precise statement is
	given in the following theorem.
	
	\begin{theorem}\label{smallrandompertur}
		Suppose $A(\frac x\varepsilon,\omega)$ satisfies stationary hypothesis \eqref{stationary}. Then the equivalent matrix  $\hat A(x,\omega)$ can be rewritten as:
		\begin{equation}\label{eq:KL3-1}
		{\hat A}(x,\omega)=\mathbb{E}({\hat A}^0(\omega))+\delta A_1(x,\omega),
		\end{equation}
		where {\color{black}${\hat A}^0(\omega)$ denotes the equivalent matrix in any block ${D}\cap\varepsilon{\cal Q}_M^{\mathbf k}$}, $A_1(x,\omega)=(a^1_{ij}(x,\omega))$ with $a^1_{ij}\in L^2(\Omega,L^\infty( {D}))$, and $\delta$ is a (small) parameter
		{which depends on $\varepsilon$ and $M$.}
	\end{theorem}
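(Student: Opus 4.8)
The plan is to extract a single deterministic matrix out of the family $\{\hat A^{\mathbf k}(\omega)\}_{\mathbf k}$ by exploiting stationarity, and then to collect all the remaining block-to-block and sample-to-sample variation into the fluctuation $\delta A_1$. The heart of the argument is the \emph{transfer identity}
\[
\hat A^{\mathbf k}(\omega)=\hat A^0(\tau_{M\mathbf k}\omega)\qquad\text{for all }\mathbf k\in\mathbb{Z}^d .
\]
Since $M\in\mathbb{Z}^+$, the shift $M\mathbf k$ is an integer lattice vector, so the stationarity hypothesis \eqref{stationary} gives $A(y+M\mathbf k,\omega)=A(y,\tau_{M\mathbf k}\omega)$. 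First I would change variables $z=y-M\mathbf k$ (equivalently, insert this coefficient identity directly into the $\mathcal{Q}_M$-integral in \eqref{eq:homo-matrix-1}), which maps the cell problem \eqref{newcellproblem} posed on $\mathcal{Q}_M^{\mathbf k}$ onto the corresponding problem on $\mathcal{Q}_M^0$ driven by the sample $\tau_{M\mathbf k}\omega$. By uniqueness of the $\mathcal{Q}_M^{\mathbf k}$-periodic cell function (unique up to an additive constant, which is harmless because only $\nabla\mathbb{N}$ enters), one gets $\nabla\mathbb{N}^{\mathbf k}_{e_i}(z+M\mathbf k,\omega)=\nabla\mathbb{N}^0_{e_i}(z,\tau_{M\mathbf k}\omega)$; substituting into \eqref{eq:homo-matrix-1} yields the transfer identity.

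Next, because $\tau_{M\mathbf k}$ preserves $\mathbb{P}$, the transfer identity shows that $\hat A^{\mathbf k}(\cdot)$ and $\hat A^0(\cdot)$ share the same law, and in particular $\mathbb{E}(\hat A^{\mathbf k})=\mathbb{E}(\hat A^0)$ for every $\mathbf k$. Hence the block-wise mean of the piecewise-constant field $\hat A(x,\omega)$ equals the single deterministic matrix $\mathbb{E}(\hat A^0)$ regardless of which block contains $x$, which is precisely the leading term in \eqref{eq:KL3-1}. I would then define the remainder $\tilde A(x,\omega):=\hat A(x,\omega)-\mathbb{E}(\hat A^0)$, set
\[
\delta:=\|\tilde A\|_{L^2(\Omega,L^\infty(D))},\qquad A_1:=\tilde A/\delta,
\]
so that $\|A_1\|_{L^2(\Omega,L^\infty(D))}=1$. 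The membership $a^1_{ij}\in L^2(\Omega,L^\infty(D))$ follows from the uniform bounds $\alpha|\xi|^2\le(\hat A^{\mathbf k}(\omega)\xi,\xi)\le\beta|\xi|^2$ that the equivalent matrices inherit from $A\in\mathcal{M}(\alpha,\beta;D)$ via the standard energy estimate for \eqref{newcellproblem}: these make $\tilde A$ essentially bounded in $x$ uniformly in $\omega$, so its $L^2(\Omega,L^\infty(D))$ norm is finite.

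It then remains to argue that $\delta$ is genuinely small. Here I would invoke the ergodic theorem: the transfer identity recasts the block averages as ergodic averages of $A$ under the measure-preserving shifts, so as the averaging volume $|\mathcal{Q}_M|=M^d$ grows, $\hat A^0(\omega)$ concentrates about $\mathbb{E}(\hat A^0)$ and the centered fluctuation $\hat A^0(\omega)-\mathbb{E}(\hat A^0)$ tends to zero (quantitatively, its variance decays with $M$). The dependence of $\delta$ on both $\varepsilon$ and $M$ enters because the $L^\infty(D)$-norm in $x$ picks up a maximum over the $\mathcal{O}\!\left((\varepsilon M)^{-d}\right)$ blocks covering $D=\cup_{\mathbf k}D_{\mathbf k}$, each fluctuation of typical size controlled by $M$; this is why $\delta=\delta(\varepsilon,M)$ is small for $M$ large.

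I expect the main obstacle to be the rigorous bookkeeping in the transfer identity — in particular, justifying that the translated cell function indeed solves the shifted cell problem and that the additive-constant ambiguity is harmless — since the block-independence of the mean, and with it the entire decomposition \eqref{eq:KL3-1}, hinges on this single step. By comparison, the normalization defining $\delta$ and $A_1$, together with the ellipticity bounds placing $A_1$ in $L^2(\Omega,L^\infty(D))$, are routine.
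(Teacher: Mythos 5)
Your proposal is correct, and its core is the same as the paper's: the transfer identity $\hat A^{\mathbf k}(\omega)=\hat A^0(\tau_{M\mathbf k}\omega)$, obtained by applying the stationarity hypothesis \eqref{stationary} to the integer shift $M\mathbf k$ inside \eqref{eq:homo-matrix-1}, followed by measure preservation of $\tau_{M\mathbf k}$ to conclude that every block has the same mean $\mathbb{E}(\hat A^0)$ (your extra care about the additive-constant ambiguity of the periodic cell function is a point the paper leaves implicit). Where you genuinely differ is in how the decomposition \eqref{eq:KL3-1} is completed. The paper runs a Karhunen--Lo\`eve expansion on each block; because the covariance kernel is constant there, the expansion collapses to a single mode, and the paper takes $\delta:=\max_{i,j}\sqrt{\mathbb{V}\mathrm{ar}(\hat a^0_{ij})}$ --- a per-block standard deviation --- with $A_1$ collecting the standardized fluctuations $\mathbf{Z}_1^{\mathbf k}(\omega)\Xi_{\mathbf k}(x)$. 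You instead set $\delta:=\|\hat A-\mathbb{E}(\hat A^0)\|_{L^2(\Omega,L^\infty(D))}$ and $A_1:=(\hat A-\mathbb{E}(\hat A^0))/\delta$. Both choices prove the theorem as stated, since membership $a^1_{ij}\in L^2(\Omega,L^\infty(D))$ is immediate either way ($\hat A$ takes finitely many block values, bounded via ellipticity). The trade-off is this: your $A_1$ has unit norm, which is tidy for the perturbation analysis, but the paper's $\delta$ is exactly the quantity bounded later in \cref{theorem:new-3.2} under the mixing condition \eqref{eq:uniformmixing-1}, namely $\delta\le CM^{-\zeta/2}$, with no dependence on the number of blocks. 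Your $\delta$ places the maximum over the ${\cal O}\bigl((\varepsilon M)^{-d}\bigr)$ blocks inside the expectation, and the maximum of many identically distributed fluctuations is generically larger than a single one: the crude union bound gives only $\delta^2\lesssim (\varepsilon M)^{-d}\max_{i,j}\mathbb{V}\mathrm{ar}(\hat a^0_{ij})$, so the rate of \cref{theorem:new-3.2} does not transfer verbatim to your $\delta$, and smallness in the regime of \cref{convergenceresult} would require the stronger coupling $\sigma>d/(d+\zeta)$ rather than an arbitrary $\sigma\in(0,1)$. Finally, your appeal to the ergodic theorem yields only qualitative variance decay (a.s.\ convergence plus uniform boundedness and dominated convergence); the quantitative decay you parenthetically claim requires the mixing hypothesis, exactly as the paper assumes later. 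Since the theorem itself asserts nothing quantitative about $\delta$, neither point is a gap in your proof of this statement, but the paper's normalization is the one its subsequent results lean on.
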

	
	\begin{proof}
		By the stationary assumption \eqref{stationary},
		the $(i,j)$-entry of  $\hat A(x,\omega)$ can be written as
		%satisfies on $ {D}\cap\varepsilon {\cal Q}_M^{\mathbf{k}}$
		\begin{align}\label{eq:homo-matrix-3}
		&\hat a_{ij}^\mathbf{k}(\omega)=\frac1 {|{\cal Q}_M|}\int_{{\cal Q}_M}(e_i+\nabla \mathbb{N}_{e_i}^\mathbf{k}(y,\omega))^TA(y+M\mathbf{k},\omega)(e_j+\nabla \mathbb{N}_{e_j}^\mathbf{k}(y,\omega))\textnormal \,dy,\\
		&\hskip 0,25in
		=\frac1 {|{\cal Q}_M|}\int_{{\cal Q}_M}(e_i+\nabla \mathbb{N}_{e_i}^\mathbf{k}(y,\tau_{M\mathbf k}\omega))^TA(y,\tau_{M\mathbf k}\omega)(e_j+\nabla \mathbb{N}_{e_j}^\mathbf{k}(y,\tau_{M\mathbf k}\omega))\textnormal \,dy, \nonumber
		\end{align}
		where  the cell function $\mathbb{N}_{e_i}^\mathbf{k}(y,\tau_{M\mathbf k}\omega)$ satisfies
		\begin{subequations}
			\begin{alignat}{2}
			-\textnormal{div}[A(y,\tau_{M\mathbf k}\omega)(e_i+\nabla \mathbb{N}_{e_i}^\mathbf{k}(y,\tau_{M\mathbf k}\omega))] =0& &&\qquad \textnormal{~in}~{\cal Q}_M, \\
			\mathbb{N}_{e_i}^\mathbf{k}(y,\tau_{M\mathbf k}\omega)~\hbox{is}~{\cal Q}_M\hbox{-periodic}.& &&
			\end{alignat}
		\end{subequations}
		
		Thus, $\hat A^{\mathbf k}(\omega)=\hat A^0(\tau_{M\mathbf k}\omega)$, which shows that the equivalent matrix { coincides
		with $A^\star_M(\omega)$ on each block and at each sample.}
		Since the ergodic mapping $\tau_k$ preserves the measure $\mathbb P$, then we have
		\begin{equation}
		\mathbb {E}(\hat{A}^\mathbf{k}(\omega))=\int\limits_\Omega\hat{A}^\mathbf{k}(\omega)\hbox{d}\mathbb P(\omega)
		=\int\limits_\Omega\hat{A}^0(\tau_{M\mathbf k}\omega)\hbox{d}\mathbb P(\tau_{M\mathbf k}\omega)=\mathbb {E}(\hat{A}^0(\omega)),
		\end{equation}
		and
		\begin{align}\label{eq:add-new1}
		\mathbb {V}\hbox{ar}(\hat{A}^\mathbf{k}(\omega))&=\int\limits_\Omega\left(\hat{A}^\mathbf{k}(\omega)-\mathbb E\left(\hat{A}^\mathbf{k}(\omega)\right)\right)^2\hbox{d}\mathbb P(\omega),\\
		&=\int\limits_\Omega\left(\hat{A}^0(\tau_{M\mathbf k}\omega)-\mathbb E\left(\hat{A}^0(\tau_{M\mathbf k}\omega)\right)\right)^2\hbox{d}\mathbb P(\tau_{M\mathbf k}\omega)=\mathbb {V}\hbox{ar}(\hat{A}^\mathbf{0}(\omega)). \nonumber
		\end{align}
		
		Next, we derive a (small) random perturbation form for the equivalent matrix $\hat {A}(x,\omega)$.
		For a given $\mathbf{k}$, $\hat{A}^{\mathbf{k}}(\omega)=(\hat a_{ij}^{\mathbf{k}}(\omega))\in
		L^2(\varepsilon{\cal Q}_M^{\mathbf {k}})$ and the autocorrelation
		function of $\hat a_{ij}^{\mathbf{k}}(\omega)$, which is a constant, is defined by
		\begin{equation}
		{\hbox{Cov}_{\mathbf{k}}} =\mathbb {V}\hbox{ar}(\hat{A}^\mathbf{k}(\omega))=\mathbb {V}\hbox{ar}(\hat{A}^\mathbf{0}(\omega)).
		\end{equation}
		
		Introduce the self-adjoint covariant operator ${\cal T}_{\mathbf{k}}: L^2(\varepsilon{\cal Q}_M^{\mathbf {k}})\rightarrow L^2(\varepsilon{\cal Q}_M^{\mathbf {k}})$ as
		\begin{equation}
		{\cal T}_{\mathbf{k}} v(\cdot):=\int_{\varepsilon{\cal Q}_M^{\mathbf {k}}} {\hbox{Cov}_{\mathbf{k}}}  v(x) dx=
		{\hbox{Cov}_{\mathbf{k}}} \int_{\varepsilon{\cal Q}_M^{\mathbf {k}}}  v(x) dx \quad \forall v\in  L^2(\varepsilon{\cal Q}_M^{\mathbf {k}}).
		\end{equation}
		Let $\{(\lambda_l, \varphi_l)\}_{l\geq 1}$ denote a complete eigen-set of the  operator ${\cal T}_{\mathbf{k}}$ with $\lambda_1> \lambda_2= \cdots =0$ and
		$$ \int_{\varepsilon{\cal Q}_M^{\mathbf {k}}}\varphi_l(x)\varphi_m(x)\hbox{d}x=\delta_{lm}
		\qquad l,\,m=1,\,2,\,\cdots.$$
		
		By the Karhunen-Lo\'{e}ve expansion, we obtain
		\begin{equation}
		\hat a_{ij}^{\mathbf{k}}(\omega)=\mathbb{E} (  \hat a_{ij}^{\mathbf{k}}(\omega))+{ \sum\limits_{l=1}^\infty}
		\sqrt{\lambda_{l}}\mathbf{Z}^{\mathbf{k}}_{l}(\omega)\varphi_{l}(x)=\mathbb{E} (  \hat a_{ij}^{\mathbf{k}}(\omega))+
		\sqrt{\lambda_1}\mathbf{Z}^{\mathbf{k}}_1(\omega)\varphi_1(x),
		\end{equation}
		where $\varphi_1(x)=\left|{\varepsilon{\cal Q}_M^{\mathbf {k}}}\right|^{-\frac12}$ and $\mathbf{Z}^{\mathbf{k}}_1(\omega)$ is a standard normal variable given by
		\begin{equation}
		\mathbf{Z}^{\mathbf{k}}_1(\omega)=\frac { \hat a_{ij}^{\mathbf{k}}(\omega)-\mathbb{E} (  \hat a_{ij}^{\mathbf{k}}(\omega))} {\sqrt{\lambda_1}}\int_{\varepsilon{\cal Q}_{\mathbf {1}}}\varphi_1(x)\hbox{d} x=\sqrt{  \left|{\varepsilon{\cal Q}_{\mathbf {k}}}\right|}\frac { \hat a_{ij}^{\mathbf{k}}(\omega)-\mathbb{E} (  \hat a_{ij}^{\mathbf{k}}(\omega))} {\sqrt{\lambda_1}}.
		\end{equation}
		The principal eigenvalue $\lambda_1$ satisfies
		\begin{equation}
		\begin{aligned}
		\lambda_1&=\frac{\int_{\varepsilon{\cal Q}_M^{\mathbf {k}}} {\cal T}_{\mathbf{k}} \varphi_1 \cdot\varphi_1(x)\hbox{d}x}{\int_{\varepsilon{\cal Q}_M^{\mathbf {k}}}  \varphi_1 (x)\varphi_1(x)\hbox{d}x}={\hbox{Cov}_{\mathbf{k}}}\left(\int_{\varepsilon{\cal Q}_M^{\mathbf {k}}}  \varphi_1(x)\hbox{d}x\right)^2=
		\left|{\varepsilon{\cal Q}_M^{\mathbf {k}}}\right| {\hbox{Cov}_{\mathbf{k}}},
		\end{aligned}
		\end{equation}
		which implies that
		\begin{equation}
		\hat a_{ij}^{\mathbf{k}}(\omega)=\mathbb{E} (  \hat a_{ij}^{\mathbf{k}}(\omega))+
		\sqrt{{\hbox{Cov}_{\mathbf{k}}}}\mathbf{Z}^{\mathbf{k}}_1(\omega)=\mathbb{E} (  \hat a_{ij}^{\mathbf{0}}(\omega))+
		\sqrt{\mathbb {V}\hbox{ar}(\hat{a}_{ij}^\mathbf{0}(\omega))}\mathbf{Z}^{\mathbf{k}}_1(\omega).
		\end{equation}
		Thus, $(i,j)$-component of $\hat A(x,\omega)$ satisfies
		\begin{equation}
		\hat a_{ij}(x,\omega)=\mathbb{E} (  \hat a_{ij}^{\mathbf{0}}(\omega))+
		\sqrt{\mathbb {V}\hbox{ar}(\hat{a}_{ij}^\mathbf{0}(\omega))}\sum\limits_{\mathbf{k}}\mathbf{Z}^{\mathbf{k}}_1(\omega)\Xi_{\mathbf k}(x).
		\end{equation}
		where $\Xi_{\mathbf k}(x)$ stands for the characteristic function of the
		domain $ {D}\cap \varepsilon{\cal Q}_{\mathbf k}$.
		
		Finally, setting $\delta := \max_{1\leq i,j\leq d} \sqrt{\mathbb {V}\hbox{ar}(\hat{a}_{ij}^\mathbf{0}(\omega))}$, then
		the equivalent matrix $\hat{A}(x,\omega)$ can be rewritten as a (small) random perturbation as stated in \eqref{eq:KL3-1}.
	\end{proof}

	\begin{remark}
The smallness of $\delta$ and the precise relationship between $\delta$ and $M$ as well as $\varepsilon$ will be given in \cref{theorem:new-3.2} later in the next section.
	\end{remark}

%%%%%%
%\subsection{An efficient approach for approximate calculation of $\mathbb{E}(u_0(x,\omega))$ }

\subsection{An efficient multi-modes Monte Carlo method for solving the homogenized
	problem \eqref{eq:homo-problem-1}} \label{sec-3.3}

By \cref{smallrandompertur} we know that the equivalent matrix  $\hat{A}(x,\omega)$ can be rewritten as a (small) random perturbation of $\mathbb{E}({\hat A}^0(\omega))$
as given in \eqref{eq:KL3-1}. % under the stationary assumption.
This then sets the stage for us to solve homogenized problem  \eqref{eq:homo-problem-1} by using the MMC method.
%Now, let us rewrite the equivalent matrix $\hat{A}(\omega)$ as  a small random perturbation
%\begin{equation}\label{eq:KL3}
%{\hat A}(x,\omega):=\mathbb{E}({\hat A}^0(\omega))+\delta A_1(x,\omega).
%\end{equation}
%Here $\mathbb{E}({\hat A}^0(\omega))$ is a constant matrix independent
%of $\omega$ and
To proceed, we first notice that $A_1(x,\omega)=(a^1_{ij}(x,\omega))$ with $a^1_{ij} \in L^2(\Omega,L^{\infty}({D}))$ satisfying
\begin{equation*}
\mathbb{P}\bigl\{\omega\in\Omega; \left\| a_{ij}^1(\omega)\right\|_{L^{\infty}({D})}\leq \bar{a}\bigr\}=1.
\end{equation*}
In \cite{feng2016multimodes},  the perturbation term is assumed to
be in $ L^2(\Omega,W^{1,\infty}({D}))$.
However, the perturbation term $A_1(x,\omega)$ in \eqref{eq:KL3-1} is not in $ L^2(\Omega,W^{1,\infty}({D}))$ because $A_1(x,\omega)$ is a piecewise constant function which is discontinuous in ${D}$. Nevertheless,
we show below that the MMC method can be easily extended to the case.
%The precise relationship between the parameter $\delta$ and $M$ will be given in the next section (see \cref{theorem:new-3.2}).
%By using the MMC method, the homogenized problem \eqref{eq:homo-problem-1}
%is approximated to a diffusion equation with deterministic
%constant matrix $\mathbb{E}({\hat A}^0(\omega))$.
%The efficiency of the proposed method is improved, since the MMC method removes the dependency of mesh size on
%the parameter $\varepsilon$ and avoids numerical calculation of expectation in the probability space $\Omega$.

{\color{black} Due to the linear nature of the equivalent problem \cref{eq:homo-problem-1}
and the small random perturbation structure of the equivalent matrix  $\hat{A}(x,\omega)$,
we can postulate the following multi-modes expansion for $\hat u(x,\omega)$:}
\begin{equation}\label{eq:multimodes1}
\hat u(x,\omega)={\sum_{n=0}^{\infty}\delta^n u^0_{n}(x,\omega)}.
\end{equation}
Substituting \cref{eq:KL3-1} and \cref{eq:multimodes1} into \cref{eq:homo-problem-1} and matching the coefficients of $\delta^n$ order terms for $n=1,2,\cdots,$  we get
\begin{subequations}\label{eq:multimodes2}
	\begin{align}
	-\nabla \cdot\left( \mathbb{E}[{\hat A}^0(\omega)]\nabla
	 u^0_{0}(x,\omega)\right) &=f(x),\\
	-\nabla \cdot\left( \mathbb{E}[{\hat A}^0(\omega)]\nabla u^0_{n}(x,\omega)\right) &=\nabla \cdot\left( { A}_1(x,\omega)\nabla u^0_{n-1}(x,\omega)\right) \quad \forall n\geq 1,\\
	u^0_n(x,\omega) &=0\quad \hbox{on}\quad\partial{D}\quad \forall n\geq 0.
	\end{align}
\end{subequations}
%The boundary condition for each mode function $u^0_n$ is given by
%\begin{equation}\label{eq:boundary2}
%u^0_n(x,\omega)=0\quad \hbox{on}\quad\partial{D}\quad \hbox{for}~n\geq1.
%\end{equation}

Clearly, the first mode function $u_0^0(\omega,x)$ satisfies a diffusion equation with a deterministic coefficient matrix $\mathbb{E}({\hat A}^0(\omega))$
and {a deterministic source term $f$}. Thus, $u_0^0(\omega,x)$ is independent of $\omega$ and we relabel it as $u_0^0(x):=u_0^0(\omega,x)$.
%which satisfies
%\begin{subequations}\label{eq:homo-problem-1-last}
%	\begin{alignat}{2}
%	-\textnormal{div}(\mathbb{E}[{\hat A}^0(\omega)]\nabla u_0^0(x))&=f(x), &&\qquad \textnormal{in } D, \\
%	u_0^0(x) &= 0, &&\qquad \textnormal{on } \partial D.
%	\end{alignat}
%\end{subequations}
Moreover, the mode functions $\{u^0_{n}\}_{n\geq 0}$ satisfy a family of
 diffusion equations that have the same deterministic diffusion
 operator $L_0(\cdot): =-\nabla \cdot\left( \mathbb{E}[{\hat A}^0(\omega)]\nabla
 \cdot \right)$ but different right-hand side source terms. Furthermore, $\{u^0_{n}\}_{n\geq 1}$ are defined recursively with the current mode function
 $u_n$ being only dependent directly on the proceeding mode function $u_{n-1}$.
 The well-posedness of multi-mode functions $\{u^0_n\}$ and the
 corresponding error estimates will be discussed in the next section (see \cref{theorem:exist}  and \cref{theorem:2}).

{\color{black}
In this paper, we shall use the first term of the multi-modes expansion, that is, $u_0^0(x)$  
as an approximation for $\hat{u}(x,\omega)$. How to efficiently compute the mode
functions $\{u_n^0\}_{n\geq1}$ is important and challenging, since $A_1(x,\omega)$ in
the right-hand side of (\ref{eq:multimodes2}b) fluctuates  in probability space
and oscillates in different blocks ${D}\cap\varepsilon {\cal Q}^{\mathbf k}_M$.
A natural but expensive approach is to solve  (\ref{eq:multimodes2}b)
for each sample $\omega$ by using a fine mesh with mesh size proportional
to $M\varepsilon$, which is easily implemented but low efficient.
According to \cref{theorem:2} in the next section,  taking $u_0^0(x)$  
as an approximation of $\hat{u}(x,\omega)$ only enjoys the first order convergence rate.
The convergence rate can be further improved by using more mode
functions $\{u_n^0\}_{n\geq1}$. Thus, developing more efficient numerical methods and algorithms for 
computing the mode functions $\{u_n^0\}_{n\geq1}$ are  important and will be addressed in future work.
}
%Because $A_1(x,\omega)$ in \eqref{eq:KL3-1} is usually not known, the mode
%functions $\{u_n^0\}_{n\geq1}$ may not be computable. On the other hand,
%$u_0^0$ is computable. Thus, we shall use $u_0^0(x)$ as an approximation for $\hat{u}(x,\omega)$.}

%convergence results
\section{Convergence analysis}
\label{sec:convergence}
In this section, we analyze the convergence and error estimates for the proposed  two-stage stochastic homogenization method.
We first show the convergence of the equivalent matrix $\hat A(x,\omega)$  to
the homogenization  matrix $A^\star$ as $M\to  \infty$ in the next theorem.

\begin{theorem}\label{theorem:new1} Suppose $A(\frac x\varepsilon,\omega)
	$ satisfies the stationary hypothesis \eqref{stationary}.
	Let  $\hat A(x,\omega) =(\hat a_{ij}(x,\omega))$ be the equivalent matrix defined by \eqref{eq:homo-matrix-1} and $ A^\star =(a_{ij}^\star)$ be defined by \eqref{eq:home-matrix},
	then there holds
	\begin{equation}
	\lim_{M\rightarrow\infty}  \|\hat {a}_{ij}(\cdot,\omega) -a^\star_{ij}\|_{L^\infty(D)}=0  \quad \text{a.s.}.
	\end{equation}
\end{theorem}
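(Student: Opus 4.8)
The plan is to exploit the identity established in the proof of \cref{smallrandompertur}, namely that the equivalent coefficient on block $\mathbf{k}$ coincides with the periodized coefficient evaluated at a shifted sample: $\hat a_{ij}^{\mathbf{k}}(\omega)=a^\star_{ij,M}(\tau_{M\mathbf{k}}\omega)$, where $a^\star_{ij,M}$ is the periodization approximation \eqref{eq:approx-homo-coeff} taken with $N=M$ on the cube ${\cal Q}_M^{\mathbf{k}}=MQ+M\mathbf{k}$. Since $\hat a_{ij}(\cdot,\omega)$ is piecewise constant, equal to $\hat a_{ij}^{\mathbf{k}}(\omega)$ on each block ${D}\cap\varepsilon{\cal Q}_M^{\mathbf{k}}$, the $L^\infty({D})$ norm reduces to a finite maximum,
\[
\|\hat a_{ij}(\cdot,\omega)-a^\star_{ij}\|_{L^\infty({D})}
=\max_{\mathbf{k}\in K_M}\bigl|a^\star_{ij,M}(\tau_{M\mathbf{k}}\omega)-a^\star_{ij}\bigr|,
\qquad K_M:=\{\mathbf{k}:\,{D}\cap\varepsilon{\cal Q}_M^{\mathbf{k}}\neq\emptyset\},
\]
so it suffices to control this maximum as $M\to\infty$.

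First I would observe that, because ${D}$ is bounded and $\varepsilon$ is fixed, each component of every $\mathbf{k}\in K_M$ stays in a bounded range uniformly in $M$; hence there is a fixed \emph{finite} set ${\cal K}\subset\mathbb{Z}^d$ with $K_M\subseteq{\cal K}$ for all $M\geq1$. This reduces the claim to proving, for each of the finitely many fixed $\mathbf{k}\in{\cal K}$, the almost sure convergence $a^\star_{ij,M}(\tau_{M\mathbf{k}}\omega)\to a^\star_{ij}$ as $M\to\infty$, and then intersecting the corresponding finitely many full-measure events to obtain uniformity over $K_M$.

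The core step is this single-block convergence. For $\mathbf{k}=0$ it is precisely the classical convergence of the periodization approximation to the homogenized coefficient under the stationary and ergodic hypotheses \eqref{stationary}--\eqref{ergodic}, established in \cite{anantharaman2012introduction,bourgeat2004approximations} through the subadditive ergodic theorem applied to the energy of the corrector problems on the growing cubes ${\cal Q}_M$. For a general fixed $\mathbf{k}$, the quantity $a^\star_{ij,M}(\tau_{M\mathbf{k}}\omega)$ is the same periodized coefficient computed on the translated cube ${\cal Q}_M^{\mathbf{k}}$; since these cubes still grow isotropically to infinity, the multiparameter subadditive ergodic theorem yields the same almost sure limit $a^\star_{ij}$, \emph{independently} of the moving base point $M\mathbf{k}$. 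I would carry this out by expressing the energy defining $a^\star_{ij,M}(\tau_{M\mathbf{k}}\cdot)$ as a stationary subadditive set function indexed by cubes and invoking its a.s. convergence to the deterministic density $a^\star_{ij}$.

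The main obstacle is precisely the handling of this moving family of blocks. One cannot simply quote pointwise a.s. convergence at a fixed sample, because the relevant shift $\tau_{M\mathbf{k}}$ grows with $M$ even though $\mathbf{k}$ remains in the bounded set ${\cal K}$; this is why the argument must route through the subadditive ergodic theorem, which is insensitive to the cubes' anchor, rather than through a naive Borel--Cantelli or variance estimate, the latter requiring quantitative mixing rates not assumed under mere stationarity and ergodicity. Once the single-block convergence is in hand, uniformity over $K_M\subseteq{\cal K}$ is immediate from the finiteness of ${\cal K}$, which completes the proof.
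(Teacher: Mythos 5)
Your proposal is correct, and at the top level it follows the same decomposition as the paper's own proof: reduce the $L^\infty(D)$ norm to the finitely many blocks meeting $D$, use the stationarity identity $\hat a^{\mathbf{k}}_{ij}(\omega)=\hat a^0_{ij}(\tau_{M\mathbf{k}}\omega)$ from the proof of \cref{smallrandompertur}, and invoke convergence of the periodization approximation \eqref{eq:approx-homo-coeff}. The genuine difference is in how the block-$\mathbf{k}$ convergence is justified. The paper cites Theorem 1 of \cite{bourgeat2004approximations} for the block at the origin and then simply asserts that the general case ``follows'' from the identity $\hat a^{\mathbf{k}}_{ij}(\omega)=\hat a^0_{ij}(\tau_{M\mathbf{k}}\omega)$; but this is precisely the delicate step, since the shift $\tau_{M\mathbf{k}}$ varies with $M$, and measure preservation only transfers convergence in probability (not almost sure convergence) to the moving sequence $M\mapsto\hat a^0_{ij}(\tau_{M\mathbf{k}}\cdot)$. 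Your proof attacks exactly this point: you rerun the convergence argument directly on the translated cubes ${\cal Q}_M^{\mathbf{k}}=M(Q+\mathbf{k})$ via the multiparameter subadditive ergodic theorem, observing in effect that these cubes form a regular family in the Akcoglu--Krengel/Tempel'man sense (their distance $M|\mathbf{k}|$ from the origin is proportional to their side length $M$, a cone condition), so the almost sure limit is the same deterministic constant $a^\star_{ij}$ regardless of the anchor. What the paper's route buys is brevity — a two-line citation; what your route buys is an actual argument for the step the paper glosses over, at the cost of having to adapt the Bourgeat--Piatnitski machinery (sandwiching the periodized energy between superadditive Dirichlet and subadditive Neumann envelopes) to the moving cube family rather than quoting it as a black box — a detail your sketch correctly flags but does not fully execute. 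Your explicit reduction to a fixed finite index set ${\cal K}$ (valid because the physical blocks $\varepsilon M(Q+\mathbf{k})$ have size at least $\varepsilon$, so only boundedly many indices ever meet the bounded domain $D$) and the finite intersection of full-measure events are also sound, and make precise what the paper leaves implicit in passing from blockwise convergence to the $L^\infty(D)$ statement.
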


\begin{proof}
	By Theorem 1 in \cite{bourgeat2004approximations}, we have
	\begin{equation}
	\lim_{M\rightarrow\infty}  \|\hat {a}^0_{ij}(\cdot,\omega) -a^\star_{ij}\|_{L^\infty(D)}=0  \quad \hbox{a.s.}.
	\end{equation}
	For any $\mathbf{k}$, it follows that
	\begin{equation}
	 \lim_{M\rightarrow\infty} \hat {a}^{\mathbf k}_{ij}(\omega) =\lim_{M\rightarrow\infty} \hat {a}^{0}_{ij}(\tau_{M\mathbf{k}}\omega) =a_{ij}^\star
	\quad {\color{black}\hbox{a.s.}} \mbox{ in } L^\infty(D).
	\end{equation}
The proof is complete.
\end{proof}

To derive the rate of convergence of $\hat {A}(x,\omega)$, we introduce the uniform mixing condition as in \cite{bourgeat2004approximations}.
For a given random field $B(x,\omega)$ in $\mathbb{R}^d$, let ${\cal F}_{\tilde{{D}}}$ denote the $\sigma$-algebra $\sigma\{ B(x),\ x\in {\tilde{{D}}}\}$.
The uniform mixing coefficient of $B$ is defined as
\begin{equation}\label{eq:uniformmixing}
\gamma(s)=\sup\limits_{{\tilde D}_1,{\tilde D}_2\subset\mathbb{R}^d,\ \hbox{dist}({\tilde D}_1,{\tilde D}_2)\geq s}
\quad\sup\limits_{\tilde{{\cal D}}_1\in {\cal F}_{\tilde{{\cal D}}_1},\tilde{{\cal D}}_2\in {\cal F}_{\tilde{{\cal D}}_2}}\left|
\mathbb{P}(\tilde{{\cal D}}_1\cap \tilde{{\cal D}}_2)-\mathbb{P}(\tilde{{\cal D}}_1)\mathbb{P}(\tilde{{\cal D}}_2)
\right|.
\end{equation}

In the rest of this section,
%we denote $\gamma(s)$ as the uniform mixing coefficient of $A(\frac x\varepsilon,\omega)$
we assume $\gamma(s)$ satisfies the following growth condition:
\begin{equation}\label{eq:uniformmixing-1}
\gamma(s)\leq c(1+s)^{-\theta}\quad \hbox{for~some}~\theta>0~\hbox{and}~ \forall s>0.
\end{equation}
Then we have

\begin{theorem}\label{theorem:new-3.2}
	Suppose $A(\frac x\varepsilon,\omega)$ satisfies the stationary hypothesis \eqref{stationary}.
	Assume that  the uniform mixing coefficient of $A(\frac x\varepsilon,\omega)$ satisfies \eqref{eq:uniformmixing-1}.
%	The equivalent matrix $\hat A(x,\omega)=(\hat a_{ij}(x,\omega))$ defined by \eqref{eq:homo-matrix-1} and $ A^\star=(a_{ij}^\star)$ defined in \eqref{eq:home-matrix} satisfy the estimate
Let $\hat A(x,\omega)=(\hat a_{ij}(x,\omega))$ and $ A^\star=(a_{ij}^\star)$,
then there holds
	\begin{equation}\label{eq:theorem2:1}
	\mathbb{E}\left[(\hat a_{ij}(x,\cdot)-a_{ij}^\star)^2\right]\leq C M^{-\zeta} \qquad\mbox{for a.e. } x\in D,
	\end{equation}
	and there exists $\zeta=\zeta(\theta,\alpha,d)>0$ such that
	{
	\begin{subequations}\label{eq:theorem2:2}
	\begin{align}
	\mathbb {V}\textnormal{ar}(\hat{a}_{ij}^0(\omega))&\leq C M^{-\zeta},\\
	 \delta&\leq C M^{-\zeta/2}.
	\end{align}
\end{subequations}}
\end{theorem}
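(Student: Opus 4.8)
The plan is to establish the mean square estimate \eqref{eq:theorem2:1} for the reference block $\mathbf{k}=0$, transfer it to all blocks and all $x$ by stationarity, and then obtain the variance bound (\ref{eq:theorem2:2}a) and the bound (\ref{eq:theorem2:2}b) on $\delta$ as elementary consequences. First I would reduce \eqref{eq:theorem2:1} to a statement about $\hat a_{ij}^0$ alone. Recall from the proof of \cref{smallrandompertur} that on each block $\hat a_{ij}(x,\omega)=\hat a_{ij}^{\mathbf k}(\omega)=\hat a_{ij}^0(\tau_{M\mathbf k}\omega)$, and that $\tau_{M\mathbf k}$ preserves $\mathbb P$. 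Since $a_{ij}^\star$ is deterministic, the change of variables $\omega\mapsto\tau_{M\mathbf k}\omega$ gives
\begin{equation*}
\mathbb{E}\bigl[(\hat a_{ij}(x,\cdot)-a_{ij}^\star)^2\bigr]=\mathbb{E}\bigl[(\hat a_{ij}^0(\cdot)-a_{ij}^\star)^2\bigr]\qquad\text{for a.e. }x\in D,
\end{equation*}
so that the left-hand side of \eqref{eq:theorem2:1} is independent of $x$, and it suffices to bound $\mathbb{E}[(\hat a_{ij}^0-a_{ij}^\star)^2]$. This is the quantitative counterpart of \cref{theorem:new1}, which only asserted almost sure convergence without a rate.

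The heart of the argument is then the mean square estimate $\mathbb{E}[(\hat a_{ij}^0-a_{ij}^\star)^2]\le CM^{-\zeta}$. As observed just after \eqref{newcellproblem}, $\hat a_{ij}^0(\omega)$ coincides with the periodization approximation $a_{ij,M}^\star(\omega)$ defined in \eqref{eq:approx-homo-coeff} with $N=M$, since the cell problems \eqref{newcellproblem} and \eqref{eq:cell-problem-truncated} agree for $\mathbf{k}=0$. Under the stationarity hypothesis \eqref{stationary} and the uniform mixing growth condition \eqref{eq:uniformmixing-1}, this is precisely the quantitative convergence result for periodized effective coefficients established in \cite{bourgeat2004approximations}, which produces the exponent $\zeta=\zeta(\theta,\alpha,d)>0$; I would invoke it directly.

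I expect this to be the \emph{main obstacle} of the proof: establishing it from scratch requires the bias--variance splitting
\begin{equation*}
\mathbb{E}\bigl[(\hat a_{ij}^0-a_{ij}^\star)^2\bigr]=\mathbb{V}\textnormal{ar}(\hat a_{ij}^0)+(\mathbb{E}\hat a_{ij}^0-a_{ij}^\star)^2,
\end{equation*}
controlling the fluctuation term by writing $\hat a_{ij}^0-\mathbb{E}\hat a_{ij}^0$ as a spatial average over $\mathcal{Q}_M$ and integrating the decaying covariance furnished by \eqref{eq:uniformmixing-1}, and bounding the bias term by the same mixing mechanism; both steps rely on the uniform energy estimates for the periodized cell functions $\mathbb{N}_{e_i}^0$ on $\mathcal{Q}_M$. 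All of this machinery is carried out in \cite{bourgeat2004approximations}, so no new estimate is needed here.

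Granting the mean square estimate, the remaining two bounds are immediate. For (\ref{eq:theorem2:2}a), since the mean minimizes the mean square deviation over constants, choosing the constant to be $a_{ij}^\star$ yields
\begin{equation*}
\mathbb{V}\textnormal{ar}(\hat a_{ij}^0)=\mathbb{E}\bigl[(\hat a_{ij}^0-\mathbb{E}\hat a_{ij}^0)^2\bigr]\le \mathbb{E}\bigl[(\hat a_{ij}^0-a_{ij}^\star)^2\bigr]\le CM^{-\zeta}.
\end{equation*}
Finally, recalling the definition $\delta=\max_{1\le i,j\le d}\sqrt{\mathbb{V}\textnormal{ar}(\hat a_{ij}^0(\omega))}$ from the proof of \cref{smallrandompertur}, taking square roots in (\ref{eq:theorem2:2}a) and maximizing over $i,j$ gives $\delta\le CM^{-\zeta/2}$, which is (\ref{eq:theorem2:2}b). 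This completes the plan.
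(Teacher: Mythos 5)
Your proposal is correct and follows essentially the same route as the paper: invoke the quantitative estimate of Bourgeat--Piatnitski for the reference block $\hat a^0_{ij}$, transfer it to every block (and hence every $x$) via the stationarity identity $\hat a^{\mathbf k}_{ij}(\omega)=\hat a^0_{ij}(\tau_{M\mathbf k}\omega)$ and measure preservation, then deduce the variance bound and $\delta\le CM^{-\zeta/2}$ elementarily. The only cosmetic difference is in the variance step, where you use the fact that the mean minimizes mean-square deviation while the paper uses H\"older's inequality together with a $2a^2+2b^2$ splitting; both are one-line arguments yielding the same conclusion.
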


\begin{proof}
	By Theorem 5 of \cite{bourgeat2004approximations}, we have
	\begin{equation}
	\mathbb{E}\left[(\hat a^0_{ij}(\cdot)-a_{ij}^\star)^2\right]\leq C M^{-\zeta} \qquad\mbox{for a.e. } x\in D.
	\end{equation}
	 For any $\mathbf{k}$, similar to \eqref{eq:add-new1}, we have
	\begin{equation}
	\mathbb{E}\left[(\hat a^{\mathbf k}_{ij}(\cdot)-a_{ij}^\star)^2\right]=\mathbb{E}\left[(\hat a^0_{ij}(\cdot)-a_{ij}^\star)^2\right]\leq C M^{-\zeta} \qquad\mbox{for a.e. } x\in D,
	\end{equation}
	which completes the proof of \eqref{eq:theorem2:1}.
	To show \eqref{eq:theorem2:2}, using the H\"{o}lder's inequality and \eqref{eq:theorem2:1}, we get
	\begin{equation}
	\bigl(\mathbb{E}[\hat a^0_{ij}]-a_{ij}^\star \bigr)^2=\left(\mathbb{E}[\hat a^0_{ij}-a_{ij}^\star]\right)^2
	\leq \mathbb{E}[(\hat a^0_{ij}-a_{ij}^\star)^2]\leq C M^{-\zeta},
	\end{equation}
	which implies
	\begin{align}
	\mathbb {V}\textnormal{ar}(\hat{a}_{ij}^0)
	&=\mathbb{E}\left[(\hat a^0_{ij}-\mathbb{E}[\hat a^0_{ij}])^2\right],\\
	&\leq 2\mathbb{E}\left[(\hat a^0_{ij}-a_{ij}^\star)^2\right]+
	2\mathbb{E}\left[(a_{ij}^\star-\mathbb{E}[\hat a^0_{ij}])^2\right], \nonumber\\
	&\leq C M^{-\zeta}. \nonumber
	\end{align}
	Thus, the proof is complete.
\end{proof}

\begin{remark}
  One immediate corollary of \cref{theorem:new-3.2} is that $\delta \rightarrow 0$ as $\varepsilon \rightarrow 0$ after taking $M=  \varepsilon^{-\sigma}$ with $\sigma \in (0,1)$. This verifies that
  $\delta$ is indeed a small parameter.
\end{remark}

Following Theorem 3.1 of \cite{feng2016multimodes}, we can show the unique existence and the stability estimate for each mode $u^0_n$ in \eqref{eq:multimodes1}.

\begin{theorem}  \label{theorem:exist}
	Assume that $A(\frac x \varepsilon,\omega)\in {\cal M}(\alpha,\beta, {D})$ and $f\in L^{2}( {D})$.
	There exists a unique solution $u^0_n\in L^2(\Omega,H^1_0( {D}))$ to problem \eqref{eq:multimodes2} for each $n\geq0$ which satisfies
	\begin{equation}\label{eq:regularity}
	\mathbb{E}(\|u_n^0\|^2_{H^{1}( {D})})\leq C_0^{n+1}\|f\|^2_{L^{2}( {D})}.
	\end{equation}
	 for some $C_0>0$ independent of $n$ and $\delta$.
\end{theorem}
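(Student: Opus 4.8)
The plan is to argue by induction on $n$, at each stage solving a deterministic (in the spatial variable, for each frozen sample $\omega$) elliptic boundary-value problem via the Lax--Milgram theorem and then propagating the stability bound through the recursion. Before starting the induction I would record the two structural ingredients that drive the whole argument. Since $\hat A^{\mathbf k}(\omega)$ is itself a homogenized coefficient over the finite cell ${\cal Q}_M^{\mathbf k}$, it inherits the ellipticity bounds of $A$, so that $\alpha|\xi|^2 \le (\hat A^{\mathbf k}(\omega)\xi,\xi) \le \beta|\xi|^2$ for $\mathbb P$-a.s. $\omega$ and all $\mathbf k$; since expectation is monotone and linear this passes to $\alpha|\xi|^2 \le (\mathbb E[\hat A^0(\omega)]\xi,\xi)\le \beta|\xi|^2$. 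Hence the operator $L_0 = -\nabla\cdot(\mathbb E[\hat A^0(\omega)]\nabla\,\cdot)$ induces a bounded, coercive bilinear form on $H^1_0(D)$ with coercivity constant $\alpha$ and continuity constant $\beta$. The second ingredient is the almost-sure bound $\|a^1_{ij}(\cdot,\omega)\|_{L^\infty(D)} \le \bar a$, which gives $\|A_1(\cdot,\omega)\zeta\|_{L^2(D)} \le \bar a'\,\|\zeta\|_{L^2(D)}$ for every $\zeta \in L^2(D)^d$, with $\bar a'$ depending only on $\bar a$ and $d$.

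For the base case $n=0$, the equation $L_0 u_0^0 = f$ with homogeneous Dirichlet data has deterministic coefficients and right-hand side, so Lax--Milgram gives a unique deterministic $u_0^0 \in H^1_0(D)$. Testing against $u_0^0$ and invoking the Poincar\'e inequality yields $\|u_0^0\|_{H^1(D)} \le C_1^{1/2}\|f\|_{L^2(D)}$ for a constant $C_1$ depending only on $\alpha$ and $D$, and because $u_0^0$ is independent of $\omega$ this is exactly the claimed bound with exponent $n=0$. For the inductive step I would assume $u_{n-1}^0 \in L^2(\Omega,H^1_0(D))$ exists and satisfies the stated estimate with exponent $n$. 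For a.e. fixed $\omega$ the datum $\nabla\cdot(A_1(\cdot,\omega)\nabla u_{n-1}^0(\cdot,\omega))$ lies in $H^{-1}(D)$, since $A_1(\cdot,\omega)\nabla u_{n-1}^0(\cdot,\omega)\in L^2(D)^d$; here only the $L^\infty$ bound on $A_1$ is used, which is precisely why the reduced regularity of the piecewise-constant perturbation (no control on its derivatives) causes no difficulty. Lax--Milgram then produces a unique $u_n^0(\cdot,\omega)\in H^1_0(D)$, and testing the weak form against $u_n^0$ gives $\alpha\|\nabla u_n^0\|_{L^2}^2 \le \bar a'\|\nabla u_{n-1}^0\|_{L^2}\|\nabla u_n^0\|_{L^2}$, whence $\|\nabla u_n^0(\cdot,\omega)\|_{L^2} \le (\bar a'/\alpha)\|\nabla u_{n-1}^0(\cdot,\omega)\|_{L^2}$. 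Applying Poincar\'e, squaring, and taking expectations yields $\mathbb E(\|u_n^0\|_{H^1}^2) \le C_2\,\mathbb E(\|u_{n-1}^0\|_{H^1}^2)$ for a constant $C_2$ depending only on $\alpha$, $\bar a$, and $D$. Setting $C_0 := \max\{C_1,C_2\}$ and combining with the induction hypothesis closes the recursion, giving $\mathbb E(\|u_n^0\|_{H^1}^2)\le C_0^{\,n+1}\|f\|_{L^2}^2$ with $C_0$ manifestly independent of both $n$ and $\delta$ (the parameter $\delta$ does not appear in \eqref{eq:multimodes2} at all).

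The step I expect to require the most care is not the energy estimate but the measurability claim, namely that $\omega \mapsto u_n^0(\cdot,\omega)$ is strongly measurable as an $H^1_0(D)$-valued map, so that $u_n^0 \in L^2(\Omega,H^1_0(D))$ rather than merely being defined sample by sample. I would dispatch this by observing that $L_0^{-1}\colon H^{-1}(D)\to H^1_0(D)$ is a bounded linear isomorphism and $\nabla\cdot\colon L^2(D)^d\to H^{-1}(D)$ is bounded linear, so their composition is continuous; since $\omega\mapsto A_1(\cdot,\omega)\nabla u_{n-1}^0(\cdot,\omega)$ is measurable into $L^2(D)^d$ by the inductive measurability of $u_{n-1}^0$ and the measurability of the entries $a^1_{ij}$, the composition with a continuous map is again measurable, and the square-integrability follows from the pointwise bound established above. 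Everything else reduces to a routine chain of Lax--Milgram, Cauchy--Schwarz, and Poincar\'e estimates.
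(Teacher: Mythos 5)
Your proposal is correct and follows essentially the same route as the paper: induction on $n$, ellipticity of the (mean) equivalent matrix, Lax--Milgram for the base case, and an energy estimate using only the almost-sure $L^\infty$ bound on $A_1$ to close the recursion with $C_0$ independent of $n$ and $\delta$. The only difference is one of packaging: where you argue sample-by-sample and then verify strong measurability of $\omega\mapsto u_n^0(\cdot,\omega)$ by hand, the paper delegates both the existence in $L^2(\Omega,H^1_0(D))$ and the stability bound to a cited result (Theorem 3.3 of Ole\u{\i}nik--Shamaev--Yosifian), and justifies the uniform ellipticity of $\hat A$ via Hashin--Shtrikman bounds rather than your appeal to inherited homogenization bounds.
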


\begin{proof}
	Using Hashin-Shtrikman bounds \cite{hashin1962variational}, we have $\hat A(x,\omega)\in {\cal M}(\alpha,\beta,{D})$.
	For $n=0$, the existence of a unique weak solution $u^0_0\in H^1_0( {D})$ follows immediately from the Lax-Milgram Theorem (see, e.g. \cite{gilbarg2015elliptic}) and there holds
	\begin{equation}
	\mathbb{E}(\|u_0^0\|^2_{H^1( {D})})\leq C_0^1\|f\|^2_{L^{2}( {D})}.
	\end{equation}
	
	The proof for the cases $n\geq 1$ can be done by the induction
	argument. Assume that \eqref{eq:regularity} holds for $n=0,1,\cdots,l-1$, then $A_1(x,\omega)\nabla u^0_{n-1}\in \left[L^2(\Omega, L^2( {D}))\right]^d$ (which is the source term in  \eqref{eq:multimodes2}).
	Using Theorem 3.3 in \cite{oleinik2009mathematical}, there exists
	a unique $u_l^0\in L^2(\Omega,H^1_0(D))$ which solves  \eqref{eq:multimodes2} for $n=l$ and fulfills the following estimate:
	\begin{align}
	\mathbb{E}(\|u_l^0\|^2_{H^1( {D})})&\leq C(D)\mathbb{E}\left(\| A_1(x,\omega)\nabla u^0_{n-1}\|^2_{[L^2( {D})]^d}\right),\\
	&\leq d C(D) \bar{a}^2\mathbb{E}\left(\| \nabla u^0_{n-1}\|^2_{[L^2( {D})]^d}\right), \nonumber\\
	&\leq
	d C(D) \bar{a}^2\mathbb{E}\left(\|  u^0_{n-1}\|^2_{H^1( {D})}\right), \nonumber\\
	&\leq C^{l+1}_0\|f\|^2_{L^{2}({D})}. \nonumber
	\end{align}
	The proof is complete.
\end{proof}

 The next theorem establishes an estimate for the error function $\hat u(x,\omega)-u^0_0(x)$, which is an analogue to Theorem 3.2 of \cite{feng2016multimodes}.

\begin{theorem}\label{theorem:2}
	Suppose that $A(\frac x \varepsilon,\omega)\in {\cal M}(\alpha,\beta, {D})$
	satisfies stationary hypothesis \eqref{stationary}.  Assume that $\delta<1$ and $f\in L^{2}(\Omega)$, we have
	\begin{equation}\label{eq:error1}
	\mathbb{E}\bigl(\left\| \hat u(x,\omega ) - u_0^0(x) \right\|_{{H^{1}}( {D} )}^2\bigr) \leqslant C_1{\delta ^{2}}\left\| {f} \right\|_{L^{2}( {D} )}^2.
	\end{equation}
	for some positive constant $C_1$ independent of $\delta$.
\end{theorem}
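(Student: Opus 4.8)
The plan is to derive a single elliptic equation for the error function $e(x,\omega):=\hat u(x,\omega)-u_0^0(x)$ and then run a pathwise energy estimate, deferring the expectation to the very last step. Writing $A_0:=\mathbb{E}(\hat A^0(\omega))$ so that $\hat A(x,\omega)=A_0+\delta A_1(x,\omega)$ by \cref{smallrandompertur}, I would subtract the equation $-\nabla\cdot(A_0\nabla u_0^0)=f$ from the equivalent problem \eqref{eq:homo-problem-1}, after rewriting $\hat A\nabla\hat u = A_0\nabla\hat u+\delta A_1\nabla\hat u$. This yields, for a.e.\ $\omega$,
\[
-\nabla\cdot(A_0\nabla e)=\delta\,\nabla\cdot(A_1\nabla\hat u)\ \text{ in }D,\qquad e=0\ \text{ on }\partial D,
\]
so that the error solves a problem with the deterministic leading operator $L_0$ and a right-hand side of size $\mathcal{O}(\delta)$.

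Next I would test this weak form against $e$ itself. Since every block matrix $\hat A^{\mathbf k}(\omega)$ lies in $\mathcal{M}(\alpha,\beta;D)$ (via the Hashin--Shtrikman bounds already invoked in \cref{theorem:exist}), the averaged matrix $A_0$ inherits the lower bound $\alpha|\xi|^2\le(A_0\xi,\xi)$, giving coercivity with the same constant $\alpha$. Using $\|A_1(\cdot,\omega)\|_{L^\infty(D)}\le C\bar a$ almost surely and the Cauchy--Schwarz inequality on the right-hand side, I obtain the pathwise bound $\|\nabla e(\cdot,\omega)\|_{L^2(D)}\le (\delta C\bar a/\alpha)\,\|\nabla\hat u(\cdot,\omega)\|_{L^2(D)}$.

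It then remains to control $\|\nabla\hat u\|_{L^2(D)}$ by the data. Because $\hat A(x,\omega)\in\mathcal{M}(\alpha,\beta;D)$ as well, the standard energy estimate for \eqref{eq:homo-problem-1} (coercivity together with the Poincar\'e inequality) gives $\|\nabla\hat u(\cdot,\omega)\|_{L^2(D)}\le (C_P/\alpha)\,\|f\|_{L^2(D)}$ uniformly in $\omega$. Chaining the two bounds, squaring, applying Poincar\'e once more to pass from the seminorm to the full $H^1$ norm, and finally taking the expectation—harmless since all constants are deterministic—produces $\mathbb{E}(\|e\|_{H^1(D)}^2)\le C_1\delta^2\|f\|_{L^2(D)}^2$, with $C_1$ depending only on $\alpha$, $\bar a$, $C_P$ and $d$, and independent of $\delta$.

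The computation is essentially routine once the error equation is in place; the only points demanding care are (i) confirming that passing the expectation through the quadratic form preserves the ellipticity constant $\alpha$ for $A_0$, and (ii) exploiting the almost-sure uniform bounds on $\|A_1(\cdot,\omega)\|_{L^\infty(D)}$ and on the coercivity of $\hat A$, so that the final expectation contributes no extra $\omega$-dependence. I note that one could instead sum the multi-modes tail $e=\sum_{n\ge1}\delta^n u_n^0$ using the stability bound \eqref{eq:regularity} of \cref{theorem:exist}, but that route requires $\delta^2 C_0<1$ for geometric convergence of the series, whereas the energy argument above needs only $\delta<1$, matching the hypothesis of the theorem.
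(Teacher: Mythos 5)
Your proposal is correct and takes essentially the same route as the paper: the identical error equation $-\nabla\cdot\bigl(\mathbb{E}(\hat A)\nabla e\bigr)=\delta\,\nabla\cdot\bigl(A_1\nabla \hat u\bigr)$ with $e=0$ on $\partial D$, followed by an energy-type stability estimate, the almost-sure $L^\infty$ bound on $A_1$, and the uniform energy bound on $\hat u$. The only cosmetic difference is that you carry out the coercivity/Cauchy--Schwarz/Poincar\'e argument by hand and pathwise (taking expectation at the end), whereas the paper invokes Theorem 3.3 of Ole\u{\i}nik et al.\ as a black box for the same stability estimate.
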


\begin{proof}
	Let $r(x,\omega):=\hat u(x,\omega)-u^0_0(x)$. Substituting \eqref{eq:KL3-1} into  \eqref{eq:homo-problem-1} and combining it with the first equation of \cref{eq:multimodes2}, we obtain
	\begin{subequations}
		\begin{alignat}{2}
		-\hbox{div}(\mathbb{E}({\hat A}(x,\omega))\nabla r(x,\omega)) &=\delta \hbox{div}(A_1(x,\omega)\nabla \hat u(x,\omega ))  &&\qquad \hbox{in } {D}, \\
		r(x,\omega)&=0 &&\qquad \hbox{on } \partial {D}.
		\end{alignat}
	\end{subequations}
	By Theorem 3.3 of \cite{oleinik2009mathematical},  we get
	\begin{align*}
	\mathbb{E}\bigl(\left\| r\right\|_{{H^{1}}( {D} )}^2\bigr) &\leqslant {C}(D){\delta ^{2}}\mathbb{E}\bigl(\left\|
	A_1(x,\omega)\nabla {\hat u} \right\|_{[L^2( {D})]^d}^2\bigr),\\
	&\leqslant d{C}(D)\bar{a}{\delta ^{2}}\mathbb{E}\bigl(\left\|
	\nabla {\hat u} \right\|_{[L^2( {D})]^d}^2\bigr), \nonumber\\
	&\leqslant C_1 {\delta^{2}}\left\| f \right\|_{L^{2}( {D} )}^2. \nonumber
	\end{align*}
	which completes the proof.
\end{proof}

\begin{theorem}\label{convergenceresult}
	Let $M=C \varepsilon^{-\sigma}$ with $\sigma \in(0,1)$ and $u^\varepsilon$ denote the solution of problem \eqref{eq:problem}.
	Assume that $f\in L^{2}( {D})$, $A(\frac x \varepsilon,\omega)\in{\cal M}(\alpha,\beta; {D})$ satisfies the stationary hypothesis \eqref{stationary} and is ${\cal Q}_M$-periodic for $\mathbb{P}$-a.s. $\omega\in \Omega$.  Then there holds
	\begin{equation}\label{eq:error1-1}
	\lim_{\varepsilon\rightarrow 0}\mathbb{E}\left(\left\| {{u^\varepsilon}(x,\omega ) - u_0^0(x)} \right\|_{{L^{2}}( {D} )}^2\right)  =0.
	\end{equation}
\end{theorem}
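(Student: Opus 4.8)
The plan is to compare $u^\varepsilon$ with the solution $\hat u(x,\omega)$ of the equivalent problem \eqref{eq:homo-problem-1} and, via the triangle inequality, to split the error as $\mathbb{E}(\|u^\varepsilon(\cdot,\omega)-u_0^0\|_{L^2(D)}^2)\le 2\,\mathbb{E}(\|u^\varepsilon(\cdot,\omega)-\hat u(\cdot,\omega)\|_{L^2(D)}^2)+2\,\mathbb{E}(\|\hat u(\cdot,\omega)-u_0^0\|_{L^2(D)}^2)$. The second (stochastic perturbation) term is immediately controlled: since $\|\cdot\|_{L^2(D)}\le\|\cdot\|_{H^1(D)}$, \cref{theorem:2} gives $\mathbb{E}(\|\hat u-u_0^0\|_{L^2(D)}^2)\le C_1\delta^2\|f\|_{L^2(D)}^2$ (valid once $\varepsilon$ is small enough that $\delta<1$), and by \cref{theorem:new-3.2} together with $M=C\varepsilon^{-\sigma}$ one has $\delta\le C M^{-\zeta/2}=C\varepsilon^{\sigma\zeta/2}$, so this term is $O(\varepsilon^{\sigma\zeta})\to0$.

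For the first (deterministic homogenization) term I would first exploit the $\mathcal{Q}_M$-periodicity to collapse the block structure. Combining the identity $\hat A^{\mathbf k}(\omega)=\hat A^0(\tau_{M\mathbf k}\omega)$ from the proof of \cref{smallrandompertur} with $A(y,\tau_{M\mathbf k}\omega)=A(y+M\mathbf k,\omega)=A(y,\omega)$ shows that $\hat A^{\mathbf k}(\omega)=\hat A^0(\omega)$ for every $\mathbf k$; hence $\hat A(x,\omega)$ is spatially constant and $\hat u(\cdot,\omega)$ solves the constant-coefficient problem $-\mathrm{div}(\hat A^0(\omega)\nabla\hat u)=f$ in $D$ with $\hat u=0$ on $\partial D$. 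Rescaling $y=Mz$ then identifies $\hat A^0(\omega)$ as exactly the classical periodic homogenized matrix of the $Q$-periodic field $B(z,\omega):=A(Mz,\omega)$: writing $\eta:=M\varepsilon=C\varepsilon^{1-\sigma}$, the oscillatory coefficient is $A(x/\varepsilon,\omega)=B(x/\eta,\omega)$, of period $\eta$ and with homogenized matrix $\hat A^0(\omega)$, so that $\hat u(\cdot,\omega)$ is precisely the periodic homogenization limit of $u^\varepsilon(\cdot,\omega)$.

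Since $\sigma\in(0,1)$ we have $\eta=C\varepsilon^{1-\sigma}\to0$ as $\varepsilon\to0$, and $B(\cdot,\omega)$ inherits the uniform ellipticity bounds $\alpha,\beta$ for $\mathbb{P}$-a.s. $\omega$. I would then invoke a quantitative periodic homogenization estimate whose constant depends only on $d,\alpha,\beta$ and $D$ --- hence is uniform in $\omega$ and in $M$ --- namely $\|u^\varepsilon(\cdot,\omega)-\hat u(\cdot,\omega)\|_{L^2(D)}\le C\eta^{1/2}\|f\|_{L^2(D)}$. Squaring and integrating in $\omega$ yields $\mathbb{E}(\|u^\varepsilon-\hat u\|_{L^2(D)}^2)\le C^2\,M\varepsilon\,\|f\|_{L^2(D)}^2=O(\varepsilon^{1-\sigma})\to0$, and combining the two terms gives \eqref{eq:error1-1}.

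The crux, and the main obstacle, is this first term. Unlike homogenization of a single fixed field, here the rescaled coefficient $B(z,\omega)=A(Mz,\omega)$ itself varies with $\varepsilon$ through $M=M(\varepsilon)$, so \eqref{eq:error1-1} is a simultaneous limit in the period $\eta=M\varepsilon$ and in the coefficient; what makes the argument go through is precisely that the periodic homogenization rate can be taken uniform over all coefficients sharing the ellipticity ratio $\beta/\alpha$. The robust $O(\eta^{1/2})$ $L^2$-rate for merely bounded measurable coefficients suffices and requires $D$ only Lipschitz; if one instead prefers the first-order rate $O(\eta)$ obtained from the first-order corrector and the constant-coefficient bound $\|\hat u\|_{H^2(D)}\le C\|f\|_{L^2(D)}$, one needs additional regularity of $D$ (e.g.\ convex or $C^{1,1}$) and boundedness of the correctors, but the qualitative conclusion \eqref{eq:error1-1} is unaffected.
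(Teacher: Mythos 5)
Your proposal is correct, and it follows the paper's skeleton --- the same triangle-inequality split into $\mathbb{E}(\|u^\varepsilon-\hat u\|_{L^2(D)}^2)$ and $\mathbb{E}(\|\hat u-u_0^0\|_{L^2(D)}^2)$, with the second term handled exactly as the paper does via \cref{theorem:2} and \cref{theorem:new-3.2} --- but you treat the first term with a genuinely different key lemma. The paper simply invokes the qualitative periodic homogenization theorem (Theorem 6.1 of Cioranescu--Donato) for each fixed $\omega$, obtaining $u^\varepsilon(\cdot,\omega)\to\hat u(\cdot,\omega)$ strongly in $L^2(D)$, and then passes this limit inside the expectation (a step that tacitly uses dominated convergence, available from the uniform energy bounds $\|u^\varepsilon\|_{H^1(D)},\|\hat u\|_{H^1(D)}\le C\|f\|_{L^2(D)}$, though the paper does not say so). You instead note that under $\mathcal{Q}_M$-periodicity $\hat A(x,\omega)\equiv\hat A^0(\omega)$ is exactly the periodic homogenized matrix of the rescaled $Q$-periodic field $B(z,\omega)=A(Mz,\omega)$ at period $\eta=M\varepsilon$, and you then apply a quantitative homogenization estimate whose constant depends only on $d,\alpha,\beta,D$, hence is uniform in $\omega$ and in $M$. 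Your route buys two things the paper's does not: an explicit rate $O(\varepsilon^{\min\{1-\sigma,\,\sigma\zeta\}})$ in \eqref{eq:error1-1}, and, more importantly, a rigorous handling of the fact that both the period $\eta$ and the coefficient $B(\cdot,\omega)$ (hence $\hat u$ itself) move with $\varepsilon$ through $M=M(\varepsilon)$; a qualitative theorem for a single fixed periodic field does not literally cover such a simultaneous limit, so the paper's citation is airtight only under the reading that $A$ is periodic with an $\varepsilon$-independent cell. The price of your argument is that the estimate $\|u^\varepsilon-\hat u\|_{L^2(D)}\le C\eta^{1/2}\|f\|_{L^2(D)}$ with $C=C(d,\alpha,\beta,D)$ requires some boundary regularity of $D$ (Lipschitz for the robust rate, convex or $C^{1,1}$ if one routes through $\|\hat u\|_{H^2(D)}\le C\|f\|_{L^2(D)}$), whereas the paper assumes only that $D$ is bounded; you flag this caveat yourself, and it affects only the stated rate, not the conclusion \eqref{eq:error1-1}.
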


\begin{proof}
	By \cref{theorem:2}, we have
	\begin{equation}
	\mathbb{E}\Bigl(\left\| {{\hat u}(x,\omega ) - u_0^0(x)} \right\|_{{L^2}( {D} )}^2\Bigr) \leq
	C_1{\delta ^{2}}\left\| {f} \right\|_{{L^{2}}( {D} )}^2.
	\end{equation}
	Since  $M=C \varepsilon^{-\sigma}$ with $\sigma \in(0,1)$, it follows from \cref{theorem:new-3.2} that
	\begin{equation}
	\lim_{\varepsilon\rightarrow 0}\mathbb{E}\Bigl(\left\| {{\hat u}(x,\omega ) - u_0^0(x)} \right\|_{{L^2}( {D} )}^2\Bigr) =0,
	\end{equation}
	By Theorem 6.1 of \cite{cioranescu1999introduction}, there hold for   given $\omega\in \Omega$
	\begin{equation}
	u^\varepsilon(x,\omega )\rightharpoonup \hat u(x,\omega)\quad \hbox{weakly~in}~H^1_0( {D})~\hbox{as}~\varepsilon\rightarrow 0,
	\end{equation}
	and
	\begin{equation}
	u^\varepsilon(x,\omega )\rightarrow \hat u(x,\omega)\quad \hbox{strongly~in}~L^2( {D})~\hbox{as}~\varepsilon\rightarrow 0.
%\footnote{{\color{black} We use the embedding theorem for given $\omega$ in here.}}
	\end{equation}
	By the triangle inequity, we get
	\begin{align}
	\lim_{\varepsilon\rightarrow 0}\mathbb{E}\left(\left\| {{u^\varepsilon}(x,\omega ) - u_0^0(x)} \right\|_{{L^{2}}( {D} )}^2\right)
	&\leqslant
	\lim_{\varepsilon\rightarrow 0}\mathbb{E}\left(\left\| {{u^\varepsilon}(x,\omega ) - \hat u(x,\omega)} \right\|_{{L^{2}}( {D} )}^2\right),\\
	&\quad +\lim_{\varepsilon\rightarrow 0}\mathbb{E}\Bigl(\left\| {{\hat u}(x,\omega ) - u_0^0(x)} \right\|_{{L^2}( {D} )}^2\Bigr) =0.\nonumber
	\end{align}
	which completes the proof.
\end{proof}

We note that the above theorem establishes the convergence of the proposed two-stage stochastic homogenization method in the case when
$A(\frac x \varepsilon,\omega)$ is ${\cal Q}_M$-periodic,
%\footnote{{\color{black} This assumption is stated and used only in \cref{convergenceresult}}},
the convergence for the non-periodic case remains open and will be addressed in  future work.

%------------%
%\section{Numerical implement of the two-stage stochastic homogenization method}
\section{Implementation algorithm for the proposed two-stage method} \label{sec-alg}
In this section, we address the implementation issues for the proposed two-stage
stochastic homogenization method.
Let $\{\omega_s\}_{s=1}^L$ be $L$ independent and identically distributed (i.i.d.) random samples in the sample space $\Omega$.
Let $\mathbb{T}^{\mathbf{k}}_h$ stand for a quasi-uniform partition of
  cell ${\cal Q}^{\mathbf k}_M$ such that $\overline{{\cal Q}_M^{\mathbf k}}=\cup_{K_{\mathbf{k}}\in \mathbb{T}^{\mathbf{k}}_h}\overline {K_{\mathbf{k}}}$.
Here we assume that the partition $\mathbb{T}^{\mathbf{k}}_h$ is a body-fitted grid according to the  coefficients matrix $A(y+M\mathbf{k},\omega_s)$.
Let $\mathbb{V}_{r}^{{\mathbf{k}},h}$ denote the standard finite element space
of degree $r$ defined by
\begin{equation}
\mathbb{V}_{r}^{{\mathbf{k}},h}:=\{v\in H_{per}^1( {\cal Q}^{\mathbf k}_M);   v|_{K_{\mathbf{k}}}~\hbox{is a polynomial of degree}~r~ \hbox{for each}~
K_{\mathbf{k}}\in \mathbb{T}^{\mathbf{k}}_h\}.
\end{equation}
Here $ H_{per}^1({\cal Q}^{\mathbf k}_M)\color{black}=\{v\in  H^1( {\cal Q}^{\mathbf k}_M);  ~\hbox{such that} ~v~\hbox{is}~{ {\cal Q}^{\mathbf k}_M  }\hbox{-periodic}\}$.

The finite element cell solutions
$\mathbb{N}_{e_i}^{\mathbf{k},h}(y,\omega_s)$ are defined as
\begin{equation}\label{eq:new-2.23}
 \left(A(y,\omega_s)(e_i+\nabla \mathbb{N}_{e_i}^{\mathbf{k},h}(y,\omega_s)),\nabla  v^h\right)_{{\cal Q}^{\mathbf k}_M} =0\qquad \forall v^h\in \mathbb{V}_{r}^{{\mathbf{k}},h},
\end{equation}
 where $(u^h,v^h)_{{\cal Q}^{\mathbf k}_M} $ denotes the standard $L^2$-inner product
over ${{\cal Q}^{\mathbf k}_M}$. \color{black}
With a help of  the finite  element  cell solutions  $\mathbb{N}_{e_i}^{\mathbf{k},h}(y,\omega_s)$,
the $(i,j)$-component of the approximate { equivalent matrix} $\hat{A}^h(x,\omega_s)$
in block $ {D}\cap\varepsilon{\cal Q}_M^{\mathbf{k}}$ is given by
\begin{equation}\label{eq:homo-algo-1}
\hat a_{ij}^{\mathbf{k},h}(\omega_s)=\frac1 {|{\cal Q}_M|}\int_{{\cal Q}_M}(e_i+\nabla \mathbb{N}_{e_i}^{\mathbf{k},h}(y,\omega_s))^T
A(y+M\mathbf{k},\omega_s)(e_j+\nabla \mathbb{N}_{e_j}^{\mathbf{k},h}(y,\omega_s))\textnormal d y.
\end{equation}
We define the empirical mean and variance for each component of
$\hat A^{\mathbf{k},h}(\omega_s)$ in block ${D}\cap\varepsilon{\cal Q}_M^{\mathbf{k}}$ as
\begin{subequations}\label{eq:new-2.25}
\begin{align}
\mu^{\mathbf{k}}_{ij,L} &:=\mu_{L} \left(\hat a_{ij}^{\mathbf{k},h}(\omega_s)\right)=\frac 1 {L}\sum\limits_{s=1}^{L}\hat a_{ij}^{\mathbf{k},h}(\omega_s),\\
\sigma^{\mathbf k}_{ij,L}&:=\sigma_{L} \left(\hat a_{ij}^{\mathbf{k},h}(\omega_s)\right)=\frac 1 {L-1}\sum\limits_{s=1}^{L}\left(\hat a_{ij}^{\mathbf{k},h}(\omega_s)-\mu^{\mathbf{k}}_{ij,L}\right)^2.
\end{align}
\end{subequations}

Similar to \cite{anantharaman2012introduction}, by the strong law of large numbers and the fact that $\hat a_{ij}^{\mathbf{k},h}(\omega_s)$ is i.i.d., we have
\begin{equation}
\begin{aligned}
\mu^{\mathbf k}_{ij,L} \xrightarrow{L\rightarrow +\infty} \mathbb{E}\left(\hat a_{ij}^{\mathbf{k},h}(\omega_s)\right) \quad \mathbb{P}\hbox{-a.s.}.
\end{aligned}
\end{equation}
It follows from the central limit theorem that
\begin{equation}
\begin{aligned}
\sqrt{L}\left(\mu^{\mathbf k}_{ij,L} -\mathbb{E}\left(\hat a_{ij}^{\mathbf{k},h}(\omega_s)\right)\right)\xrightarrow{L\rightarrow +\infty}
\sqrt{\mathbb V\hbox{ar}\left(\hat a_{ij}^{\mathbf{k},h}(\omega_s)\right)}\, {\cal N}(0,1),
\end{aligned}
\end{equation}
where the convergence holds in law and ${\cal N}(0,1)$ denotes the standard Gaussian law. For sufficiently large $L$, we use $\mu^{\mathbf k}_{ij,L} $ as
 an approximation of $\mathbb{E}\left(\hat a_{ij}^{\mathbf{k},h}(\omega_s)\right)$.

In the finite element approximation of the proposed two-stage stochastic homogenization method, we use $\mu_L=(\mu^{0}_{ij,L})$ as an approximation of
$\mathbb{E}\left[\hat A^0(\omega)\right]$.
Let $\mathbb{T}_{h_0}$ be a quasi-uniform partition of computational domain ${D}$ with mesh size $h_0$ such that $\overline{{D}}=\cup_{K\in \mathbb{T}_{h_0}}\overline {K}$.
Assume $\mathbb{V}_{r}^{h_0}$ be the standard finite element space of order
$r$ over $\mathbb{T}_{h_0}$ defined by
\begin{equation}
\mathbb{V}_{r}^{h_0}:=\{v\in H_{0}^1(D); v|_{K}~\hbox{is a polynomial of degree}~r~ \hbox{for each}~
K\in \mathbb{T}_{h_0}\}.
\end{equation}
The finite element approximation of the first mode function $u^0_0$ is defined by
\begin{equation}\label{eq:new-2.29}
\left(\mu_L \nabla u_0^{0,h_0}(x),\nabla  v^{h_0}\right)_{D} =(f,v^{h_0})_{D}\qquad \forall v^{h_0}\in \mathbb{V}_{r}^{h_0}.
\end{equation}
Its implementation algorithm is given below in \cref{alg:buildtree1}.

\begin{algorithm}[h]
	\caption{ The finite element two-stage stochastic homogenization method}
	\label{alg:buildtree1}
	\begin{algorithmic}[1]
		\STATE{Generate a family of i.i.d. samples $\{\omega_{s}\}_{s=1}^L$. For each $\omega_s$, construct a body-fitted and quasi-uniform partition of the cell ${\cal Q}_M$ according to  $A(y,\omega_s)$ for $y\in{\cal Q}_M$. }
		\STATE{Compute finite element cell solutions
			$\mathbb{N}_{e_i}^{\mathbf{k},h}(y,\omega_s)$ with $\mathbf k =0$ by solving cell problem \eqref{eq:new-2.23} and $(i,j)$-entry of the equivalent matrix $\hat A^{0,h}(\omega_s)$ by \eqref{eq:homo-algo-1}. }
		\STATE{Calculate the $(i,j)$-component of the empirical mean matrix $\mu_L$ by using \eqref{eq:new-2.25}.}
		\STATE{Solve the finite element equation \cref{eq:new-2.29} and
			obtain the  finite element approximation of the first mode function $u^0_0$, which is taken as  the
			finite element two-stage stochastic homogenization solution for the random diffusion equation \eqref{eq:problem}. }
	\end{algorithmic}
\end{algorithm}

Since direct simulations of the random diffusion equation \eqref{eq:problem}
are computationally expensive, we use the empirical mean
$ \mathbb{E}(\hat u(x,\omega))$
of the equivalent solution for \eqref{eq:homo-problem-1} as a reference solution
to verify the efficiency and accuracy of the finite element two-stage stochastic homogenization method.
To the end, let $\mathbb{T}_{h_1}$ be  a quasi-uniform partition of computational domain ${D}$ with mesh size $h_1$
such that $\overline{{D}}=\cup_{K\in \mathbb{T}_{h_1}}\overline {K}$,
and $\mathbb{V}_{r}^{h_1}$ be the standard finite element space of
order $r$ defined by
\begin{equation}
\mathbb{V}_{r}^{h_1}:=\{v\in H_{0}^1(D); v|_{K}~\hbox{is a polynomial of degree}~r~ \hbox{for each}~
K\in \mathbb{T}_{h_1}\}.
\end{equation}
Let $ \hat u^{h_1}(x,\omega_s)$ be the solution of \eqref{eq:homo-problem-1} with $\omega=\omega_s$ defined by
\begin{equation}\label{eq:new-2.29-1}
\left(\hat A^{\mathbf{k},h}(\omega_s) \nabla \hat u^{h_1}(x,\omega_s)  ,\nabla  v^{h_1}\right)_{D} =(f,v^{h_1})_{D}\qquad \forall v^{h_0}\in \mathbb{V}_{r}^{h_1}.
\end{equation}
Its implementation algorithm is given below in \cref{alg:buildtree2}.

\begin{algorithm}
	\caption{Algorithm for computing the reference solution}
	\label{alg:buildtree2}
	\begin{algorithmic}[1]
		\STATE{Generate a family of i.i.d. samples $\{\omega\}_{s=1}^L$. For each $\omega_s$, construct a body-fitted and quasi-uniform partition of the cell
		 ${\cal Q}^{\mathbf k}_M$ according to $A(y,\omega_s)$ for $y \in {\cal Q}^{\mathbf k}_M$.}
		\STATE{For  $\mathbf k \in \mathbb{Z}$ such that $\varepsilon{\cal Q}_M^\mathbf{k}\cap  {D}\neq \emptyset$,
			compute a set of finite element cell solutions
			$\mathbb{N}_{e_i}^{\mathbf{k},h}(y,\omega_s)$ by solving cell problem \eqref{eq:new-2.23} and the equivalent matrix
			$\hat A^{\mathbf{k},h}(\omega_s)=\big(\hat a_{ij}^{\mathbf{k},h}(\omega_s)\big)$ by \eqref{eq:homo-algo-1}.}
		\STATE{For each $\omega_s$, solve the finite element solution $\hat u^{h_1}(x,\omega_s)$ from \eqref{eq:new-2.29-1}.}
		\STATE{Calculate the reference solution for the finite element two-stage stochastic homogenization method by
			\begin{equation*}
			 \hat u^{L,h_1}(x)=\frac1 L{\sum_{s=1}^{L}\hat u^{h_1}(x,\omega_s)}.
			\end{equation*}
		}
	\end{algorithmic}
\end{algorithm}

\section{Numerical experiments} \label{sec:experiments}
In this section, we present several 2D numerical experiments to evaluate the
performance of the proposed two-stage stochastic homogenization method for the random diffusion equation \cref{eq:problem}. We mainly focus on the verification of the accuracy and efficiency of the finite element approximation of the two-stage method. Our numerical experiments are performed on a desktop workstation with 16G memory and  a 3.4GHz Core i7 CPU.
We set $f(x)= 10$ in all the tests. The cell ${\cal Q}_M$ in the two-stage stochastic homogenization method is taken as $Q=(0,1)^2$ with $M=1$.
The computational domain ${D}=(0,1)^2$ and $\varepsilon=1/8$.

\subsection{Convergence of the equivalent matrix}
In this test, we study the convergence of the numerical empirical
mean $\mu_L$ given in \eqref{eq:new-2.25},
which is used in the two-stage stochastic homogenization method.
Assume ${\cal Q}_M=Q=Q_1\cup Q_2$ with $Q_1\cap Q_2=\emptyset$
and $Q_2=(0.25,0.75)^2$. The random coefficients in the  diffusion equation \cref{eq:problem} are chosen as
\begin{equation}\label{eq:example-1}
a_{ij}(\frac{x}{\varepsilon},\omega)=\left\{
\begin{split}
& 3\delta_{ij}+(1+\sin(2\pi \frac{x_1}{\varepsilon})\sin(2\pi \frac{x_2}{\varepsilon})\delta_{ij})Z_{\mathbf k}(\omega)\quad x \in \varepsilon{(Q_1+\mathbf k)},\\
& 300\delta_{ij}+(50+\sin(2\pi \frac{x_1}{\varepsilon})\sin(2\pi \frac{x_2}{\varepsilon})\delta_{ij})Z_{\mathbf k}(\omega)\quad x \in  \varepsilon{(Q_2+\mathbf k)}.
\end{split}\right.
\end{equation}
where the i.i.d. random variables $\big(Z_{\mathbf k}(\omega)\big)_{\mathbf k\in\mathbb {Z}^2}$ satisfy a truncated normal distribution.
The probability density function for the truncated normal distribution is given by
\begin{equation}\label{eq:example-1-1}
f(\omega,-b,b)=\left\{
\begin{split}&\frac{\phi(\omega)}{\Phi(b)-\Phi(-b)}\quad &\omega\in[-b,b],\\
&0\quad &\hbox{otherwise}.\end{split}\right.
\end{equation}
where $\phi(\omega)=\frac{1}{\sqrt{2\pi}}\exp{\left(-\frac 12 \omega^2\right)}$, $\Phi(\omega)=\frac 1 2(1+\hbox{erf}(\omega/\sqrt{2}))$,
and $b=1.5$.

For the comparison purpose, we calculate $A^\star_N=(\hat a_{ij,N}^\star(\omega))$ defined by the ``cut-off" approach in \eqref{eq:approx-homo-coeff}
and take it as the reference solution. Since $A^\star_N$ converges to $A^\star$ as $N\rightarrow \infty$,
we compare the values of $\mu_L$ and $A^\star_N$ by setting $L=N^2$ to study the convergence behavior of $\mu_L$.
We generate a family of random variables $ Z_{\mathbf k}(\omega)$ with $1\leq k_1\leq N$ and $1\leq k_2 \leq N$.
In the two-stage stochastic homogenization method, the $L$ samples are then taken as
$ Z_{0}(\omega_s)=Z_{\mathbf k}(\omega_s)$ with $s=k_2 N+k_1$.
To neglect the numerical errors coming from the finite element discretization,
we use a finer quasi-uniform mesh of $\mathbb{T}^{\mathbf{k}}_h$ with $h=1/60$.

The numerical results for $\mu_L$ and $A^\star_N$
with $L=N^2=4,16,\cdots,1024$ are given in  Table \ref{tab-e0-1}. From the table we
observe that the relative errors for the equivalent matrix  $\mu_L$ by taking the stochastic homogenization matrix
$A^\star_N$ as reference increase as $N$ increases and stop at a small value (about 4\%).
Since $A^\star_N$ converges to $A^\star$ as $N\rightarrow \infty$, thus one can take $\mu_L$
as a valid approximation of $A^\star$. To study statistic fluctuations of the equivalent matrix  $\mu_L$ and the stochastic homogenization matrix
$A^\star_N$, we take $N=22$ as an example and run the simulation with  twenty sets of  i.i.d. samples. Each set consists of 484 samples.
The numerical expectation and variance are given in Table \ref{tab-e0-2}, which clearly show that  the equivalent matrix  $\mu_L$
is a good approximation for the stochastic homogenization matrix  $A^\star$. The total computation time for each set of the two approaches
is also reported in Table \ref{tab-e0-2}, which demonstrates that the proposed two-stage stochastic  homogenization method is more efficient than
the classical stochastic homogenization method with almost the same accuracy. As a comparison to the cell problem posed on $Q_{N}$ for $N=22$ in the ``periodization" stochastic homogenization method,
the proposed two-stage stochastic homogenization method only needs to deal with $N^2$ cell problems defined on the unit cell $Q$,
which is the main reason for the computational saving and the efficiency improvement.  It should be pointed out that the  $N^2$ cell problems can be solved naturally in parallel.

\begin{table}[!htbp]
	\caption{\label{tab-e0-1}The comparison of equivalent coefficients
		$\mu_{11,L}^0$ and 	stochastic homogenization coefficients $\hat{a}^\star_{11,N}$ for  cell $Q_N$.
		The ``Error" in the table is defined as $|\mu_{11,L}^0-\hat{a}^\star_{11,N}|/\hat{a}^\star_{11,N}$. The comparisons for the other three entries of the matrix are similar but not showed here.}
	\footnotesize
	\begin{center}
		\begin{tabular}{cccccccc}
			\toprule
			$L=N^2$ & $\mu_{11,L}^0$ & $\hat{a}^\star_{11,N}$ & Error & $L=N^2$ &  $\mu_{11,L}^0$ & $\hat{a}^\star_{11,N}$  & Error \\
			\midrule
			4    & 4.9437 & 4.8932 & 0.0103 &  324    & 5.1923 & 5.0009 & 0.0383 \\
			16   & 5.0970 &  4.8020 & 0.0614  &  400    & 5.2592 & 4.9960 & 0.0527 \\
			36   &  5.3440 &  5.2024 & 0.0272  &  484    & 5.2100 &4.9941 & 0.0432 \\
			64   &  5.5265 &  5.1084 & 0.0818  &  576   & 5.3724 & 5.0974 & 0.0539 \\
			100   &  5.3234 &  5.0461 &  0.0550 &  676    & 5.2123 & 4.9562 & 0.0517 \\
			144   &  5.1703 &  4.8894 & 0.0575  &  784    & 5.2520 & 5.0438 & 0.0413 \\
			196   &  5.1830 &  4.8505 &  0.0685 &  900    & 5.2447 & 5.0212 & 0.0445 \\
			256   &  5.1892 &  4.8886 &0.0615   &  1024    & 5.1899 & 4.9759 & 0.0430 \\
			\bottomrule
		\end{tabular}
	\end{center}
\end{table}

\begin{table}[!htbp]
	\caption{\label{tab-e0-2}The comparison of the numerical expectation and variance of equivalent matrix and stochastic homogenization matrix.}
	\footnotesize
	\begin{center}
		\begin{tabular}{cccc}
			\toprule
			&  Expectation & Variance  & Compute time (s)  \\
			\midrule
			$\mu_{11,L}^0$  & 5.2130 & 0.1320 & 125.9\\
			$\hat{a}^\star_{11,N}$                                     &  4.9991 &  0.1334&  917.5\\
			
			\bottomrule
		\end{tabular}
	\end{center}
\end{table}

\subsection{Verification of accuracy and efficiency}
In order to validate the accuracy and efficiency of the proposed two-stage stochastic
homogenization method, three kinds of tests (Test A, B, C)
are performed in this subsection. In Test A, the composite materials
have periodic structure and random coefficients;
in Test B,   the materials have random structure and deterministic coefficients;
in Test C,   both the structure and coefficients of the materials are random.

\paragraph*{Test A: Composite materials with periodic structure and random coefficients}
In this simulation, we consider two types (Type I and Type II) of composite
materials with periodic structure and random parameters. The computational domain
${D}$ is decomposed into $8\times8$ cells and all cells have the same geometry constructed by matrix (denoted by $Q_1$) and inclusions (denoted by $Q_2$).
The unit cell of Type I  includes a square inclusion as shown in Figure  \ref{fig:example1-1}-(a).  Figure \ref{fig:example1-1}-(b) shows the unit cell of Type II  which contains 70 elliptical inclusions with uniform random distribution,
which is generated by the take-and-place algorithm \cite{wang1999mesoscopic}. The random coefficients for matrix and inclusion are given by \cref{eq:example-1},
in which  i.i.d. random variables $Z_{\mathbf k}(\omega)$  satisfy the uniform distribution over $[-1,1]$ or
a  truncated normal distribution with the probability density function defined by \cref{eq:example-1-1}.

\begin{figure}[h]
	\begin{center}
		{\tiny(a)} \includegraphics[width=0.44\textwidth]{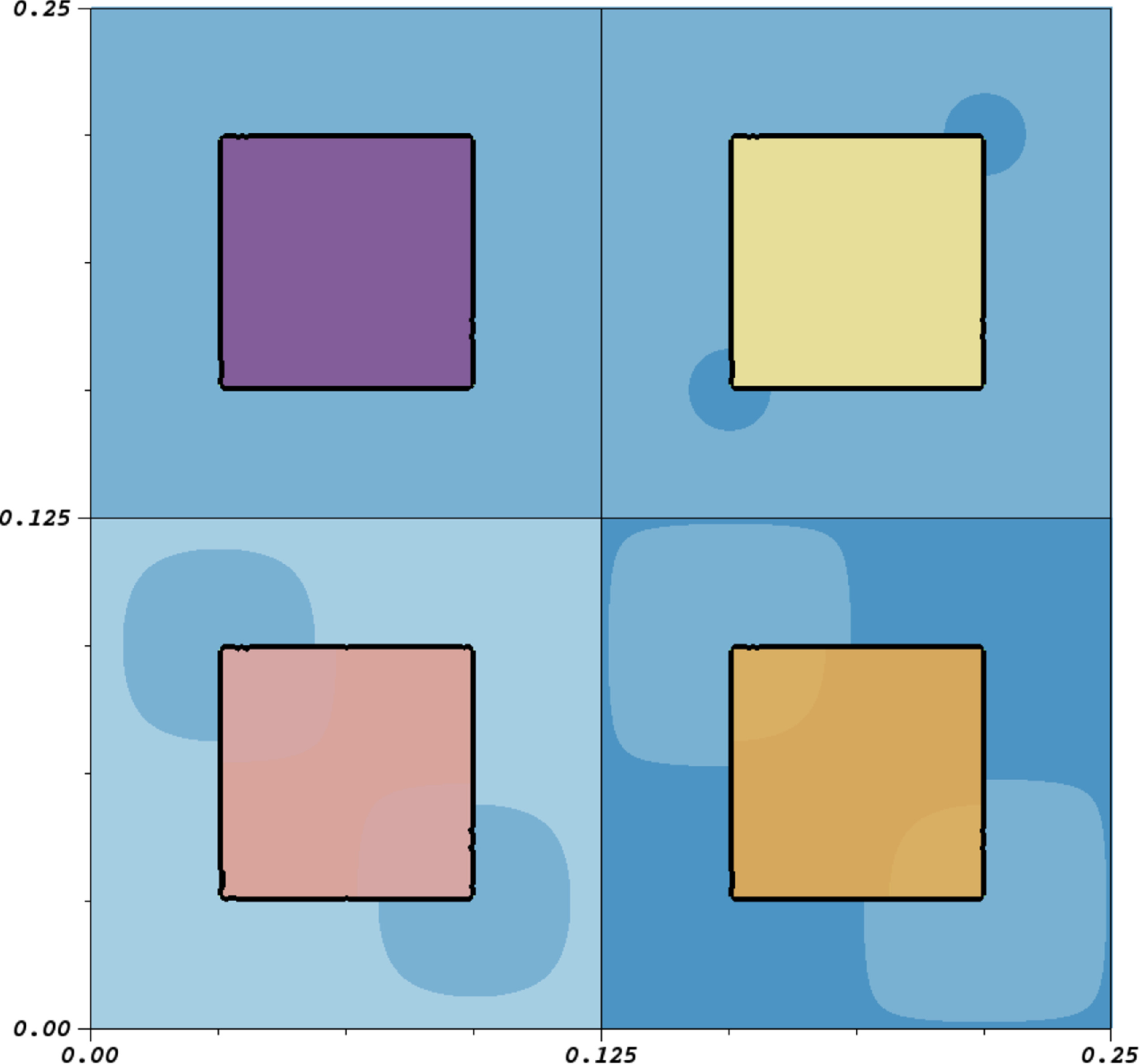}
		{\tiny(b)}    \includegraphics[width=0.44\textwidth]{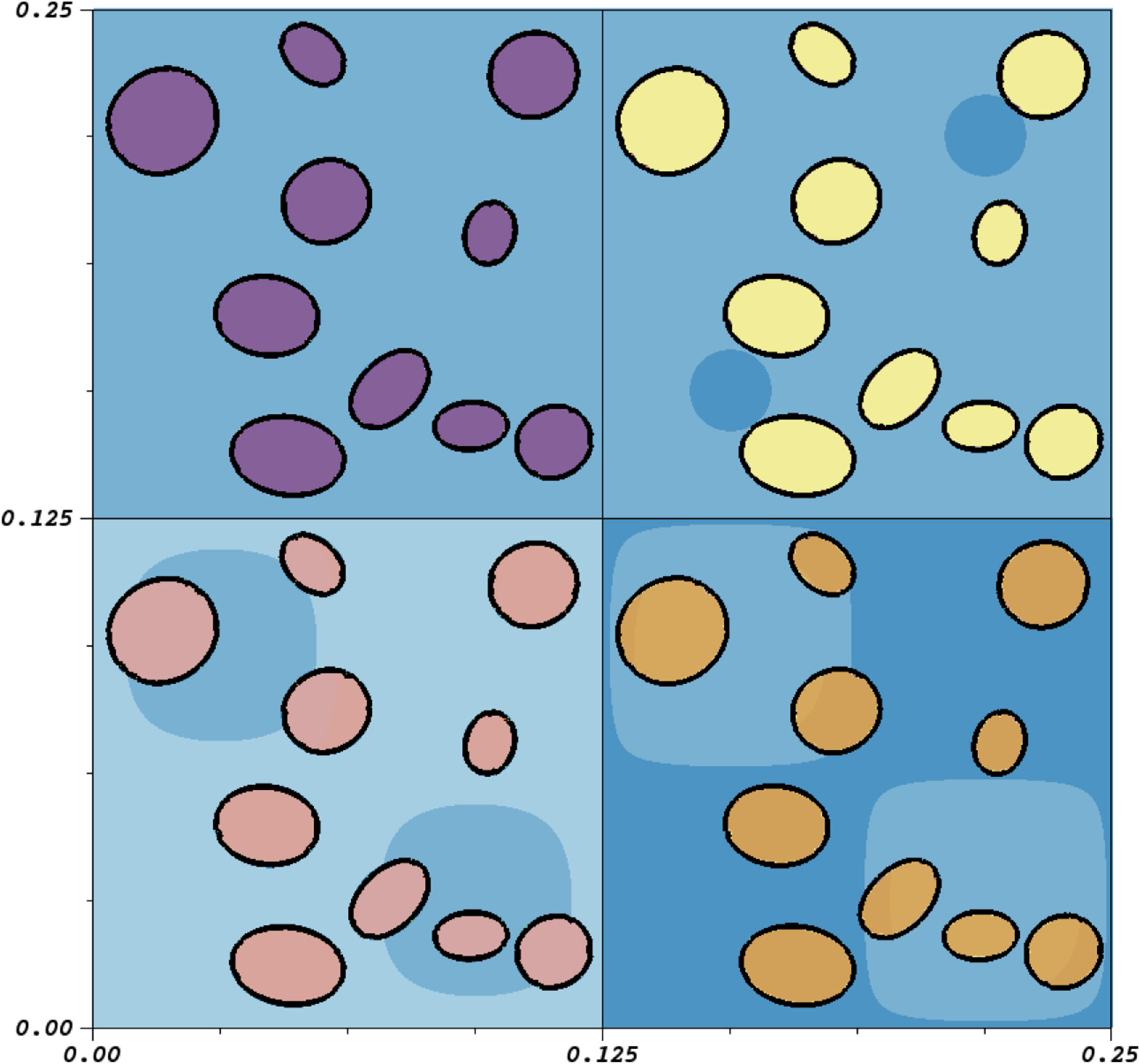}
	\end{center}
	\caption{Test A: Composite materials with periodic structure and random coefficients.
		Pseudo-color of $a_{11}(x,\omega)$ on the domain $(0,\ 0.25)^2$ is plotted. (a) Type I: a square inclusion. (b)
		Type II: 10 elliptical inclusions with deterministic major/minor axis and rotation angles in each cell are taken as an example. }\label{fig:example1-1}
\end{figure}

\begin{figure}[h]
	\begin{center}{\tiny(a)}   \includegraphics[width=0.44\textwidth]{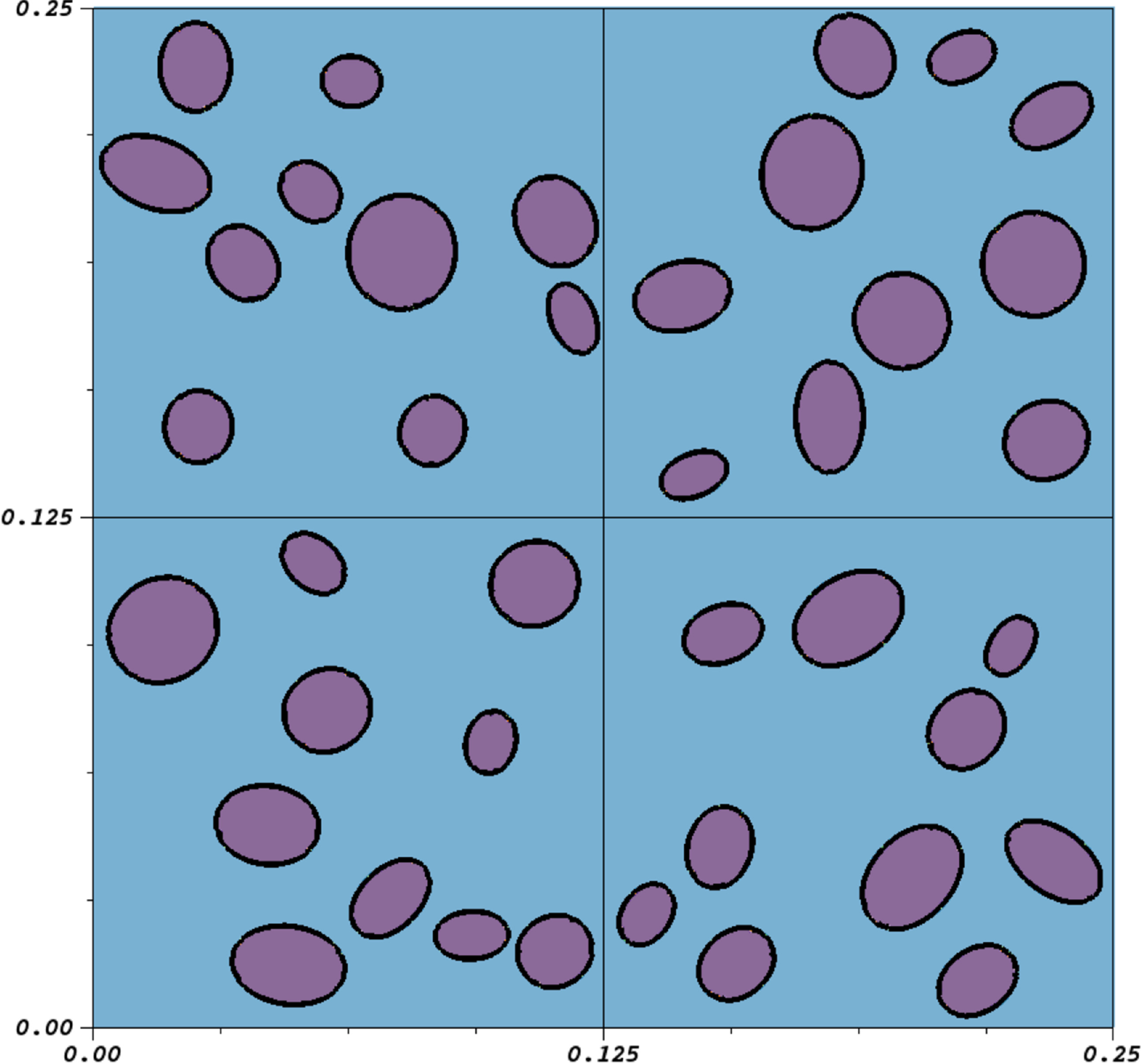}
		{\tiny(b)}  \includegraphics[width=0.44\textwidth]{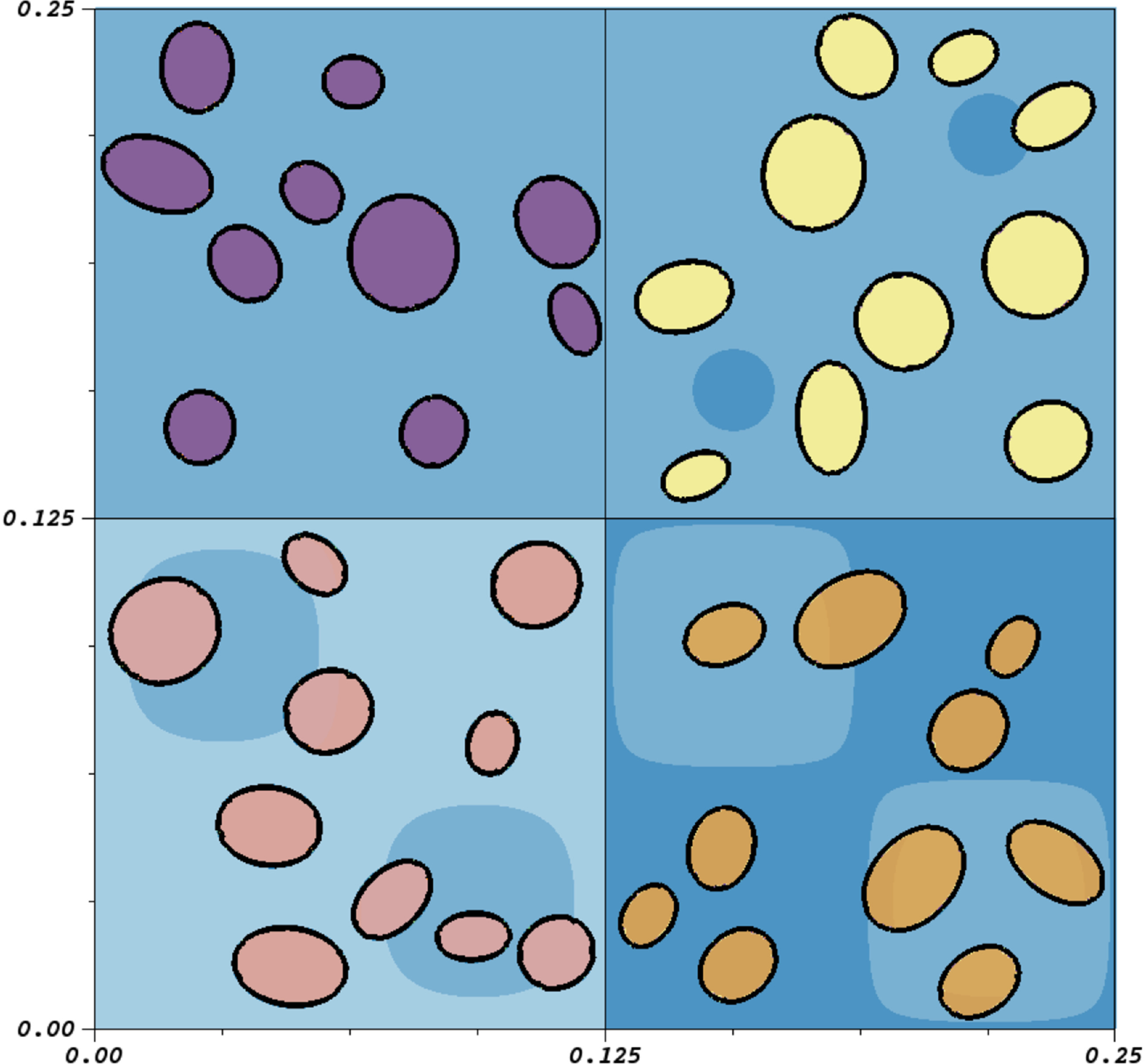}
	\end{center}
	\caption{Pseudo-color of $a_{11}(x,\omega)$ on the domain $(0, 0.25)^2$ is plotted.
		Elliptical inclusions with random position, major/minor axis, and rotation angles in each cell.
		(a) Test B: Composite materials with  random geometry structure and deterministic coefficients.
		(b)  Test C: Composite materials with random geometry structure and random coefficients.}\label{fig:example1-2}
\end{figure}

\begin{figure}[h]
	\centering
	{\tiny(a)}\includegraphics[width=0.45\textwidth]{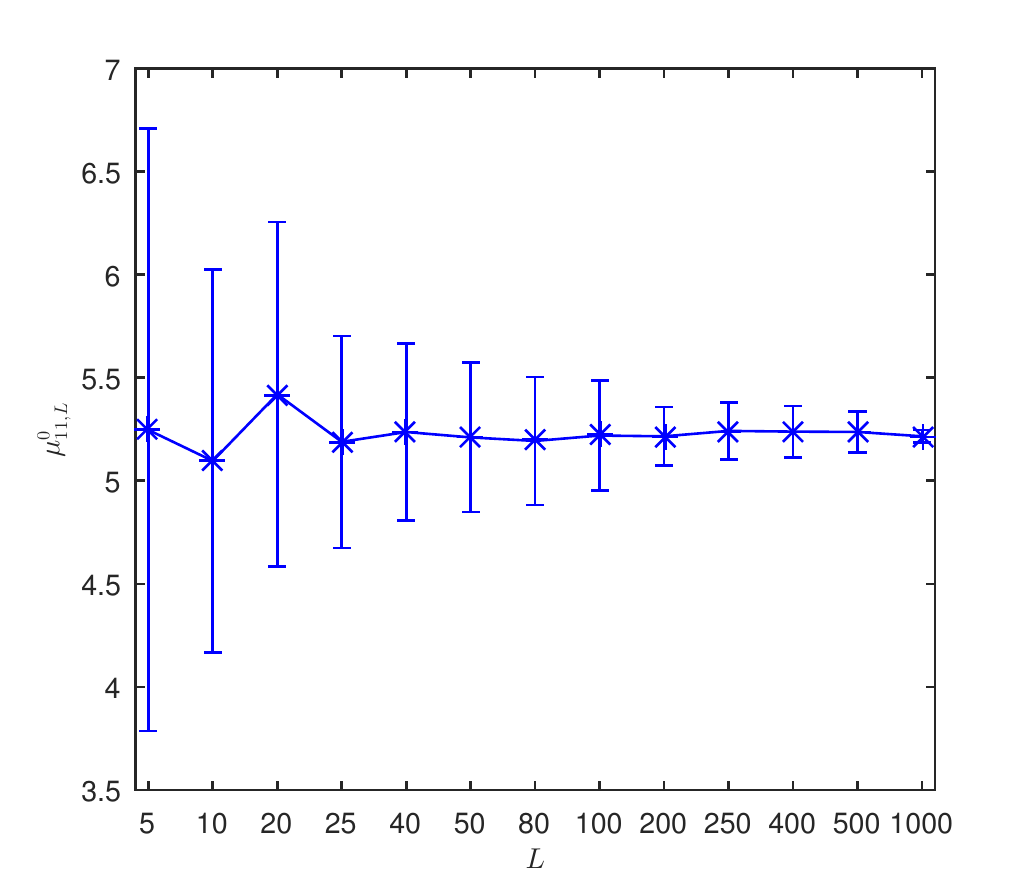}~
	{\tiny(b)}\includegraphics[width=0.45\textwidth]{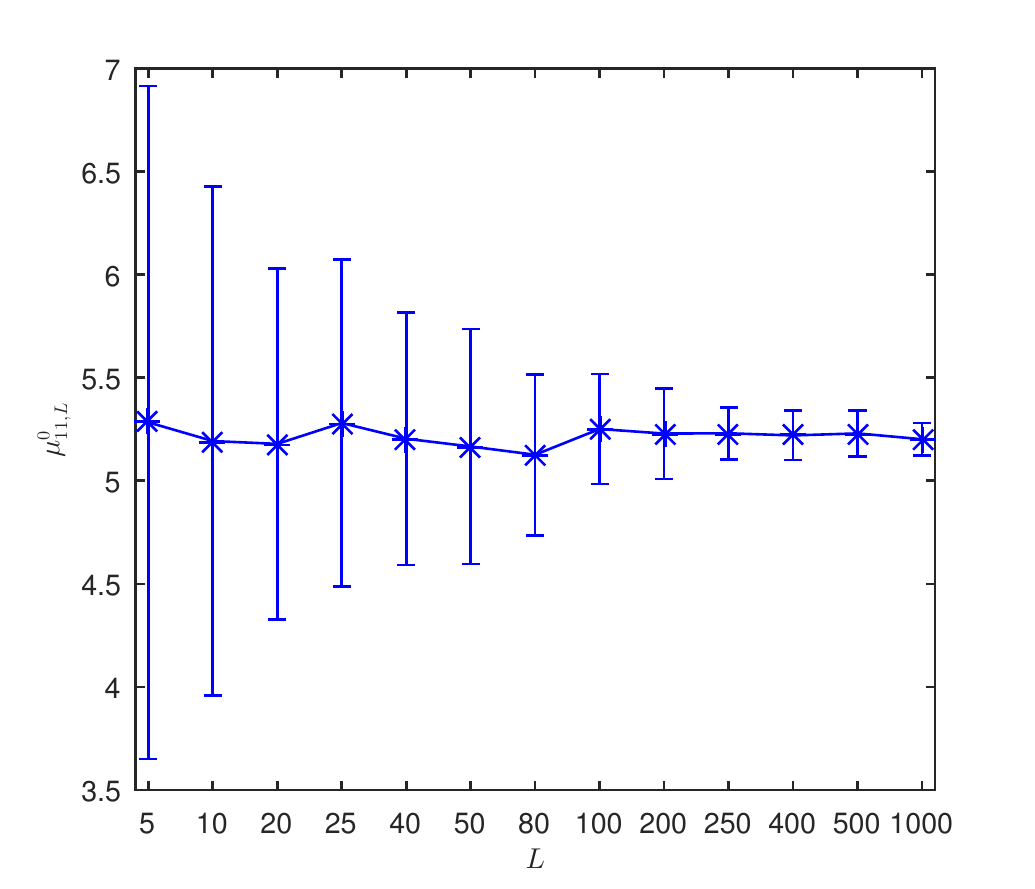}
	{\tiny(c)}\includegraphics[width=0.45\textwidth]{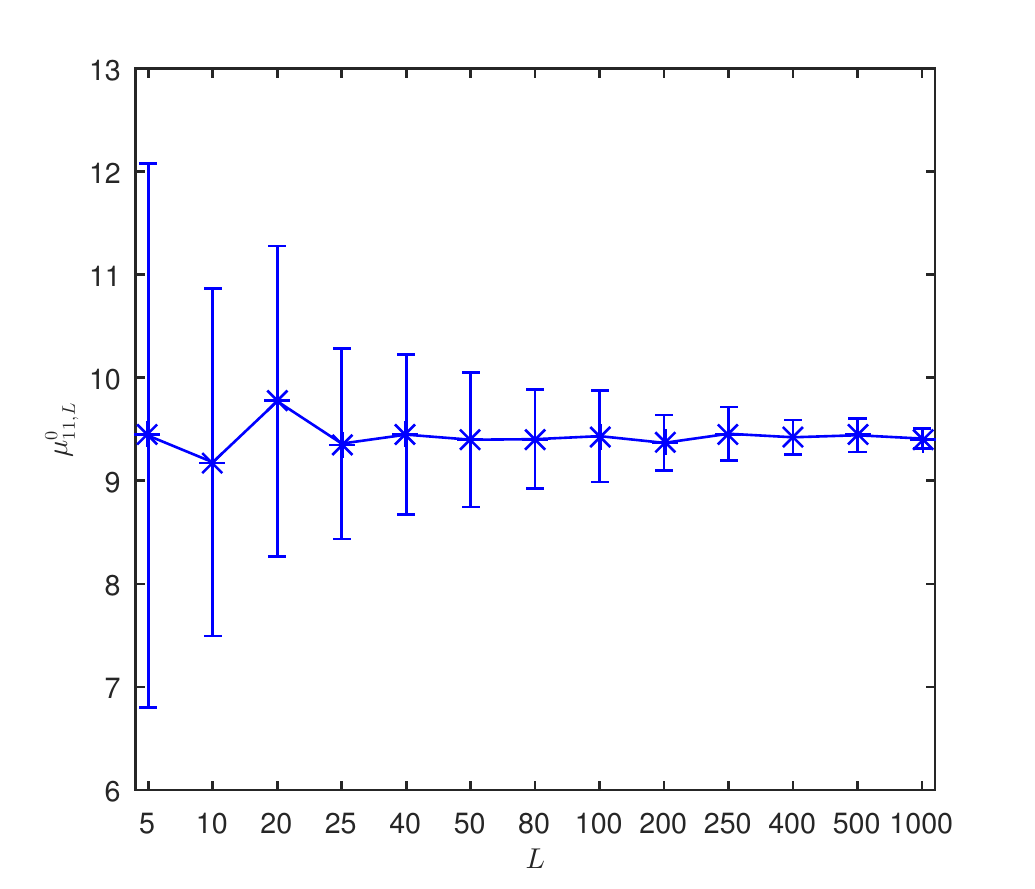}~
	{\tiny(d)}\includegraphics[width=0.45\textwidth]{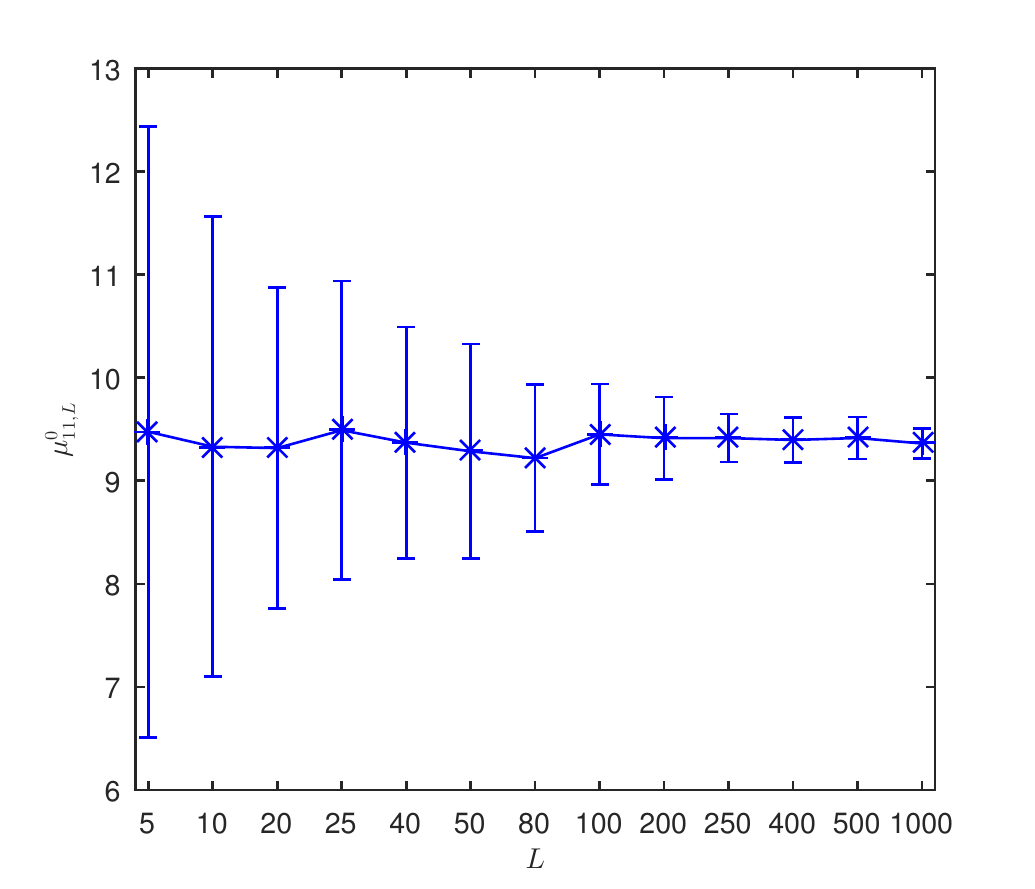}
	\caption{
		The numerical expectation and fluctuation of equivalent coefficient $\mu^0_{11,L}$.
		(a) Type I of Test A with coefficients satisfying uniform distribution over $[-1,1]$;
		(b) Type I of Test A with coefficients satisfying  the truncated normal distribution;
		(c) Type II of Test A with coefficients satisfying uniform distribution over $[-1,1]$;
		(d) Type II of Test A with coefficients satisfying  the truncated normal distribution.
	}\label{fig:example1-3}
\end{figure}

\begin{figure}[h]
	\centering
	{\tiny(a)}\includegraphics[width=0.45\textwidth]{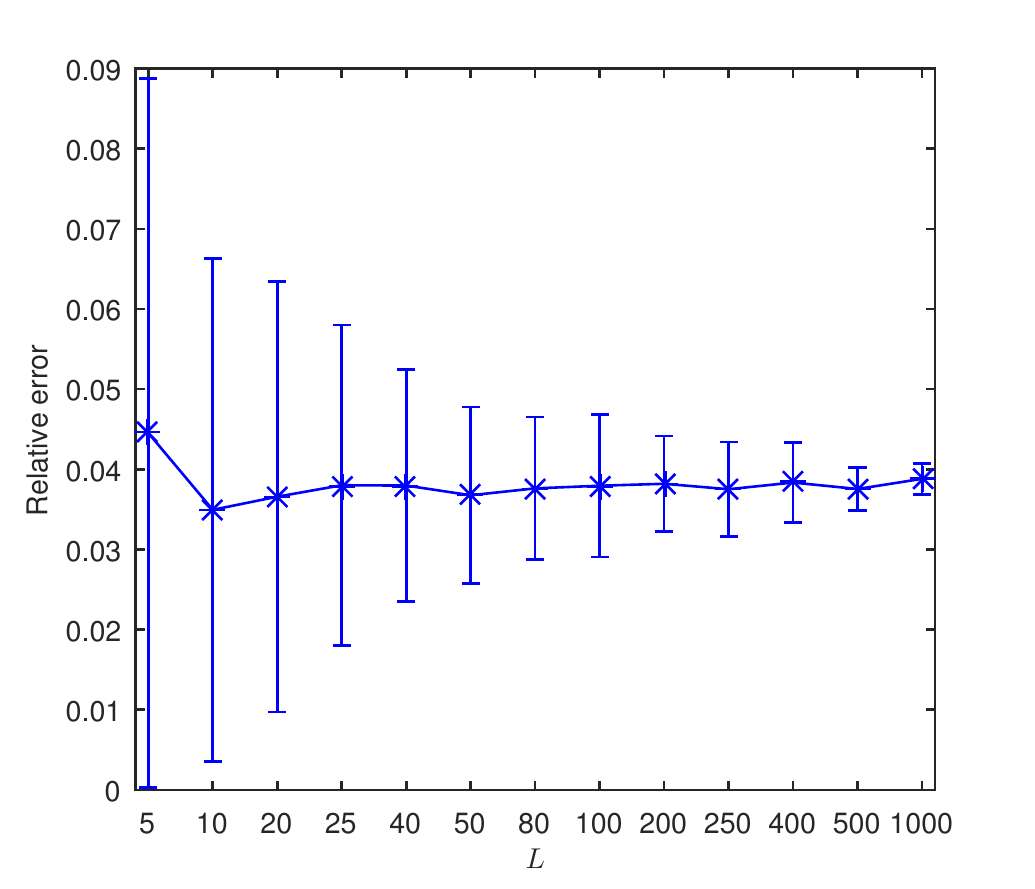}~
	{\tiny(b)}\includegraphics[width=0.45\textwidth]{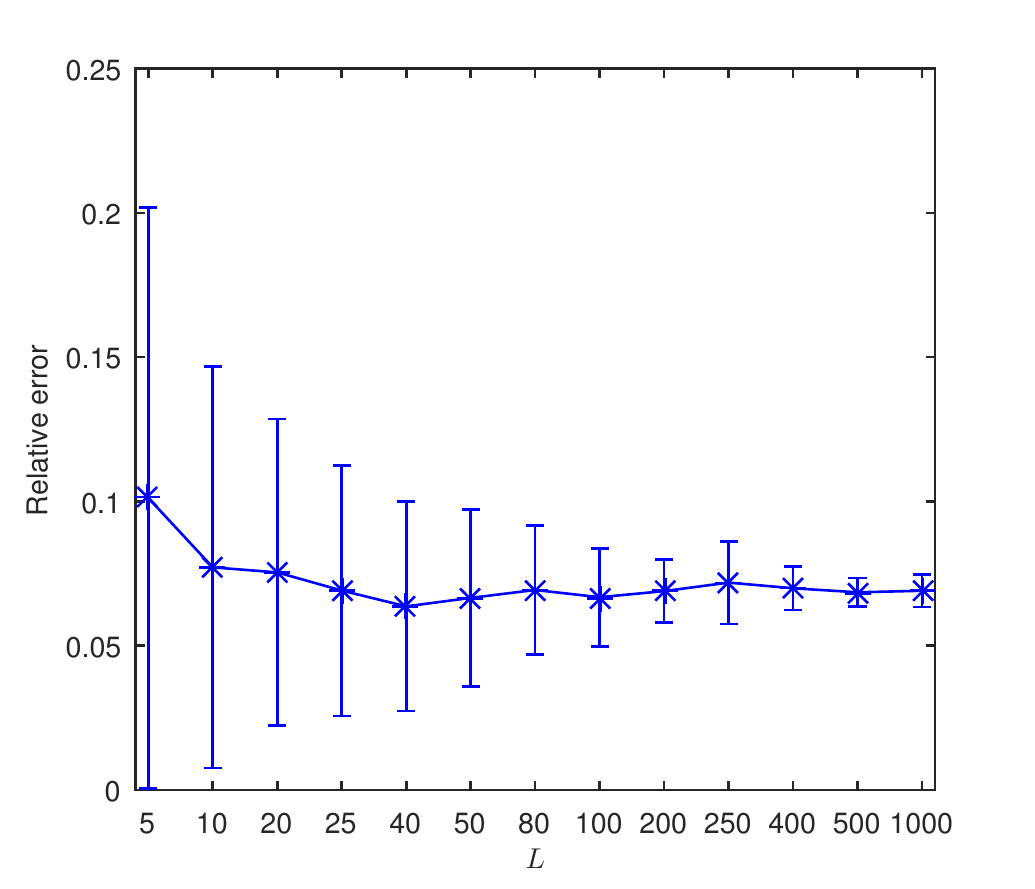}
	{\tiny(c)}\includegraphics[width=0.45\textwidth]{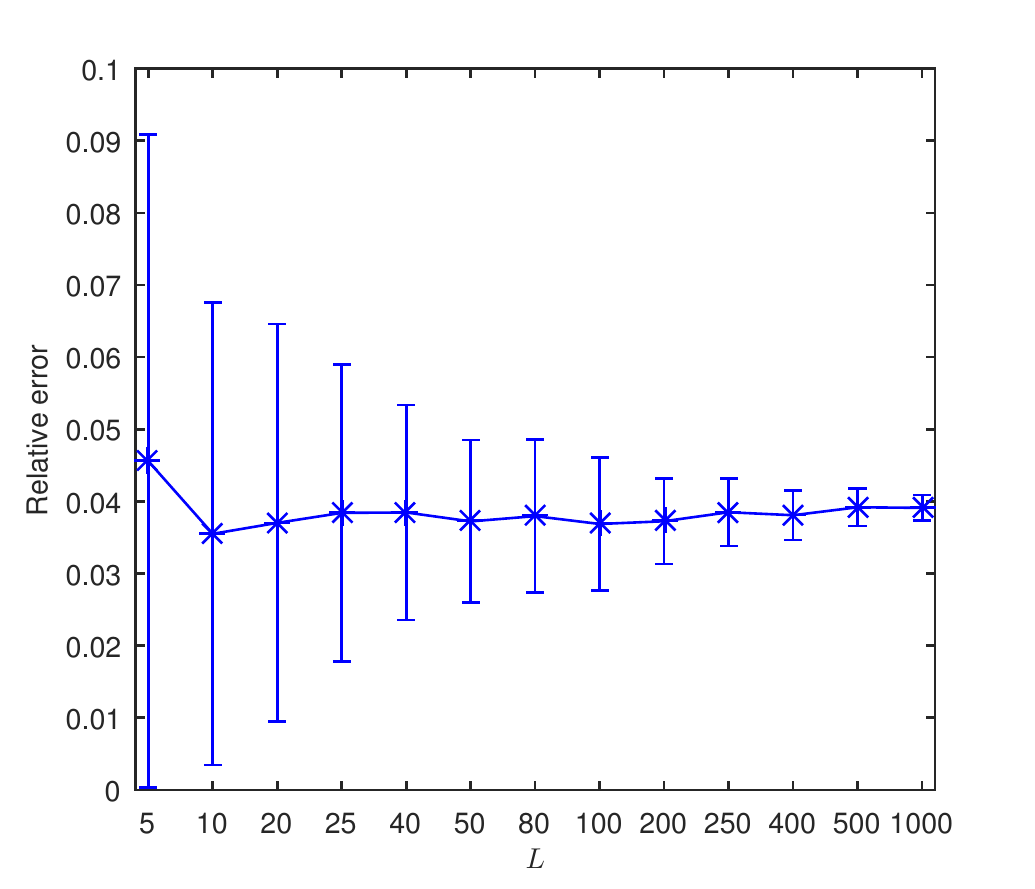}~
	{\tiny(d)}\includegraphics[width=0.45\textwidth]{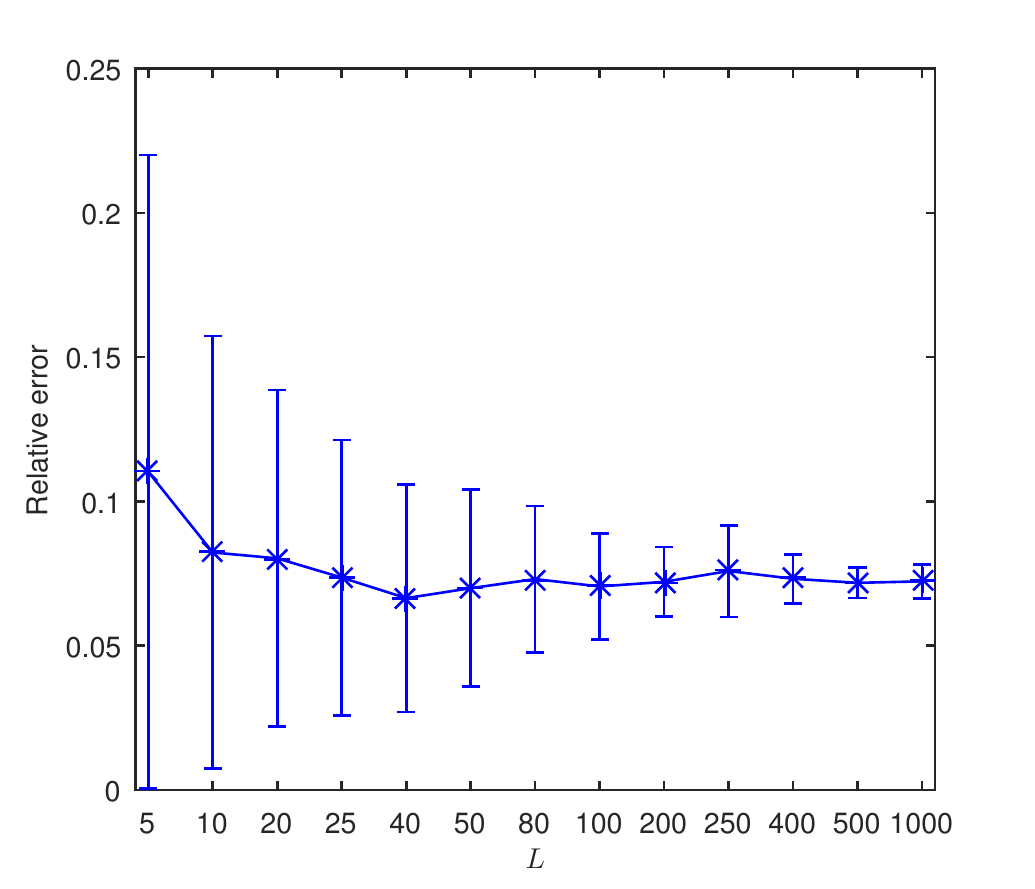}
	\caption{The relative errors of the two-stage stochastic homogenization method.
		(a) Type I of Test A with coefficients satisfying uniform distribution over $[-1,1]$;
		(b) Type I of Test A with coefficients satisfying  the truncated normal distribution;
		(c) Type II of Test A with coefficients satisfying uniform distribution over $[-1,1]$;
		(d) Type II of Test A with coefficients satisfying  the truncated normal distribution.
	}\label{fig:example1-4}
\end{figure}

\begin{figure}[h]
	\centering
	{\tiny(a)}\includegraphics[width=0.47\textwidth]{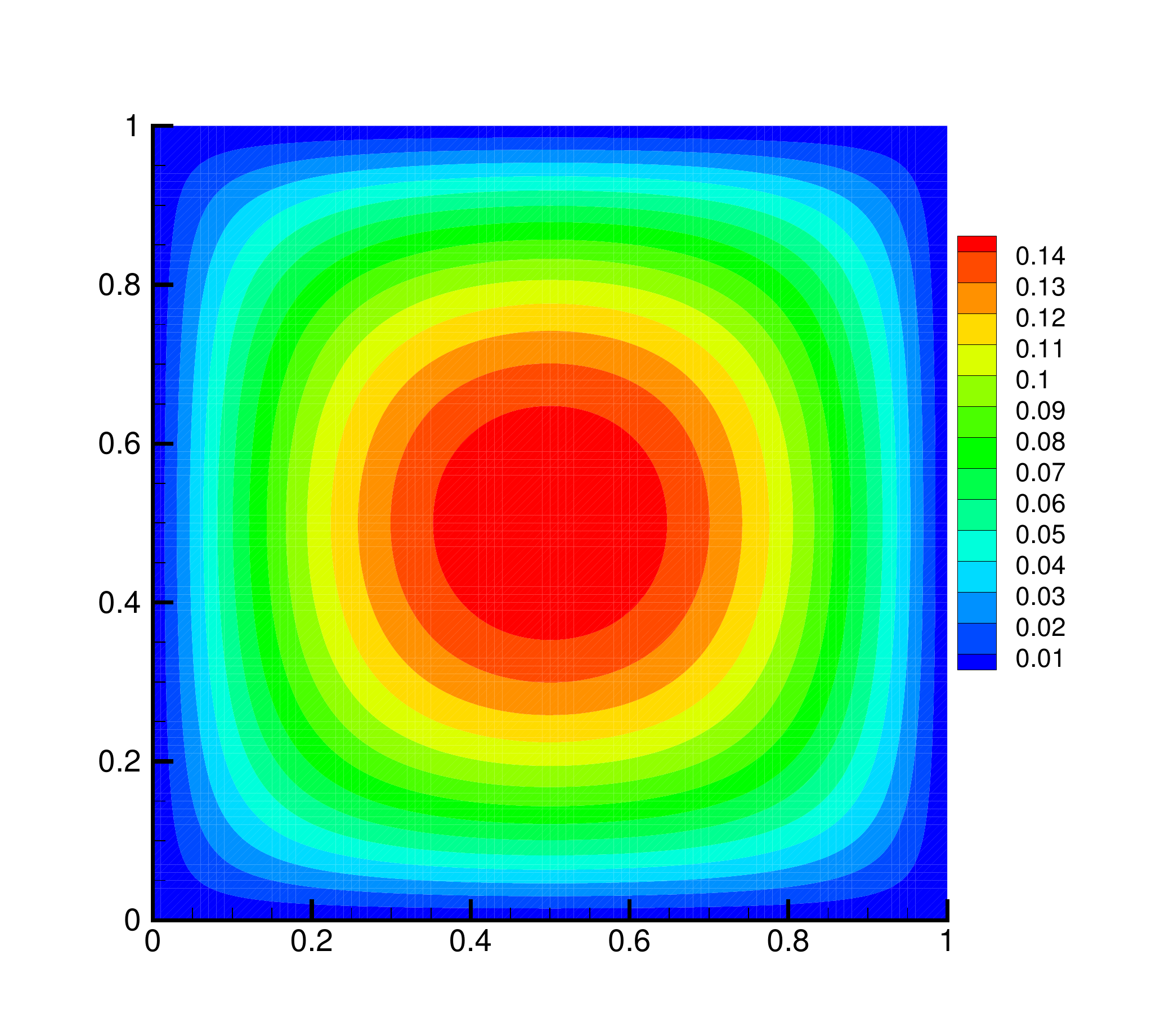}~
	{\tiny(b)}\includegraphics[width=0.47\textwidth]{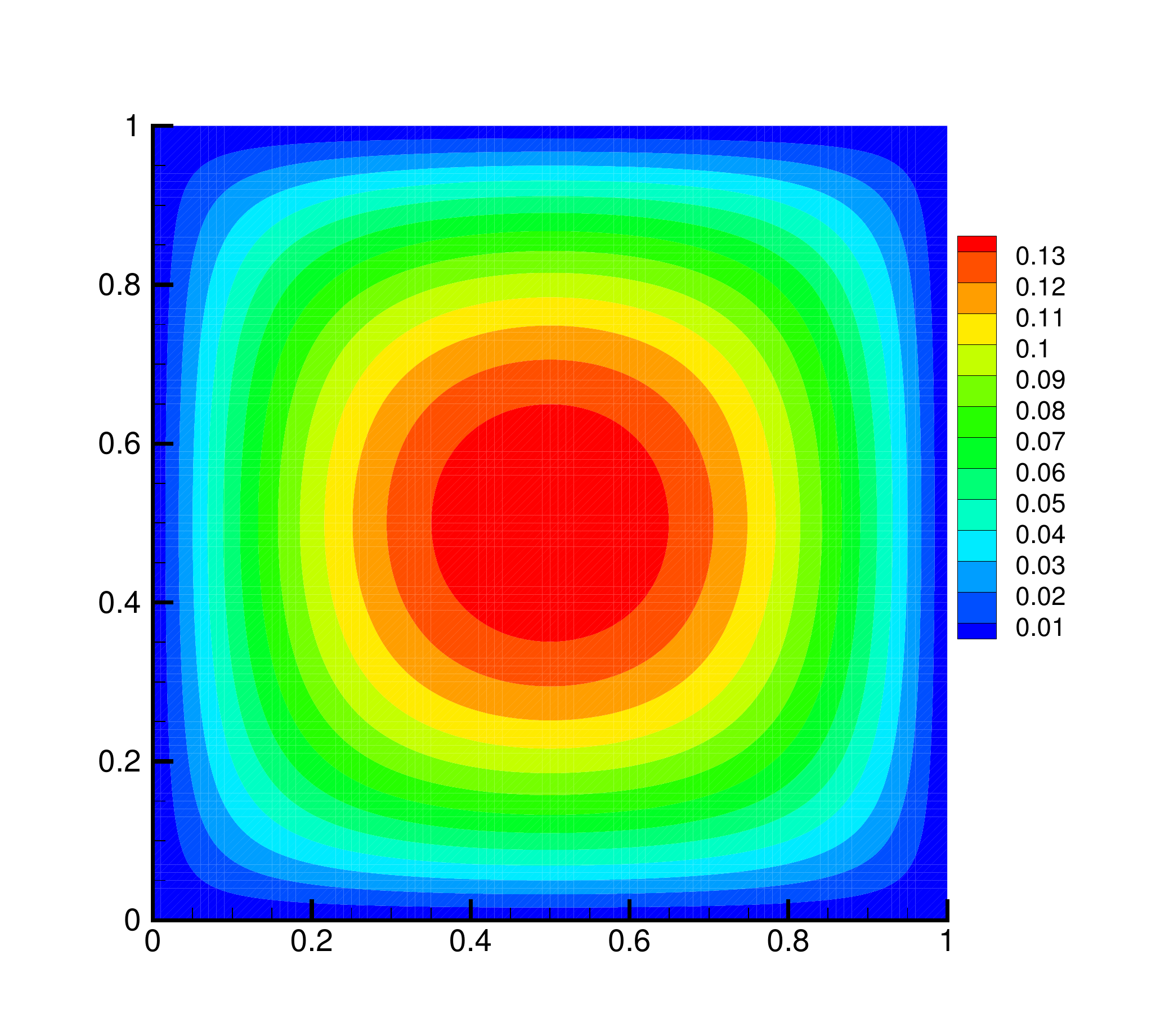}
	{\tiny(c)}\includegraphics[width=0.47\textwidth]{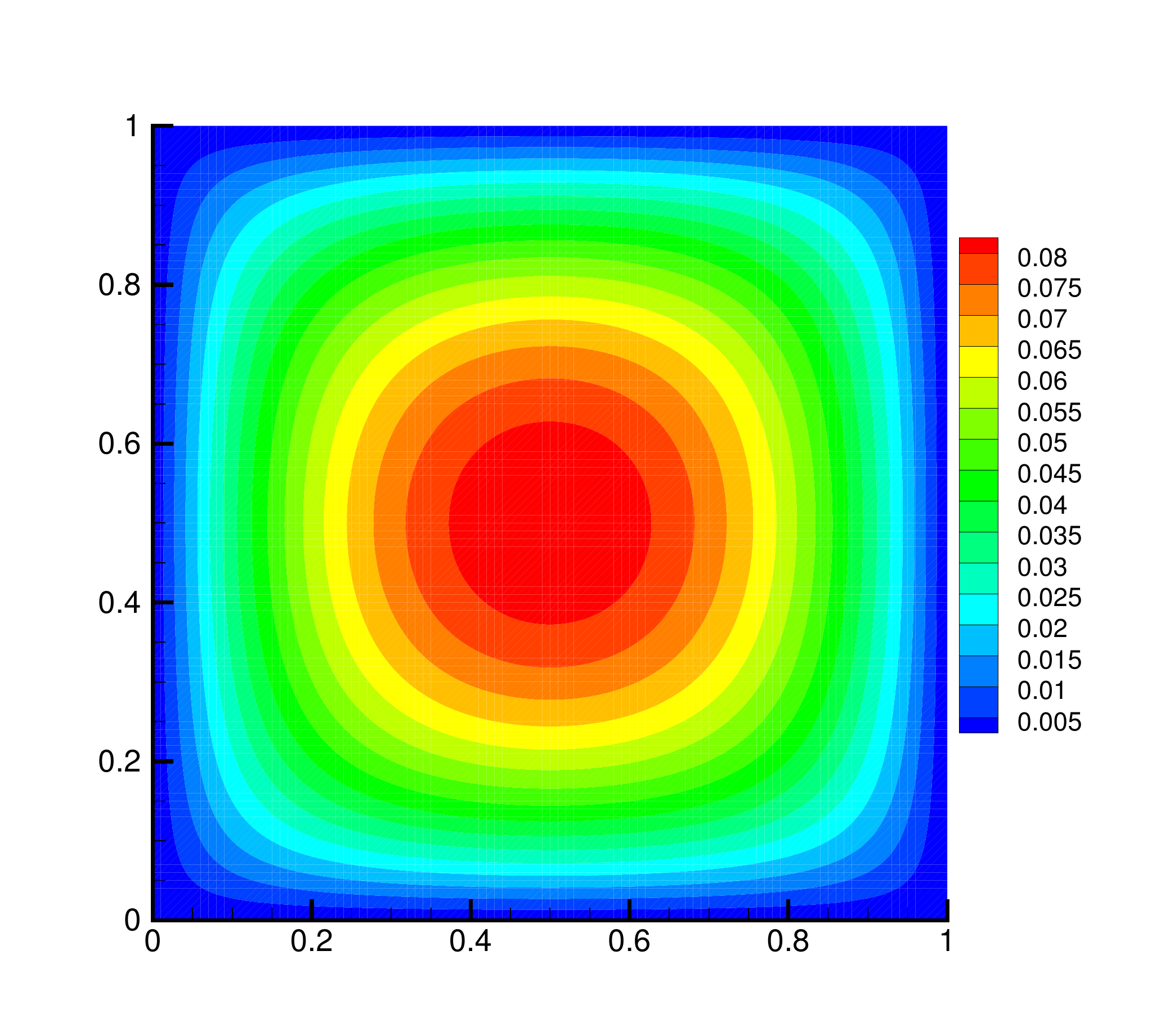}~
	{\tiny(d)}\includegraphics[width=0.47\textwidth]{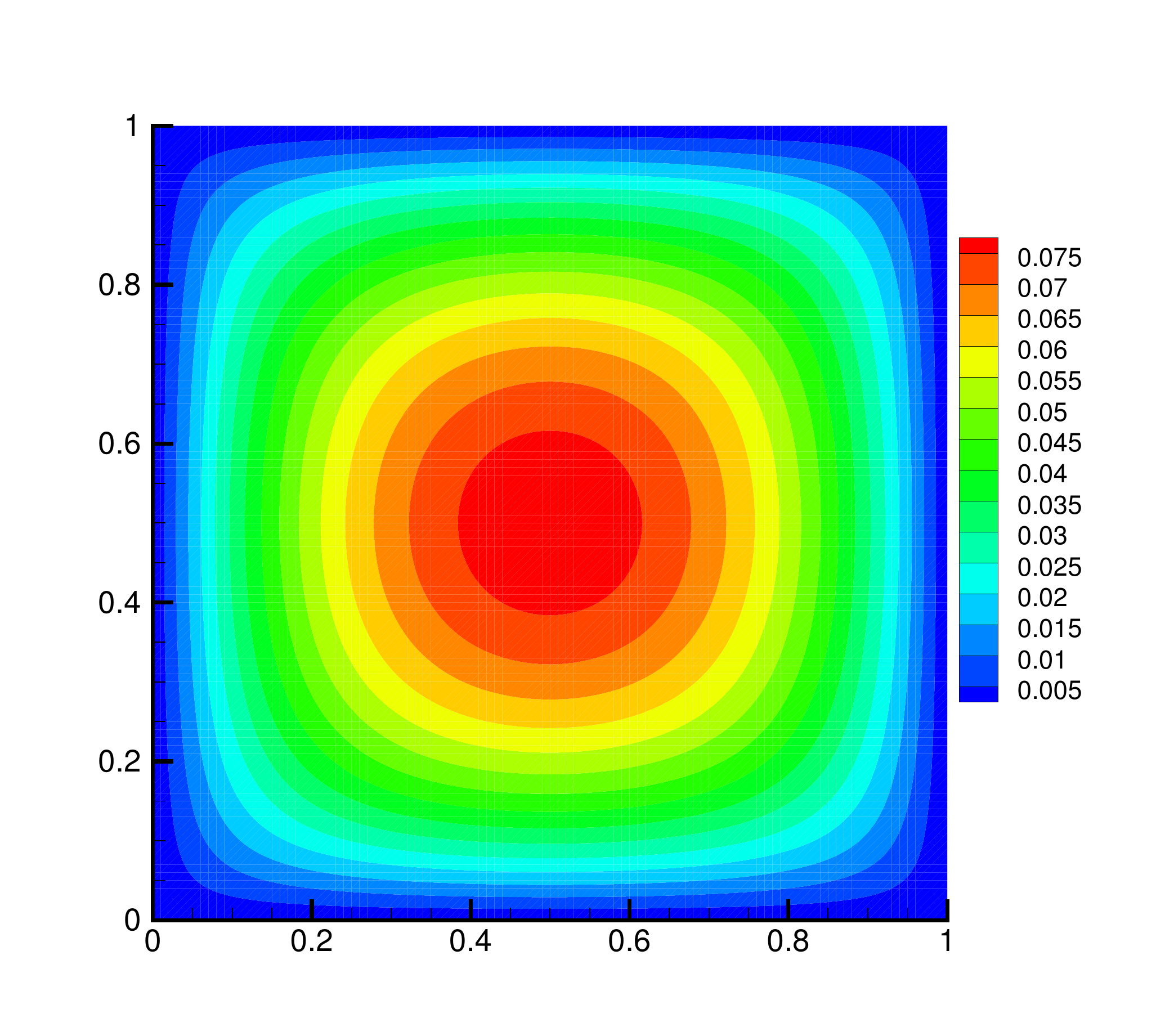}
	\caption{The contour plots of the solutions.
		(a) The two-stage stochastic homogenization solution $u_0^{0,h_0}$ for Type I of Test A with truncated normally distributed coefficients;
		(b) Reference solution $ \hat u^{L,h_1}  $ for Type I of Test A with standard normally distributed coefficients;
		(a) The two-stage stochastic homogenization solution $u_0^{0,h_0}$ for Type II of Test A with truncated normally distributed coefficients;
		(b) Reference solution $  \hat u^{L,h_1} $ for Type II of Test A with standard normally distributed coefficients.
	}\label{fig:example1-5}
\end{figure}

%\subsection{Example 2: random structure with constant parameters}
\paragraph*{Test B: Composite materials with  random structure and deterministic coefficients}
In this test, we consider composite  materials with random structure and deterministic coefficients.
As shown in Figure \ref{fig:example1-2}-(a), the computational domain $ {D}$ is decomposed into $8\times8$
cells and each cell contains 10 elliptical inclusions with uniform random distribution sample $\omega$, which is generated by the take-and-place algorithm \cite{wang1999mesoscopic}.
Denote $ {D}_1(\omega)$ as the matrix subdomain and ${D}_2(\omega)$ the inclusion subdomain. The deterministic coefficients in both subdomains are taken as
\begin{equation*}
a_{ij}(\frac{x}{\varepsilon},\omega)=\left\{
\begin{split}
&3\delta_{ij} &\quad x \in  {D}_1(\omega),\\
&300\delta_{ij} &\quad x \in  {D}_2(\omega).
\end{split}\right.
\end{equation*}

\begin{figure}[h]
	\centering
	{\tiny(a)}\includegraphics[width=0.45\textwidth]{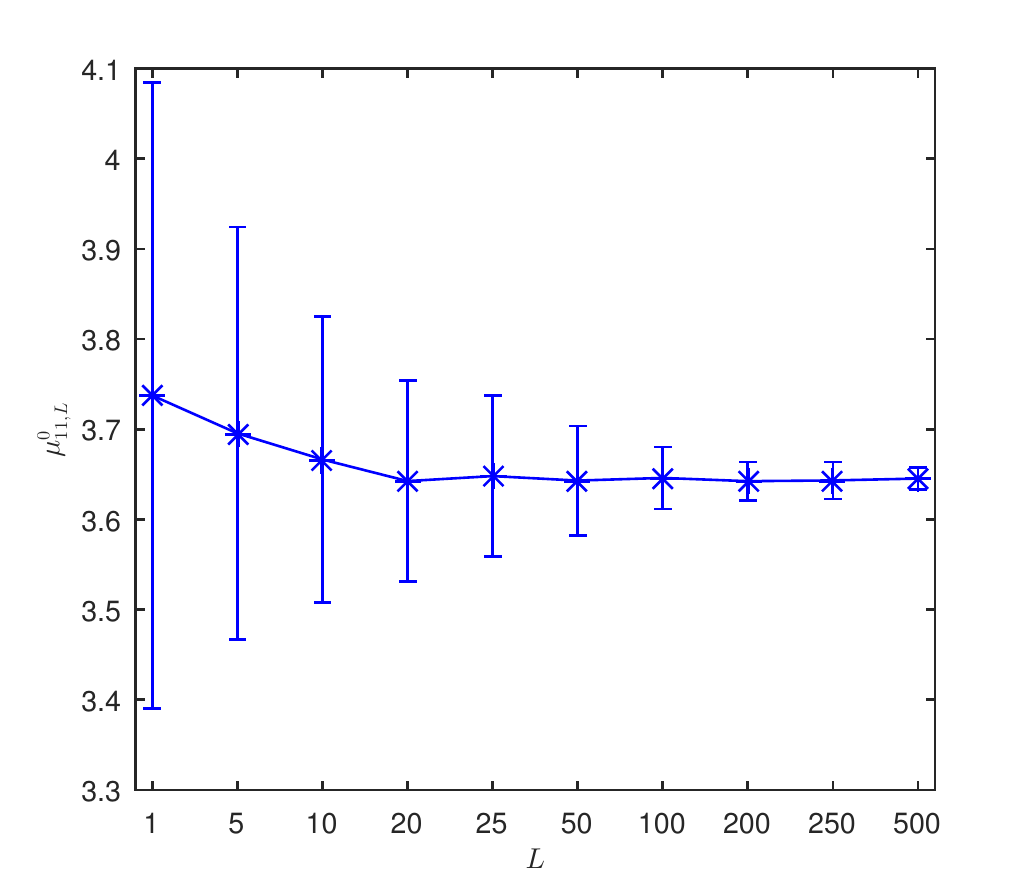}~
	{\tiny(b)}\includegraphics[width=0.45\textwidth]{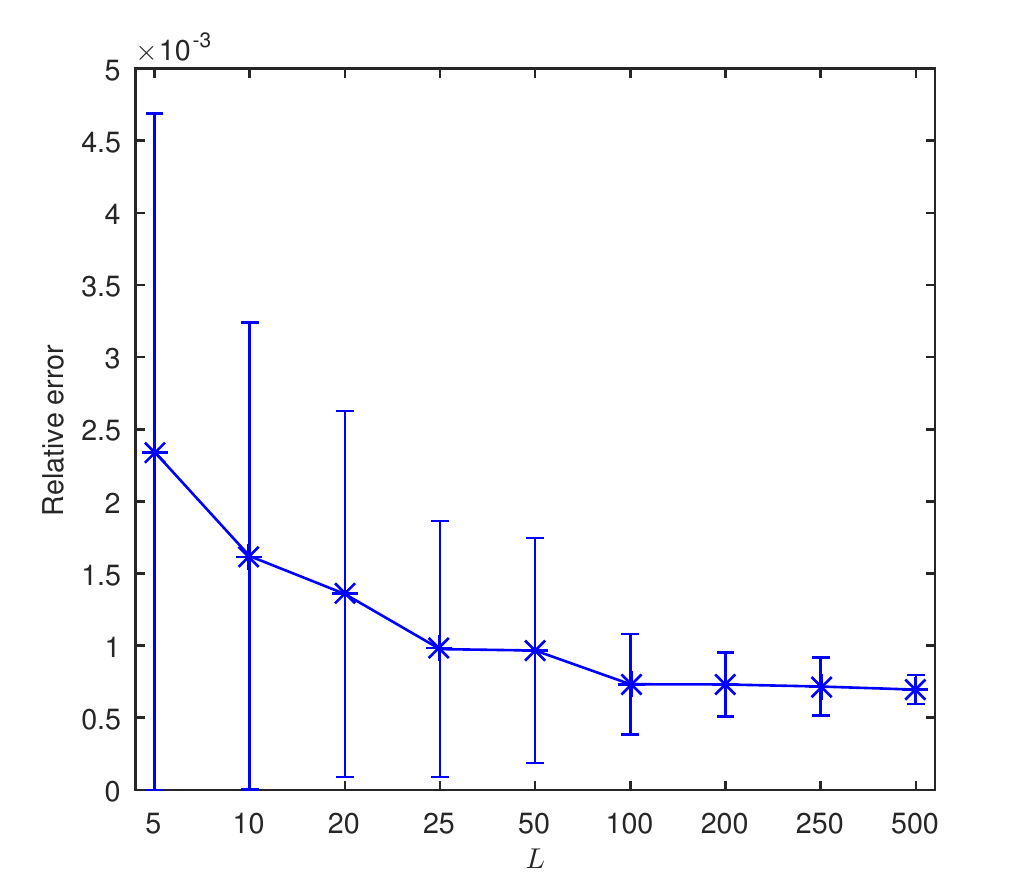}
	\caption{
		Test B: (a) the numerical expectation and  fluctuation of equivalent coefficient $\mu^0_{11,L}$;
		(b) the relative errors of the two-stage stochastic  homogenization method.
	}\label{fig:example2-1}
\end{figure}

%\subsection{Example 3: random structure with random parameters}
%\textbf{Case 3: random structure with random parameters.}
\paragraph*{Test C: Composite materials with random structure and random coefficients}
This test can be viewed as a combination of Test A and B.
As shown in Figure \ref{fig:example1-2}-(b), the computational domain $ {D}$ is decomposed into $8\times8$ cells and each cell contains
10 elliptical inclusions with uniform random distribution sample $\omega_1$, which is generated by the take-and-place algorithm \cite{wang1999mesoscopic}.
Let ${D}_1(\omega_1)$ and $ {D}_2(\omega_1)$ have the same definitions as in Test B.
The random coefficients in both subdomains are now defined as
\begin{equation*}
a_{ij}(\frac{x}{\varepsilon},\omega)=\left\{
\begin{split}
&3\delta_{ij}+(1+\sin(2\pi \frac{x_1}{\varepsilon})\sin(2\pi \frac{x_2}{\varepsilon})\delta_{ij})Z_{\mathbf k}(\omega_2) & x \in  {D}_1(\omega_1)\cap\varepsilon{(Q+\mathbf k)},\\
&300\delta_{ij}+(50+\sin(2\pi \frac{x_1}{\varepsilon})\sin(2\pi \frac{x_2}{\varepsilon})\delta_{ij})Z_{\mathbf k}(\omega_2) & x \in  {D}_2(\omega_1)\cap\varepsilon{(Q+\mathbf k)}.
\end{split}\right.
\end{equation*}
where the i.i.d. random variables $\big(Z_{\mathbf k}(\omega_2)\big)_{\mathbf k\in\mathbb {Z}^2}$ satisfy the uniform distribution over $[-1,1]$
or  the truncated normal distribution with the probability density function defined by
\eqref{eq:example-1-1} .

\begin{table}[!htbp]
	\caption{\label{tab1}The number of degrees of freedom (Dof) used in the three tests.}
	\footnotesize
	\begin{center}
		\begin{tabular}{ccc}
			\toprule
			& Test A (Type I) & Test A (Type II), Test B,C  \\
			\midrule
			Cell problem & 3,600 & 14,400  \\
			Homogenized problem   &  10,000 &  10,000\\
			
			\bottomrule
		\end{tabular}
	\end{center}
\end{table}

\begin{figure}[h]
	\centering
	{\tiny(a)}\includegraphics[width=0.45\textwidth]{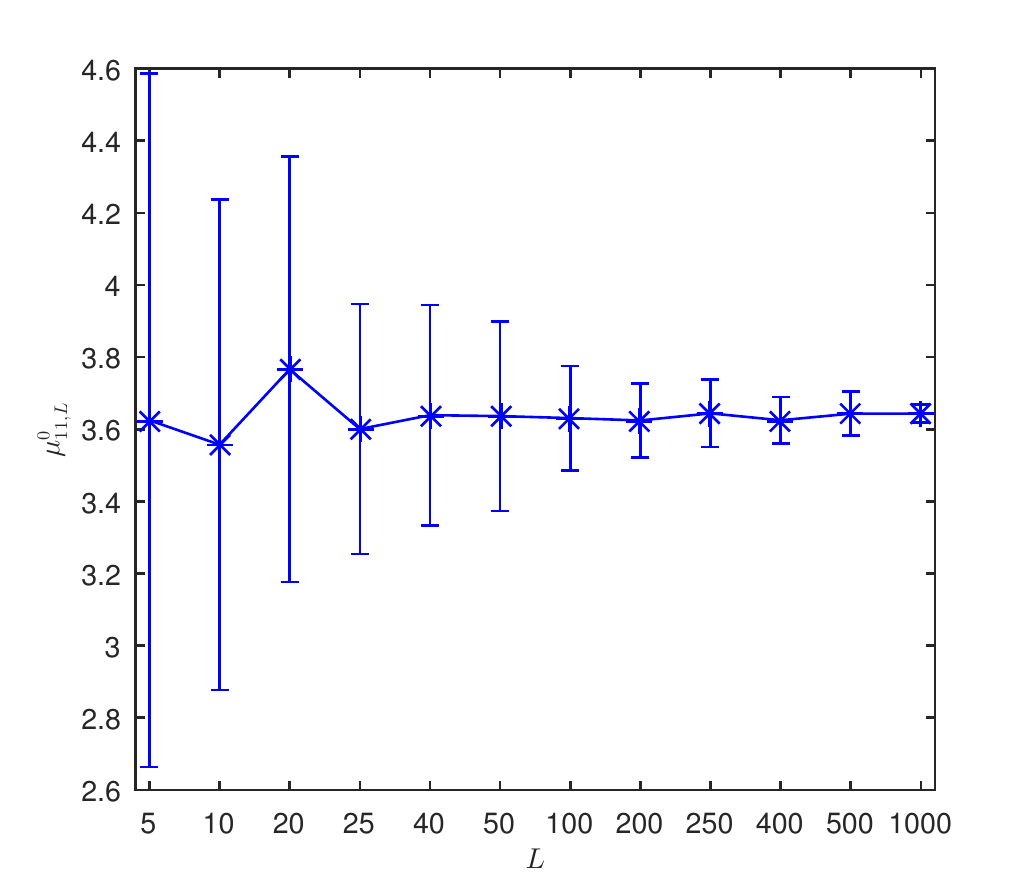}~
	{\tiny(b)}\includegraphics[width=0.45\textwidth]{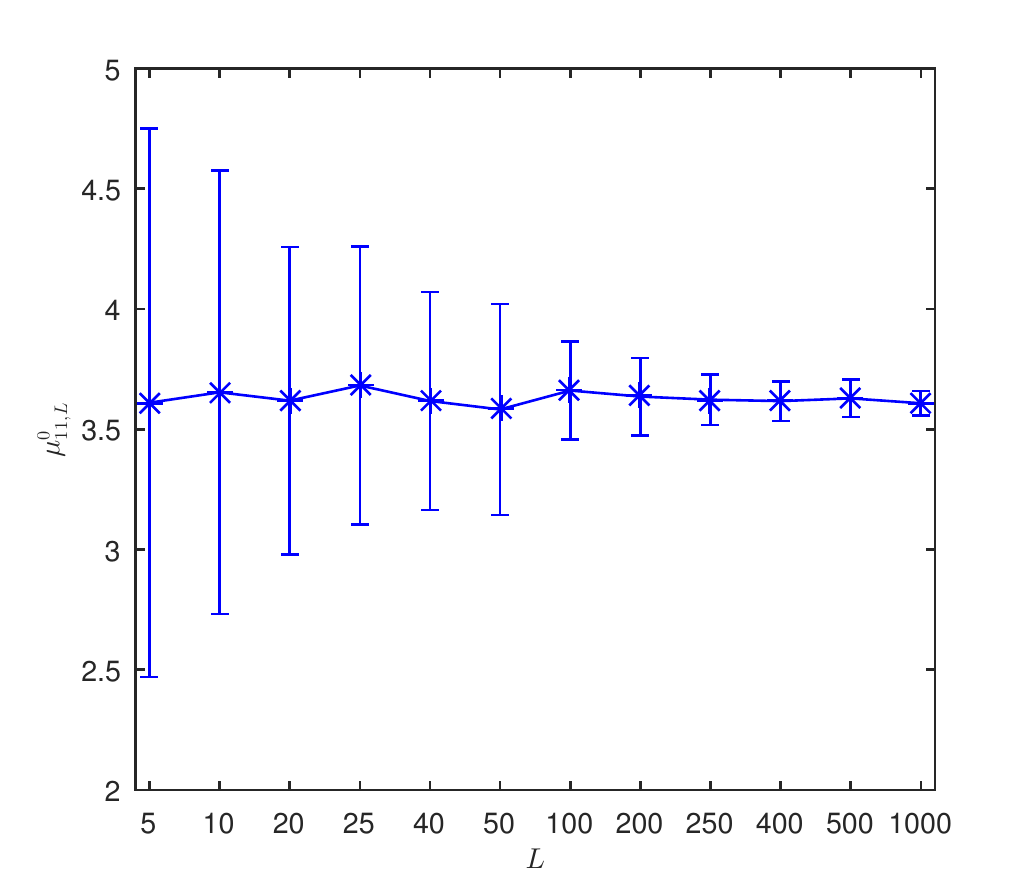}
	\caption{
		The numerical expectation and  fluctuation of equivalent coefficient $\mu^0_{11,L}$. (a)
		Test C with uniformly distributed coefficients;  (b) Test C with truncated normally distributed coefficients.
	}\label{fig:example3-1}
\end{figure}

\begin{figure}[h]
	\centering
	{\tiny(a)}\includegraphics[width=0.45\textwidth]{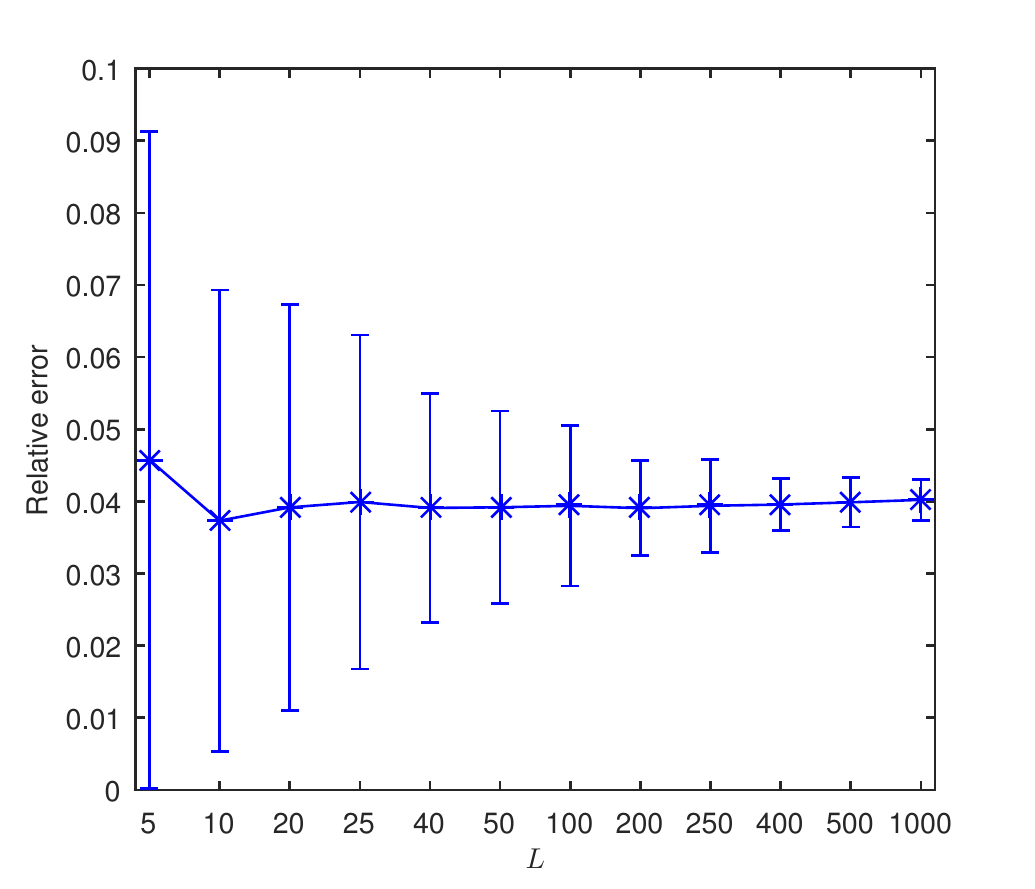}~
	{\tiny(b)}\includegraphics[width=0.45\textwidth]{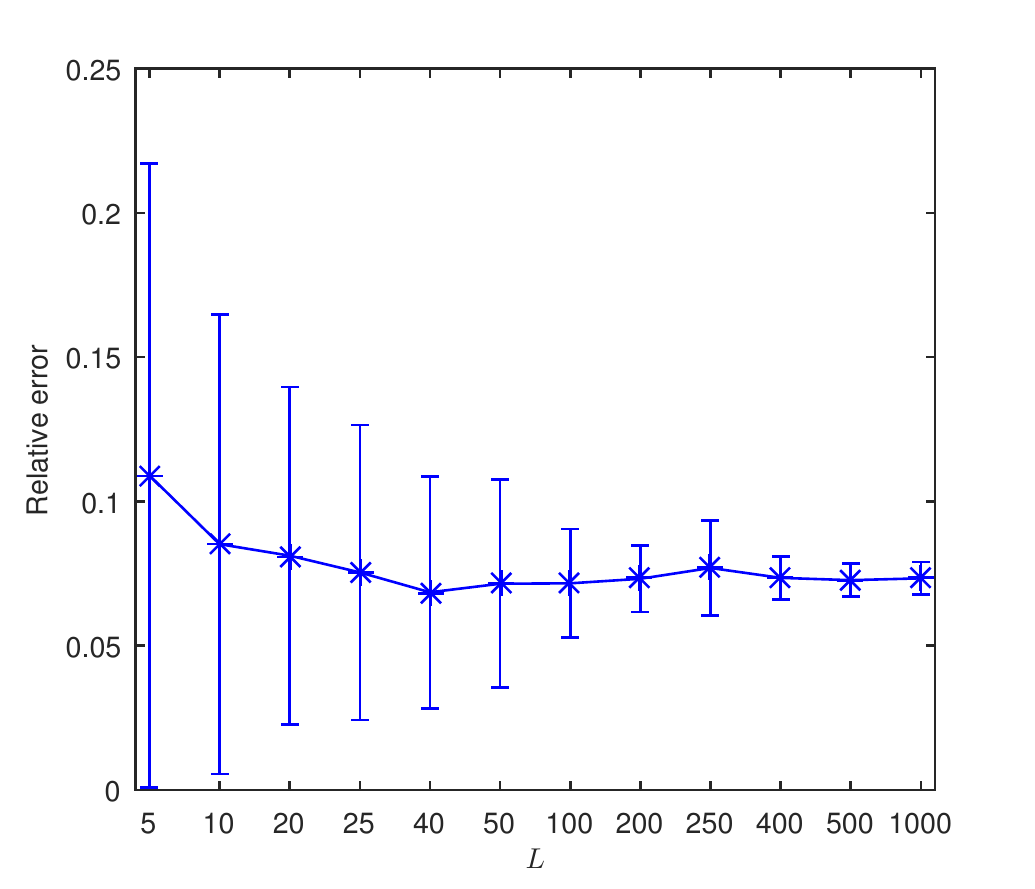}
	\caption{The relative errors of the two-stage stochastic homogenization method. (a)
		Test C with uniformly distributed coefficients;  (b) Test C with truncated normally distributed coefficients.
	}\label{fig:example3-2}
\end{figure}

\begin{table}[!htbp]
	\caption{\label{tab2}Total compute times for the three test cases by using
		the two-stage stochastic homogenization method (``Present") and the reference method (``Reference").}
	\footnotesize
	\begin{center}
		\begin{tabular}{ccccc}
			\toprule
			& Method & Total time(s) & Cell problem(s) & Homogenized problem(s)\\
			\midrule
			\multirow{2}{*}{Test A}   & Reference & 6,415.9 & 2,569.9 & 3,846.0\\
			& Present & 2,704.4 & 2,703.9 & 0.5\\
			\midrule
			\multirow{2}{*}{Test B}   & Reference & 10,115.5 & 6,258.3 & 3,857.2\\
			& Present & 6,337.3 & 6,334.4 & 2.9\\
			\midrule
			\multirow{2}{*}{Test C}   & Reference & 17,294.9  & 13,434.1 & 3,860.8\\
			& Present & 12,753.5 & 12,750.5 & 3.0\\
			\bottomrule
		\end{tabular}
	\end{center}
\end{table}

In each simulation, the unit cell $Q$ is partitioned into a body-fitted and  quasi-uniform mesh $\mathbb{T}^{\mathbf{k}}_h$. The mesh size $h=1/60$ is used for Type I of Test A  and $h=1/120$ for the other tests.
The equation \eqref{eq:new-2.29} is solved on a quasi-uniform partition
$\mathbb{T}_{h_0}$ of the domain  ${ D}$   with the mesh size $h_0=1/100$.
For comparison, we calculate the reference solution by solving \eqref{eq:new-2.29-1} on another quasi-uniform partition
$\mathbb{T}_{h_1}$ of the domain ${ D}$ with the mesh size $h_1=1/100$.
The relative error for the two-stage stochastic homogenization solution is defined as $\|u_0^{0,h_0}-  \hat u^{L,h_1}\|_{L^2( {D})}/\|  \hat u^{L,h_1}|_{L^2( {D})}$.
In Test A and Test C, the total number of samples  is taken as $10,000$.
Those samples are then divided into several subsets and they consist of respectively the following  numbers of samples $L=5,\ 10,\ 20,\ \cdots,\ 1,000$.
In Test B, the total number of samples is taken as $5,000$ and the numbers of
samples in the subsets are set as $L=5,\ 10,\ 20,\ \cdots,\ 500$.

The numerical expectations and fluctuations of the equivalent coefficients $\mu_{11,L}^0 $ for all three tests are presented in Figures
\ref{fig:example1-3}, \ref{fig:example2-1}, and \ref{fig:example3-1}. The numerical results, which are not shown, for the other entries of the equivalent matrix
are similar. We observe that as the number ($L$) of the total samples increases, the expectation tends to stabilize and the variance gradually decreases.
Moreover, we also observe that when $L=500\sim 1,000$ the two-stage stochastic homogenization method can obtain a stable and accurate equivalent matrix.
Figures \ref{fig:example1-4}, \ref{fig:example2-1}, and \ref{fig:example3-2}
show the relative errors of the computed two-stage stochastic homogenization solutions, and the numerical results show that the relative errors
reduce to a relatively low value (about 0.1\%-10\%) as the number of the total samples increases.
Figure \ref{fig:example1-5} displays the contour plots of the computed two-stage stochastic homogenization solutions and the reference solution. The consistency of
the two solutions is clearly seen.
Table \ref{tab1} presents the computational costs for solving the cell problem and the homogenized problem. Since the homogenized problem is only solved once in the two-stage stochastic homogenization method, the computational cost for the proposed method is much less than the other approach in which the homogenized problem must be solved $L$ times. The CPU times used by Test A, B, and C are respectively given in Table \ref{tab2}. Those numerical results demonstrate that the two-stage stochastic homogenization method is computationally quite efficient and accurate.

	%----------------------------------------------------%
	\section{Conclusion}\label{sec-con}
	\label{sec:conclusion}
	In this paper, we developed a two-stage stochastic homogenization method for solving  diffusion equations with random fast oscillation coefficients. In the first stage, the
	proposed method constructs an equivalent matrix by solving a cell problem posed
	on the finite cell ${\cal Q}_M$. It was proved that the equivalent matrix converges
	to the stochastic homogenized matrix as the cell size goes to infinity.
	To balance the efficiency and accuracy, the proposed two-stage stochastic
	homogenization method usually chooses a suitable large cell and calculates the empirical mean by taking $L$ samples in the probability space.
	In the second stage, the approximation of the homogenized problem, which is a random
	diffusion problem, is solved by employing an efficient multi-modes Monte Carlo method,
	after having shown that the equivalent matrix can be rewritten as a small random  perturbation of some deterministic matrix. As a result, the proposed two-stage method
	provides an efficient procedure to obtain an approximation to the homogenized solution.  The efficiency and accuracy of the two-stage stochastic homogenization method were validated by several numerical experiments on some benchmark problems.

{\color{black}
In summary, we propose a new stochastic homogenization method with a two-stage procedure, which
is different with the classical stochastic homogenization method or “cut-off” procedure.
This appears to be the first attempt to separate the computational difficulty caused
by the spatial fast oscillation of the solution and that caused by the randomness of the
solution, so they can be overcome separately using different strategies. Besides, the convergence of the solution of the spatially homogenized equation (from the first
stage) to the solution of the original random diffusion equation is established and the
optimal rate of convergence is also obtained for the proposed multi-modes Monte Carlo
method. This appears to be the first attempt to obtain the explicit convergence order
for the stochastic homogenization method.
}

%----------------------------------------------------%
\renewcommand\theequation{A.\arabic{equation}}
\setcounter{equation}{0}
{
\section*{Appendix A: The property of random matrix $A(\frac x \varepsilon,\omega)$}
The random matrix $A(\frac x \varepsilon,\omega)$ of the  composite materials usually does not satisfy the small random perturbation or can not be rewritten into the desired form by the Karhunen-Lo\'{e}ve expansion. The precise statement is supported by the following example.
Thus, the MMC finite element method of \cite{feng2016multimodes} can not be applied directly to solving the random diffusion problem \eqref{eq:problem}.

\vspace{0.25cm}
\noindent\textbf{Example 1:}
Let $Q=Q_1\cup Q_2$ with $Q_1\cap Q_2=\emptyset$.  Assume $ {D}_1=\cup_{\mathbf{k}\in\mathbb{Z}^d} {D}\cap
\varepsilon(Q_1+\mathbf{k})$ and $ {D}_2=\cup_{\mathbf{k}\in\mathbb{Z}^d} {D}\cap \varepsilon(Q_2+\mathbf{k})$.
The entries of the random matrix $A(\frac x \varepsilon,\omega)$  are taken as
\begin{equation}\label{eq:hetercoeff11}
a_{ij} \Bigl(\frac x \varepsilon,\omega \Bigr) = \left\{ \begin{gathered}
a_{ij,1}(1+\omega )\quad x \in { {D} _1}, \hfill \\
a_{ij,2}(1+\omega )\quad x \in { {D}_2}. \hfill \\
\end{gathered}  \right.
\end{equation}
where $a_{ij,1}$ and $a_{ij,2}$ are two distinct constants and $\omega$ is a   uniformly distributed scalar random variable over $[0,1]$. The autocorrelation function of $a_{ij} (\frac x \varepsilon,\omega )$ is given by
\begin{equation}\label{eq:hetercoeff12}
\begin{gathered}
{\hbox{Cov}_a}(s,t) = \mathbb{E}\left[\left(a_{ij} \Bigl(\frac s \varepsilon,\omega \Bigr)- \mathbb{E}\left[a_{ij} (\frac s \varepsilon,\omega )\right]\right)
\left(a_{ij} \Bigl(\frac t \varepsilon,\omega \Bigr)- \mathbb{E}\left[a_{ij} \Bigl(\frac t \varepsilon,\omega \Bigr)\right]\right)\right], %({a_{ij} (\frac s \varepsilon,\omega ))- %\mathbb{E}a_{ij} (\frac s \varepsilon,\omega ))
\\
\qquad\qquad= \left\{ \begin{gathered}
\frac 1 {12} a_{ij,1}a_{ij,2}\quad \quad \hbox{for}~s \in { {D}_1},~t \in { {D} _2},~\hbox{or}~s \in { {D}_2},~t \in { {D} _1},\hfill \\
\frac 1 {12} a_{ij,1}^2\quad \quad \hbox{for}~s \in {{D}_1},~t \in {{D} _1}, \hfill \\
\frac 1 {12}a_{ij,2}^2\quad \quad \hbox{for}~s \in {{D}_2},~t \in {{D} _2}. \hfill \\
\end{gathered}  \right.
\end{gathered}
\end{equation}
Therefore, ${\hbox{Cov}_a}(s,t)$ is a discontinuous function.
By Lemma 4.2 of  \cite{alexanderian2015brief}, the Karhunen-Lo\'{e}ve expansion does not give the required small random perturbation form for random matrix $A (\frac x \varepsilon,\omega )$.

Moreover, suppose that the $(i,j)$-component of $A (\frac x \varepsilon,\omega )$ can be
rewritten as
$a_{ij}(\frac x \varepsilon,\omega )= a_{ij}^0(x)+\delta a^1_{ij} (\frac x \varepsilon,\omega )$
with $a^1_{ij} \in L^2(\Omega,L^{\infty}({D}))$ satisfying
\begin{equation*}
\mathbb{P}\bigl\{\omega\in\Omega; \left\| a_{ij}^1(\omega)\right\|_{L^{\infty}({D})}\leq \bar{a}\bigr\}=1,
\end{equation*}
where $\bar a$ is a constant independent of small parameter $\delta$.
Thus, we have
$$
 a^1_{ij} \Bigl(\frac x \varepsilon,\omega \Bigr)=\frac {a_{ij}\bigl(\frac x \varepsilon,\omega \bigr)- a_{ij}^0(x)} \delta,
$$
Let $\omega_1\in \Omega_1=[0,\frac14]$ and $\omega_2\in\Omega_2=[\frac34,1]$, it follows that
\begin{align}
\Bigl\| a^1_{ij} \Bigl(\frac x \varepsilon,\omega _1\Bigr) \Bigr\|_{L^{\infty}({D})}
&+\Bigl\| a^1_{ij} \Bigl(\frac x \varepsilon,\omega _2 \Bigr) \Bigr\|_{L^{\infty}({D})},\\
&\geq \frac {\|{a_{ij}(\frac x \varepsilon,\omega_1 )- a_{ij}(\frac x \varepsilon,\omega_2 )\|_{L^{\infty}({D})}}} \delta,\\
&\geq \frac 1 {2\delta}\max\{a_{ij,1},\,a_{ij,2}\}.
\end{align}
which contradicts with the condition $\mathbb{P}\bigl\{\omega\in\Omega; \left\| a_{ij}^1(\omega)\right\|_{L^{\infty}({D})}\leq \bar{a}\bigr\}=1$
with $\bar a$ being independent of small parameter $\delta$.
Therefore,
the random matrix $A (\frac x \varepsilon,\omega )$ can not be rewritten as
$A_{ij}(\frac x \varepsilon,\omega )= A_{ij}^0(x)+\delta A^1_{ij} (\frac x \varepsilon,\omega )$.

}

%\section*{Acknowledgments}
%We would like to acknowledge the assistance of volunteers in putting
%together this example manuscript and supplement.

\bibliographystyle{abbrv}
%\bibliographystyle{siamplain}
%\bibliography{references}

\end{document}